\newcommand{\proofend}{\hfill $\Box$ \vspace{2mm}}
\newtheorem{theorem}{Theorem}
\newtheorem{assum}{Assumption}
\newtheorem{lemma}{Lemma}
\newtheorem{definition}{Definition}
\newcommand{\R}{\mathbb{R}}
\newcommand{\C}{\mathbb{C}}
\newcommand{\re}{\mbox{Re}}
\newcommand{\im}{\mbox{Im}}
\newcommand{\vecB}{{\bf B}}
\newcommand{\vecE}{{\bf E}}
\newcommand{\vecf}{{\bf f}}
\newcommand{\vecg}{{\bf g}}
\newcommand{\vecR}{{\bf R}}
\newcommand{\vecL}{{\bf L}}
\newcommand{\vecH}{{\bf H}}
\newcommand{\scH}[1]{\overline{\bf H}^{#1}_{sc}} 
\newcommand{\vecu}{{\bf u}}
\newcommand{\uvecu}{\underline{\bf u}}
\newcommand{\uvecv}{\underline{\bf v}}
\newcommand{\vecv}{  {\bf v}  }
\newcommand{\vecp}{  {\bf p}  }
\newcommand{\vecq}{  {\bf q}  }
\newcommand{\vecS}{{\bf S}}
\newcommand{\vecU}{{\bf U}}
\newcommand{\vecV}{{\bf V}}
\newcommand{\vecF}{{\bf F}}
\newcommand{\vecG}{{\bf G}}
\newcommand{\vecP}{{\bf P}}
\newcommand{\vecJ}{{\bf J}}
\newcommand{\mdiv}{{ \rm div\,}}
\newcommand{\mcurl}{{\rm curl \,}}
\newcommand{\mop}{\mbox{op}}
\newcommand{\mOp}{\mbox{Op}}
\newenvironment{abstracts}
 {\global\setbox\absbox=\vbox\bgroup
    \hsize=\textwidth
    \linespread{1}\selectfont}
 {\vspace{-\bigskipamount}\egroup}
\renewenvironment{abstract}[1][]
 {\if\relax\detokenize{#1}\relax\else\selectlanguage{#1}\fi
  \noindent\textbf{\abstractname}\par\medskip\noindent\ignorespaces}
 {\par\bigskip}
\begin{document}

\begin{frontmatter}

\title{The Spectral Analysis of the Interior Transmission Eigenvalue Problem for Maxwell's Equations}

\author
{
Houssem Haddar, Shixu Meng\fnref{author2}\corref{cor1}
}

\fntext[author2]{Current address (when article was published): Department of Mathematics,
University of Michigan, Ann Arbor 48109, USA}
\cortext[cor1]{Corresponding author}

\address
{
INRIA and Ecole Polytechnique (CMAP),  Route de Saclay, 91128 Palaiseau Cedex, France
}

\ead{Houssem.Haddar@inria.fr, shixumen@umich.edu}

\begin{abstracts}
\selectlanguage{english}
\begin{abstract}
In this paper we consider the transmission eigenvalue problem  for Maxwell's
equations corresponding to non-magnetic inhomogeneities with contrast in
electric permittivity that has fixed sign (only) in a neighborhood of the boundary.  Following the
analysis made by Robbiano in the scalar case we study this problem in the
framework of semiclassical analysis and relate the transmission eigenvalues to
the spectrum of a Hilbert-Schmidt operator.  Under the additional assumption
that the contrast is constant in a neighborhood of the boundary, we prove that
the set of transmission eigenvalues is discrete, infinite and without finite accumulation
points. A notion of generalized
eigenfunctions is introduced and a denseness result is obtained in an
appropriate solution space. 
\end{abstract} 

\selectlanguage{french}
\begin{abstract}
Nous consid\'erons le probl\`eme de valeurs propres de transmission pour les \'equations de Maxwell o\`u le contraste sur la permittivit\'e \'electrique admet un signe fixe seulement sur un voisinage du bord du domaine. En suivant l'approche d\'evelopp\'ee par Robbiano dans le cas scalaire, nous formulons le probl\`eme aux valeurs propres comme un probl\`eme spectral pour un op\'erateur d'Hilbert-Schmidt. Sous l'hypoth\`ese suppl\'ementaire que le contraste est constant au voisinage de la fronti\`ere nous montrons que l'ensemble des valeurs propres de transmission est discret sans points d'accumulation finis. Nous d\'efinissons une famille de fonctions propres g\'en\'eralis\'ees pour laquelle un r\'esultat de densit\'e est obtenu dans un espace de solutions bien choisi.
\end{abstract} 
\end{abstracts}
\selectlanguage{english}

\begin{keyword}
Transmission eigenvalues, inverse scattering, semiclassical analysis, Hilbert-Schmidt operator, Maxwell's equations.
\MSC[2010]78A46, 47A75, 35Q61, 81Q20
\end{keyword}

\end{frontmatter}
%%%%%%%%%%%%%%%%%%%%%%%%%%%%%%%%%%%%%%%%%%%%%%%%%%%%%%%%%%%%%
\section{Introduction}  
The transmission eigenvalue problem is  related to the scattering problem for an inhomogeneous media. In the current paper the underlying scattering problem is the scattering of electromagnetic waves by a non-magnetic material of bounded support $D$ situated in homogenous background, which in terms of the electric field reads: 
\begin{eqnarray*}
&\mcurl \mcurl \vecE^s-k^2 \vecE^s = 0 \quad &\mbox{in} \quad {\mathbb R}^3\setminus\overline{D}  \\
&\mcurl \mcurl \vecE-k^2 n \vecE = 0 \quad &\mbox{in} \quad D  \\
&\nu \times \vecE = \nu \times \vecE^s+\nu \times \vecE^i \quad &\mbox{on} \quad \partial D\\ 
&\nu \times \mcurl \vecE = \nu \times \mcurl \vecE^s+\nu \times \mcurl \vecE^i \quad &\mbox{on} \quad \partial D \\
&\lim\limits_{r\to \infty}\left(\mcurl \vecE^s\times x-ikr\vecE^s\right)=0 &
\end{eqnarray*}
where $\vecE^i$ is the incident electric field, $\vecE^s$ is the scattered
electric field,
$n(x)$
is the index of refraction, $k$ is the wave number
 and the
Silver-M{\"u}ller radiation condition is satisfied uniformly with respect to
$\hat x=x/r$, $r=|x|$.  The difference  $n-1$  is refereed to as the contrast in the media. In scattering theory, transmission eigenvalues can be seen as the extension of the notion of resonant frequencies for impenetrable objects to the case of penetrable media. The transmission eigenvalue problem is related to non-scattering incident fields \cite{CaCo, CaCoMo, CK}. Indeed, if $\vecE^i$ is such that $\vecE^s=0$ then $\vecE|_{D}$ and $\vecE_0=\vecE^i|_{D}$ satisfy the following homogenous problem
\begin{eqnarray}
&\mcurl \mcurl \vecE-k^2 n \vecE = 0 \quad &\mbox{in} \quad D  \label{IntroE} \\
&\mcurl \mcurl \vecE_0-k^2 \vecE_0 = 0 \quad &\mbox{in} \quad D \label{IntroE0} \\
&\nu \times \vecE = \nu \times \vecE_0 \quad &\mbox{on} \quad \Gamma \label{IntroDirichlet} \\ 
&\nu \times \mcurl \vecE = \nu \times \mcurl \vecE_0 \quad &\mbox{on} \quad \Gamma  \label{IntroNeumann}
\end{eqnarray}
with $\Gamma:=\partial D$ and $\nu$ the inward unit normal vector on $\Gamma$, which is referred to as the transmission eigenvalue problem. If the above
problem has a non trivial solution then $k$ is called a transmission eigenvalue. Conversely, if the
above equations have a nontrivial solution $\vecE$ and $\vecE_0$, and
$\vecE_0$ can be extended outside $D$ as a solution to $\mcurl \mcurl
\vecE_0-k^2 \vecE_0 = 0 $ in $\R^3$, then if this  extended  $\vecE_0$  is
considered as the incident field the corresponding scattered field is
$\vecE^s=0$. In this case, the associated transmission eigenvalues are referred
to as non scattering frequencies. Let us mention that the latter notion is much
more restrictive and it is for instance proven that non scattering frequencies
do not exist in special cases of geometries \cite{BPP2014}. The notion of transmission
eigenvalues is relevant to inverse (spectral) problems as it is shown that
these frequencies can be determined from time-dependent measurements of
scattered waves \cite{cchlsm, LR2015}.

\medskip

The transmission eigenvalue problem is a non-selfadjoint eigenvalue problem
that  is not covered by the standard theory of eigenvalue problems for
elliptic equations. For an introduction we refer to the survey paper
\cite{CaHa2} and the Special Issue of Inverse Problems on  Transmission
Eigenvalues, Volume 29, Number 10, October 2013 \cite{CaHa3}. The discreteness
and existence of real transmission eigenvalues is well understood under the
assumption that the contrast does not change sign in all of $D$
\cite{CaGHa}. Recently, regarding the transmission eigenvalue problem for the
Helmholtz equation, several papers have appeared that address both the question
of discreteness and existence of transmission eigenvalues in the complex plane assuming that the  contrast is  of one sign only in a neighborhood of the  boundary $\partial D$ \cite{CoH, LV1, LV2, R1, S}. The Weyl asymptotic and distribution of transmission eigenvalues are studied in \cite{DP, PV, R2, V}. 

\medskip

The picture is not the same for the transmission eigenvalue problem for the
 Maxwell's equations. The transmission eigenvalues for Maxwell's equations is
 important in application \cite{H}. Some results in this direction are the
 proof of discreteness of transmission eigenvalues in \cite{CaHaMe, Ch} where the magnetic and electric permittivity doesn't change sign
 near the boundary. It is known \cite{CaGHa, CaHa1, CoHa, K}  that, if  $\re(n-I)$ has one sign in $D$ the transmission
 eigenvalues form at most a discrete set without finite accumulation point, and if in addition $\im(n)=0$, there exists an infinite
 set of real transmission eigenvalues. The existence of transmission
 eigenvalues for Maxwell's equations for which the electric permittivity
 changes sign is an open problem. It is our concern to study the existence of
 transmission eigenvalues in the complex plane under the assumption that the electric permittivity
 is constant near the boundary.  Although the index of refraction may be a
 complex valued function, our analysis does not cover the case with absorption
 where the imaginary part of $n$ is proportional to $1/k$. For the case with
 absorption, some non-linear eigenvalue techniques would be more relevant
 \cite{CHL, HiKrOlPa2, Robert}. We also remark that, similarly to the scalar
 case in \cite{R1}, our analysis does not yield information on the existence of
 real transmission eigenvalues. 

\medskip

Now we give an outline of this article with main results. 

\medskip

In section \ref{Formulation} we give an appropriate formulation of the transmission eigenvalue problem and relate transmission eigenvalues to
the eigenvalues of an unbounded linear operator $\vecB_\lambda$. 

\medskip

This motivates us to derive desired regularity results in Section \ref{Regularity} that are needed to show the invertibility of $\vecB_\lambda$ and prove the main theorem. The derivation of these results mainly uses the semi-classical pseudo-differential calculus introduced in \cite{R1} for the scalar case with
appropriate adaptations to Maxwell's system. The assumption  that the electric permittivity
 is constant near the boundary considerably eases the technicality of this
 section and allows us to use results from the scalar problem that are summarized in the Appendix.
{The main technical difficulty related to non constant electric permittivity is
that the divergence free condition is different  for $\vecE$ and $\vecE_0$ near
the boundary. One therefore cannot impose a ``simple'' control of the
divergence of the difference which is needed to establish regularity results.}
\medskip

Using the regularity results obtained in Section \ref{Regularity}, we show that $\vecB_\lambda$ has a bounded
inverse for certain $\lambda$ in Section \ref{InverseBz}. 

\medskip

Section \ref{Main} is dedicated to proving the main
results on transmission eigenvalues following the approach in \cite{R1} which is
based on Agmon's theory for the spectrum of non self-adjoint PDE \cite{A}. We prove for instance
that the inverse $\vecB_\lambda^{-1}$ composed
with a projection operator is a Hilbert-Schmidt operator with desired growth
properties for its resolvent. This allows us to prove that
the set of transmission eigenvalues is discrete, infinite and without finite accumulation
points. Moreover, a notion of generalized
eigenfunctions is introduced and a denseness result is obtained in an
appropriate solution space. The main result is summerized in Theorem \ref{mainHM}.

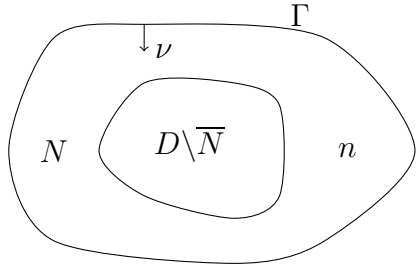
\begin{figure}[hb!] \label{ITEMaxwellFigure1}
        \centering
            \begin{tikzpicture}[scale=0.6]
\draw     plot [smooth cycle] coordinates          { (-3,0)  (-2,-2) (2,-2.5)(4,-2) (6,0) (4,2.5) (0,2.8) (-2,2.5)};
\draw     plot [smooth cycle] coordinates          { (-1,0)  (0,-1) (2,-1.5) (3,-1) (3,1) (2,1.5) (0,1.5) };
              \draw (3,3) node[right]{$\Gamma$};      
              \draw  (-2,-0.5)  node[above, black]{$N$};
              \draw  (1,-0.5) node[above, black]{$D \backslash \overline{N}$};
\draw[->] (0,2.8) -- (0,2.2) node[right]{$\nu$};
\draw (4.5, 0) node[] {$n$};
            \end{tikzpicture} 
\caption{Example of the geometry of the problem}

    \end{figure}
Throughout this article we denote $m:=n-1$ and shall make the following assumption on the index of
refraction $n$.
\begin{assum} \label{Assumptionmconstant}
We assume that the complex valued
  function $n\in C^\infty(\overline D)$ and that $\Re(n) >0 $ in $D$. Moreover
  we assume the existence of a neighborhood $N$ of $\Gamma$ such that $n$ is
  constant in $N$ and that this constant is different from $1$ (which means that
  $m$ is constant and different from zero in $N$). 
\end{assum}
For a complex number $z = |z| e^{i\theta}$,
$\theta \in [0, 2\pi[$ we
define $\arg z := \theta$ and
\begin{eqnarray*}
C(m):=\{ \arg \frac{1}{n(x)} ; \; x \in \overline D\} .
\end{eqnarray*}
For best readability we state the main result of this paper here and refer the readers to Section \ref{Formulation} on the definition of $\vecU(D)$ and $\vecV(D)$, to Section \ref{InverseBz} and Section \ref{Main} on the definition of $\vecS_z$.
\begin{theorem}\label{mainHM}
Assume that Assumption 1 holds and assume that $C(m)$ is contained in an interval of
length $< \frac{\pi}{4}$. Then
there exist infinitely many transmission eigenvalues in the complex plane and
they form a discrete set $\mathcal{T}$ without finite accumulation points. 
 Moreover, there exists $z \in \mathbb{C}$ such that the set
 $\{\mu = (k^2-z)^{-1}, \; k \in \mathcal{T}
 \}$ form the set of eigenvalues of the operator $\vecS_z $ and the associated
 eigenvectors are dense in  $\{\vecu \in \vecU(D); \mdiv \left( \left(1+m\right)\vecu\right)=0\} \times \{\vecv \in \vecV(D); \mdiv  \vecv =0\}$.
\end{theorem}
%Here we define the following spaces:
%$$
%\vecU(D) := \left\{\vecu\in \vecH_0(\mcurl\!^2, D); \mdiv \vecu \in H^1(D) \right\}
%$$
%and
%$$
%\vecV(D):=\left\{\vecv\in \vecL^2(D); \mcurl \mcurl \vecv \in \vecL^2(D) \; \mbox{and} \; \mdiv \vecv \in H^1(D) \right\}.
%$$
%%%%%%%%%%%%%%%%%%%%%%%%%%%%%%%%%%%%%%%%%%%%%%%%%%%%%%%%%%%%%
\section{Formulation of the transmission eigenvalue problem} \label{Formulation}
In the following $D\subset {\mathbb R}^3$ denotes a bounded open and connected
region with $C^\infty$-smooth boundary $\partial D:=\Gamma$ and $\nu$ denotes
the inward unit normal vector on $\Gamma$ (see Figure \ref{ITEMaxwellFigure1} for an example of the geometry). We set $\vecL^2(D):=L^2(D)^3$,
$\vecH^m(D):=H^m(D)^3$ and
define
$$
\vecH(\mcurl\!^2, D):=\left\{\vecu\in \vecL^2(D); \mcurl \vecu\in \vecL^2(D) \; \mbox{and} \; \mcurl \mcurl \vecu \in \vecL^2(D)\right\}
$$
$$
\vecL(\mcurl\!^2, D):=\left\{\vecu\in \vecL^2(D); \mcurl \mcurl \vecu \in \vecL^2(D)\right\}
$$
endowed with the graph norm and define
$$
\vecH_0(\mcurl\!^2, D):=\left\{ \vecu\in \vecH(\mcurl\!^2, D); \gamma_t\vecu=0 \; \mbox{and}\; \gamma_t \mcurl \vecu=0 \; \mbox{on} \; \Gamma \right\}
$$
where $\gamma_t \vecu := \nu \times \vecu|_\Gamma$. 
\begin{definition}
Values of $k\in{\mathbb C}$ for which (\ref{IntroE})-(\ref{IntroNeumann}) has a nontrivial solution $\vecE, \vecE_0 \in \vecL(\mcurl\!^2,D)$ and  $\vecE-\vecE_0 \in \vecH_0(\mcurl\!^2, D)$ are called transmission eigenvalues.
\end{definition}

Following the approach in \cite{R1, S} for the scalar case, we rewrite the
transmission eigenvalue problem in an equivalent form in terms of   $\vecu := \vecE-\vecE_0 \in
\vecH_0(\mcurl\!^2, D)$ and 
$\vecv:=k^2\vecE_0 \in \vecL(\mcurl\!^2,D)$ 
\begin{eqnarray}
&\mcurl \mcurl \vecu-k^2 (1+m) \vecu - m \vecv= 0 \quad &\mbox{in} \quad D  \label{IntroEu} \\
&\mcurl \mcurl \vecv-k^2 \vecv = 0 \quad &\mbox{in} \quad D \label{IntroEv}
\end{eqnarray}

\begin{definition}
Normalized non-trivial solutions $\vecu \in \vecH_0(\mcurl\!^2, D)$ and $\vecv \in \vecL(\mcurl\!^2,D)$ to equations (\ref{IntroEu})-(\ref{IntroEv}) are called transmission eigenvectors corresponding to $k$.
\end{definition}
%%%
\subsection{Function spaces for the transmission eigenvectors}
To study the PDEs (\ref{IntroEu})-(\ref{IntroEv}) and formulate the transmission eigenvalue problem, we first investigate the function spaces that transmission eigenvectors $\vecu$ and $\vecv$ belong to. This is the motivation of the next lemma.

\begin{lemma} \label{FormulationFunctionalSpace}
Assume that assumption \ref{Assumptionmconstant} holds and $\vecu \in \vecH_0(\mcurl\!^2, D)$ and $\vecv \in \vecL(\mcurl\!^2,D)$ are transmission eigenvectors corresponding to $k$. Then $\mdiv \vecu \in H^1(D)$, $\mdiv \vecv \in H^1(D)$ and in particular $\mdiv \vecv =0$.
\end{lemma}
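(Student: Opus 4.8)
The key point is that near the boundary $m$ is a nonzero constant, so on the neighborhood $N$ of $\Gamma$ the equations decouple in a divergence-friendly way. First I would take the divergence of \eqref{IntroEv}: since $\mdiv(\mcurl\mcurl\vecv)=0$ identically, \eqref{IntroEv} gives $k^2\,\mdiv\vecv=0$ in $D$, hence $\mdiv\vecv=0$ in $D$ (for $k\ne 0$; the case $k=0$ being excluded or trivial). In particular $\mdiv\vecv\in H^1(D)$ immediately. Next I would take the divergence of \eqref{IntroEu}: again $\mdiv(\mcurl\mcurl\vecu)=0$, so $-k^2\,\mdiv\big((1+m)\vecu\big)-\mdiv(m\vecv)=0$ in $D$. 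Wherever $m$ is smooth this reads $-k^2(1+m)\mdiv\vecu-k^2\grad m\cdot\vecu-m\,\mdiv\vecv-\grad m\cdot\vecv=0$; using $\mdiv\vecv=0$ this is an algebraic expression for $\mdiv\vecu$ in terms of $\vecu$, $\vecv$ and $\grad m$, showing $\mdiv\vecu$ has the same $L^2$-regularity as $\vecu,\vecv$ on the interior but not yet the $H^1$ gain we want.

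To upgrade to $H^1$ I would localize to $N$, where $m$ is a nonzero constant $m_0$. There \eqref{IntroEu} becomes $\mcurl\mcurl\vecu-k^2(1+m_0)\vecu=m_0\vecv$ in $N$, and taking the divergence gives $-k^2(1+m_0)\mdiv\vecu=m_0\,\mdiv\vecv=0$, so in fact $\mdiv\vecu=0$ in $N$. Thus $\mdiv\vecu$ vanishes in a neighborhood of $\Gamma$, and the only issue is interior regularity away from $\Gamma$, where $m$ is smooth. On any interior subdomain the relation $\mdiv\vecu=-\dfrac{k^2\grad m\cdot\vecu+\grad m\cdot\vecv}{k^2(1+m)}$ (valid where $1+m=n\ne 0$, which I would assume as part of the standing hypotheses, or handle via the ellipticity assumptions on $n$) expresses $\mdiv\vecu$ in terms of lower-order quantities. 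To get $H^1$ one applies the standard curl–div elliptic estimate: since $\vecu\in\vecL(\mcurl\!^2,D)$ and $\mdiv\vecu\in L^2$, a bootstrap using $\mcurl\vecu\in\vecL^2(D)$ together with control of $\mcurl\mcurl\vecu$ shows $\vecu\in\vecH^1_{loc}$, which then feeds back through the formula for $\mdiv\vecu$. Combined with the cutoff localization near $\Gamma$ where $\mdiv\vecu\equiv 0$, this yields $\mdiv\vecu\in H^1(D)$.

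The cleanest route, and the one I would actually write, avoids the interior bootstrap by handling $\vecu$ and $\vecv$ together. Since $\mdiv\vecv=0$ globally and $\mdiv\vecu=0$ in $N$, set $w:=\mdiv\vecu$; then $w$ is supported away from $\Gamma$ and satisfies, on the open set where $m$ is smooth, $w=-\dfrac{\grad m\cdot(k^2\vecu+\vecv)}{k^2(1+m)}$. Here the right-hand side involves $\vecu,\vecv\in\vecL^2(D)$ and smooth coefficients, so $w\in L^2$ with compact support in $D$; to gain a derivative I use that $\mcurl\vecu\in\vecL^2(D)$ and $\mdiv\vecu=w\in L^2$ with $\vecu$ compactly supported in a smooth domain, whence $\vecu\in\vecH^1$ on that support by the Friedrichs-type estimate $\|\vecu\|_{\vecH^1}\lesssim\|\mcurl\vecu\|+\|\mdiv\vecu\|+\|\vecu\|$, and then differentiating the formula for $w$ gives $w\in H^1$.

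**Main obstacle.** The genuinely delicate point is the passage from $L^2$ to $H^1$ for $\mdiv\vecu$: naively the formula for $\mdiv\vecu$ only transfers the regularity of $\vecu$ itself, so one must exploit the div–curl elliptic regularity (and the fact that $\mdiv\vecu$ vanishes near $\Gamma$, which removes the boundary-regularity issue) to close the loop. This is exactly where Assumption~\ref{Assumptionmconstant} is essential — if $m$ were not constant near $\Gamma$, one would have $-k^2(1+m)\mdiv\vecu=k^2\grad m\cdot\vecu+\grad m\cdot\vecv$ there as well, so $\mdiv\vecu$ would only be as regular as $\vecu$ and $\vecv$ up to the boundary, obstructing the clean $H^1$ statement (as the authors note in the introduction).
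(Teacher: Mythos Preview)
Your approach is essentially the same as the paper's: take the divergence of both equations to get $\mdiv\vecv=0$, express $\mdiv\vecu$ algebraically via $(1+m)\,\mdiv\vecu+\nabla m\cdot\vecu=-k^{-2}(\nabla m\cdot\vecv+m\,\mdiv\vecv)$, then bootstrap from $L^2$ to $H^1$ through div--curl regularity for $\vecu$.

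Two small points where the paper is cleaner than your write-up. First, rather than localizing to the interior, the paper uses the boundary condition $\gamma_t\vecu=0$ together with $\mcurl\vecu\in\vecL^2(D)$ and $\mdiv\vecu\in L^2(D)$ to obtain $\vecu\in\vecH^1(D)$ globally in one step (the standard vector-potential regularity result, \cite{ABDG}); your phrase ``$\vecu$ compactly supported'' is not correct, and the Friedrichs-type estimate you quote requires a boundary condition to hold on a bounded domain. Second, to differentiate your formula for $w=\mdiv\vecu$ you also need $\vecv\in\vecH^1$ on the support of $\nabla m$, which you do not address; the paper handles this explicitly by observing that $\vecv$ satisfies the vector Helmholtz equation $-\Delta\vecv=k^2\vecv$ (since $\mdiv\vecv=0$) and applying interior elliptic regularity on the compact support of $\nabla m$ to get $\nabla m\cdot\vecv\in H^1(D)$. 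Both points are easily repaired, and once they are, your argument coincides with the paper's.
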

\begin{proof}
Taking the divergence of \eqref{IntroEv} implies $\mdiv \vecv =0$ and therefore $\mdiv \vecv \in H^1(D)$. Taking the divergence of equation (\ref{IntroEu}) yields
\begin{eqnarray}
\label{divform}
(1+m)\mdiv \vecu + \nabla m \cdot \vecu = -k^{-2}(\nabla  m \cdot \vecv +m \mdiv \vecv).
\end{eqnarray}
Since  $\nabla m$ has compact support in $D$ and $\vecv$ satisfies a vectorial Helmholtz equation in $D$,  then standard
regularity results give $\nabla m \cdot \vecv \in H^1(D)$. Since $\mdiv \vecv \in H^1(D)$ and $\vecu \in \vecL^2(D)$, we deduce
from \eqref{divform} that $\mdiv \vecu \in L^2(D)$. Since $\mcurl \vecu \in \vecL^2(D)$ and $\gamma_t \vecu =0$, $\vecu \in \vecH^1(D)$ (c.f. \cite{ABDG}).  Hence, using again \eqref{divform}, $\mdiv \vecu \in
H^1(D)$ and we have proved the lemma.
\end{proof} \proofend

We now define the following spaces:
$$
\vecU(D) := \left\{\vecu\in \vecH_0(\mcurl\!^2, D); \mdiv \vecu \in H^1(D) \right\}
$$
and
$$
\vecV(D):=\left\{\vecv\in \vecL^2(D); \mcurl \mcurl \vecv \in \vecL^2(D) \; \mbox{and} \; \mdiv \vecv \in H^1(D) \right\}.
$$
%%%
\subsection{Relating the transmission eigenvalues to the spectrum of an operator}
Having studied the function spaces that transmission eigenvectors belong to, we are ready to introduce an operator which plays an important role in our analysis. We introduce the operator
$\vecB_\lambda$ defined on $\vecU(D)  \times \vecV(D)$ by
$$
\vecB_\lambda(\vecu, \vecv) = (\vecf,\vecg)
$$
where
\begin{eqnarray}
&\mcurl \mcurl \vecu-\lambda (1+m) \vecu - m \vecv= (1+m)\vecf \quad &\mbox{in} \quad D  \label{FormulationEu} \\
&\mcurl \mcurl \vecv-\lambda \vecv = \vecg \quad &\mbox{in} \quad D \label{FormulationEv}
\end{eqnarray}
and $\lambda \in \C$ is a fixed parameter (we will choose $\lambda$ later). We can now relate the transmission eigenvalue with the eigenvalues of $\vecB_\lambda$. In fact,
one observes that $k$ is a transmission eigenvalue if and only if $k^2-\lambda$
is an eigenvalue of $\vecB_\lambda$ (this also explains the motivation to define the operator $\vecB_\lambda$). 

\medskip

To study the invertibility of the operator $\vecB_\lambda$, we first investigate the range of $\vecB_\lambda$.
\begin{lemma}
Assume $\vecB_\lambda(\vecu, \vecv) = (\vecf,\vecg)$ and $(\vecu,\vecv) \in \vecU(D)  \times \vecV(D)$. Then $\vecf \in \vecL^2(D)$, $\mdiv \left( \left( 1+m \right)\vecf \right) \in H^1(D)$, $\vecg \in \vecL^2(D)$ and $\mdiv \vecg \in H^1(D)$.
\end{lemma}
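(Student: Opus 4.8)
The plan is to read all four assertions off from the defining relations \eqref{FormulationEu}--\eqref{FormulationEv} together with the hypothesis $(\vecu,\vecv)\in\vecU(D)\times\vecV(D)$, the only step that is more than bookkeeping being an interior elliptic regularity argument localised away from $\Gamma$. Three of the four claims are immediate. From \eqref{FormulationEv}, $\vecg=\mcurl\mcurl\vecv-\lambda\vecv\in\vecL^2(D)$ since $\mcurl\mcurl\vecv,\vecv\in\vecL^2(D)$ by the definition of $\vecV(D)$; taking the divergence of \eqref{FormulationEv} and using that the divergence of a curl vanishes gives $\mdiv\vecg=-\lambda\,\mdiv\vecv\in H^1(D)$. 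From \eqref{FormulationEu}, $(1+m)\vecf=\mcurl\mcurl\vecu-\lambda(1+m)\vecu-m\vecv$, and every term on the right lies in $\vecL^2(D)$ (using $m\in L^\infty(D)$ and $\vecu,\mcurl\mcurl\vecu,\vecv\in\vecL^2(D)$), so $(1+m)\vecf\in\vecL^2(D)$, and hence $\vecf\in\vecL^2(D)$ because $(1+m)^{-1}\in L^\infty(D)$.

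It remains to prove $\mdiv\big((1+m)\vecf\big)\in H^1(D)$, which is the only point requiring a genuine argument. Taking the divergence of $(1+m)\vecf=\mcurl\mcurl\vecu-\lambda(1+m)\vecu-m\vecv$ and again discarding the curl term, the Leibniz rule (together with $\nabla(1+m)=\nabla m$) gives
\[
\mdiv\big((1+m)\vecf\big)=-\lambda\big[(1+m)\,\mdiv\vecu+\nabla m\cdot\vecu\big]-\big[m\,\mdiv\vecv+\nabla m\cdot\vecv\big].
\]
The terms $(1+m)\,\mdiv\vecu$ and $m\,\mdiv\vecv$ belong to $H^1(D)$ at once, since $\mdiv\vecu,\mdiv\vecv\in H^1(D)$ and $m\in C^\infty(\overline D)$ (in particular $m\in W^{1,\infty}(D)$).

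For the two remaining terms I would use Assumption \ref{Assumptionmconstant}: it forces $\nabla m$ to be supported in a compact set $K\Subset D$, so it suffices to show that $\vecu$ and $\vecv$ are in $H^2$ on a neighbourhood of $K$. This follows from interior elliptic regularity applied to $-\Delta=\mcurl\mcurl-\nabla\mdiv$: indeed $-\Delta\vecu=\mcurl\mcurl\vecu-\nabla\mdiv\vecu\in\vecL^2(D)$ and $-\Delta\vecv=\mcurl\mcurl\vecv-\nabla\mdiv\vecv\in\vecL^2(D)$, both right-hand sides being $\vecL^2$ because $\mcurl\mcurl\vecu,\mcurl\mcurl\vecv\in\vecL^2(D)$ and $\mdiv\vecu,\mdiv\vecv\in H^1(D)$, whence $\vecu,\vecv\in\vecH^2_{\mathrm{loc}}(D)$. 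Then $\nabla m\cdot\vecu$ and $\nabla m\cdot\vecv$, being products of a $C_c^\infty(D)$ function with a vector field that is $H^2$ on the relevant compact set, lie in $H^1(D)$; adding the four terms yields $\mdiv((1+m)\vecf)\in H^1(D)$. The main (and essentially only) obstacle is this last step: since no boundary condition is imposed on $\vecu$ or $\vecv$, one cannot upgrade their regularity up to $\Gamma$, and Assumption \ref{Assumptionmconstant} is precisely what rescues the argument, by confining $\nabla m$ to the interior of $D$ where $H^2$ regularity is available.
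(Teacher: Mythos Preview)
Your proof is correct and follows essentially the same line as the paper: both take the divergence of \eqref{FormulationEu}--\eqref{FormulationEv}, note that the only non-trivial terms are $\nabla m\cdot\vecu$ and $\nabla m\cdot\vecv$, and then exploit that $\nabla m$ is compactly supported in $D$ together with elliptic regularity for the Laplacian (via $-\Delta=\mcurl\mcurl-\nabla\mdiv$) to place these in $H^1(D)$. The one notable difference is in the treatment of $\vecu$: the paper invokes the result of \cite{ABDG} to obtain $\vecu\in\vecH^2(D)$ globally (implicitly using the boundary conditions $\gamma_t\vecu=\gamma_t\mcurl\vecu=0$ carried by the transmission eigenvector), whereas you treat $\vecu$ and $\vecv$ symmetrically via interior $H^2_{\mathrm{loc}}$ regularity only. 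Your route is slightly more self-contained relative to the stated definition of $\vecU(D)$, at the cost of yielding only $\nabla m\cdot\vecv\in H^1(D)$ rather than the paper's $H^2(D)$; since only $H^1$ is needed for the lemma, this makes no difference.
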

\begin{proof}
Noting that $\vecv \in \vecV$ and $\mcurl \!^2 = \nabla \mdiv -\Delta$, we have that
$$
\Delta \vecv = \nabla \mdiv \vecv -\mcurl \!^2 \vecv \in \vecL^2(D).
$$
Since $\nabla m$ has compact support in $D$, standard elliptic regularity results yield $\nabla m \cdot \vecv \in H^2(D)$. Since
$$
\mdiv \left( m \vecv \right) = \nabla m \cdot \vecv + m \mdiv v,
$$
we have that
$$
\mdiv \left( m \vecv \right) \in H^1(D).
$$
Since $\vecu \in \vecU$, $\vecu \in \vecH^2(D)$(c.f. \cite{ABDG}). Therefore
$$
\mdiv \left( \left( 1+m \right)\vecf \right) = -\lambda \mdiv \left( \left( 1+m \right)\vecu \right) - \mdiv \left( m \vecv \right) \in H^1(D).
$$
$\mdiv \vecg \in H^1(D)$ follows directly from $\mdiv \vecv \in H^1(D)$. This proves our lemma.
\end{proof} \proofend

\medskip

We now define the following spaces:
$$
\vecF(D) := \left\{\vecf\in \vecL^2(D); \mdiv \left( \left( 1+m \right)\vecf \right)  \in H^1(D) \right\}
$$
and
$$
\vecG(D):=\left\{\vecg \in \vecL^2(D); \mdiv \vecg \in H^1(D)\right\}.
$$
%%%%%%%%%%%%%%%%%%%%%%%%%%%%%%%%%%%%%%%%%%%%%%%%%%%%%%%%%%%
\section{Regularity results for transmission eigenvectors} \label{Regularity}

As is seen from Section \ref{Formulation}, the analysis of transmission eigenvalues
will be obtained from the analysis of the spectrum of the operator
$\vecB_\lambda$ or more precisely of its inverse $\vecR_\lambda$. To show the existence of $\vecR_\lambda$ for well chosen $\lambda$, we need certain regularity results and this is the purpose of this section. Moreover, the regularity results in this section (in particular Theorem \ref{Regularity2}) is important to apply the spectral theory of Hilbert-Schmidt operator in section \ref{Main}. The reader may proceed to read section \ref{InverseBz} and section \ref{Main} by assuming Theorem \ref{Regularity1} and \ref{Regularity2} and come back to the technical details in this section after that.
\medskip

In this section we will derive a detailed study of equations
(\ref{FormulationEu})-(\ref{FormulationEv}). Roughly speaking we will show that, for appropriate $\lambda$ the solutions $\vecu$ and $\vecv$ are bounded by $\vecf$ and $\vecg$ in appropriate norms. The idea is based on applying the semiclassical pseudo-differential calculus
used in \cite{R1} for the scalar problem. The analysis for Maxwell's equations
requires non trivial adaptations since the normal component of the
trace of $\vecu$ does not necessarily vanish, the $\mcurl \mcurl$ operator is
not strongly elliptic and the compact embedding for Maxwell's equations are
more complicated. Restricting ourselves to the case $m$ is constant
near the boundary simplifies the analysis since one can first derive a semiclassical estimate for the normal component of
the trace of $\vecv$. This allows us to then derive estimates for $\vecu$ and
$\vecv$.  In order to write the equation for the normal trace of $\vecv$ and
apply the analysis in \cite{R1} we first need to rewrite
(\ref{FormulationEu})-(\ref{FormulationEv}) as a problem in $\R^3$.

\subsection{Extending solutions to $\R^3$}
To begin with, we introduce a tubular neighborhood $D_\epsilon$ of $\Gamma$, where 
$$
D_\epsilon=\left\{ x: x= y+ s \nu(y), \, y \in \Gamma, \, 0 \le s < \epsilon \right\}.
$$ 

We define
$$
\Gamma_s=\left\{ x: x= y+ s \nu(y), \, y \in \Gamma \right\}.
$$ 
The boundary $\Gamma$ corresponds to $\Gamma_s$ with $s=0$.

\medskip

To deal with the boundary conditions on $\Gamma$, we follow the idea in \cite{R1} and extend the transmission eigenvectors by $0$ outside $D$.  To begin with, let us introduce
\begin{equation*}
\uvecu=\left\{ \begin{array}{cc}
\vecu(x) &\qquad \mbox{in} \quad D\\
0 &\qquad \mbox{in} \quad {\R}^3 \backslash \overline{D}.
\end{array}
\right.
\end{equation*}
\begin{lemma}
Assume $(\vecf,\vecg)=\vecB_\lambda (\vecu,\vecv)$ as defined by equations (\ref{FormulationEu}) and (\ref{FormulationEv}). Then $\uvecu$ and $\uvecv$ satisfy the following
\begin{eqnarray} 
-\Delta \uvecu -\lambda (1+m) \uvecu - m \uvecv = (1+m)\underline{\vecf} - \nabla \underline{\mdiv \vecu} -  \nabla_\Gamma (\vecu_N \cdot \nu)  \otimes \delta_{s=0} - \vecu_N \otimes D_s \delta_{s=0} \label{TubularNeighborhoodEquationu}
\end{eqnarray}
\begin{eqnarray} 
-\Delta \uvecv -\lambda \uvecv = \underline{\vecg} + \lambda^{-1}\nabla \underline{ \mdiv \vecg} - (2H\vecv_{T}+ \frac{\partial \vecv_{T}}{\partial \nu}- \nu \mdiv\!_\Gamma \vecv_T) \otimes \delta_{s=0} - \vecv \otimes D_s \delta_{s=0}  \label{TubularNeighborhoodEquationv}
\end{eqnarray}
where $\gamma \vecu := \vecu |_\Gamma$, $\vecu_T :=\gamma_T \vecu:=\nu \times (\vecu \times \nu)|_\Gamma$ and $\vecu_N :=\gamma_N \vecu=\nu (\vecu \cdot \nu)|_\Gamma$. Here $\delta_{s=0}$ is the delta distribution on $\Gamma$ and $D_s$ is the normal derivative.
\end{lemma}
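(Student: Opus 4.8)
The plan is to derive the two distributional equations by taking the componentwise Laplacian of the extended fields and carefully tracking the boundary terms that arise from the jumps across $\Gamma$. The starting point is the identity $\curl\curl = \nabla\mdiv - \Delta$, valid on smooth vector fields, which we will apply inside $D$ where $\vecu$ and $\vecv$ are smooth enough (they lie in $\vecU(D)$ and $\vecV(D)$ respectively, so in particular $\vecu\in\vecH^2(D)$ by \cite{ABDG}, and $\Delta\vecv\in\vecL^2(D)$ as computed in the previous lemma). Rewriting \eqref{FormulationEu} inside $D$ gives $-\Delta\vecu = \lambda(1+m)\vecu + m\vecv + (1+m)\vecf - \nabla\mdiv\vecu$, and similarly from \eqref{FormulationEv}, $-\Delta\vecv = \lambda\vecv + \vecg - \nabla\mdiv\vecv$; for the latter we use $\mdiv\vecg = -\lambda\mdiv\vecv$ (from taking the divergence of \eqref{FormulationEv}), hence $\nabla\mdiv\vecv = -\lambda^{-1}\nabla\mdiv\vecg$, which produces the $\lambda^{-1}\nabla\underline{\mdiv\vecg}$ term seen on the right of \eqref{TubularNeighborhoodEquationv}.

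Next I would compute $\Delta\uvecu$ and $\Delta\uvecv$ as distributions on $\R^3$. The general principle is that for a field $\vecw$ smooth on $D$ and zero outside, $\Delta\underline{\vecw} = \underline{\Delta\vecw} - (\gamma\vecw)\otimes D_s\delta_{s=0} - (\partial_\nu\vecw|_\Gamma)\otimes\delta_{s=0} - (\text{curvature corrections})$, where the curvature terms ($2H$, with $H$ the mean curvature) come from expressing the flat Laplacian in the boundary normal coordinates $x = y + s\nu(y)$, exactly as in \cite{R1}. For $\uvecu$ this is cleaner: since $\vecu\in\vecH_0(\mcurl^2,D)$ we have $\gamma_t\vecu = 0$ and $\gamma_t\curl\vecu = 0$ on $\Gamma$, so the tangential part of the trace vanishes and only the normal part $\vecu_N = \nu(\vecu\cdot\nu)$ survives in the $D_s\delta$ term; moreover the two tangential-trace vanishing conditions combine to show that the Dirichlet-type boundary term reduces to $\nabla_\Gamma(\vecu_N\cdot\nu)\otimes\delta_{s=0}$, which is where I expect the bulk of the bookkeeping to lie — one must decompose $\partial_\nu\vecu$ into surface gradient and normal pieces and use that $\curl\vecu$ has vanishing tangential trace to kill the tangential derivatives of the tangential components. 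For $\uvecv$ there is no boundary condition, so the full trace $\gamma\vecv$ appears in the $D_s\delta$ term (written simply as $\vecv\otimes D_s\delta_{s=0}$), and the normal-derivative term is organized into $2H\vecv_T + \partial\vecv_T/\partial\nu - \nu\,\mdiv_\Gamma\vecv_T$ using the surface-divergence identity and the curvature of $\Gamma_s$.

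The main obstacle, I expect, is getting the boundary terms in precisely the stated form — in particular justifying that for $\uvecu$ the singular support contribution collapses to $\nabla_\Gamma(\vecu_N\cdot\nu)$ rather than a generic first-order surface operator applied to $\gamma\vecu$. This requires using \emph{both} $\gamma_t\vecu = 0$ and $\gamma_t\curl\vecu = 0$ simultaneously: the first gives $\vecu_T|_\Gamma = 0$, and differentiating this tangentially controls the tangential derivatives of tangential components, while the second controls the relevant tangential derivatives of the normal component via the definition of curl in boundary coordinates. The computation of $\Delta\underline{\vecw}$ itself is a distributional Green's-formula argument: pair against a test field, integrate by parts twice over $D$, and collect the surface integrals over $\Gamma$ — these become the $\delta_{s=0}$ and $D_s\delta_{s=0}$ terms, with the mean-curvature factor $2H$ entering from $\mdiv\nu = -2H$ (sign depending on the inward-normal convention adopted here) when the flat Laplacian acts on functions constant along normals. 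I would not grind through the coordinate computation in the tubular neighborhood but would cite \cite{R1} for the scalar template and indicate the componentwise adaptation, flagging that the curvature term appears on $\vecv_T$ only because the normal component $\vecv_N$ contributes a term that recombines with the normal part of $\partial_\nu\vecv$.
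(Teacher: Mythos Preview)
Your proposal is correct and follows essentially the same route as the paper: rewrite $\mcurl\mcurl = \nabla\mdiv - \Delta$ inside $D$, compute the distributional Laplacian of the zero extensions using the jump formula in geodesic coordinates (citing \cite{R1,N}), absorb $\underline{\nabla\mdiv\vecu}$ into $\nabla\underline{\mdiv\vecu}$ via the jump relation $\nabla\underline{\mdiv\vecu} = \underline{\nabla\mdiv\vecu} + (\nu\,\mdiv\vecu)\otimes\delta_{s=0}$ together with $\nu\,\mdiv\vecu = \nu\,\mdiv_\Gamma\vecu_T + 2H\vecu_N + \partial_\nu\vecu_N$, and finally use the identity $\nu\times\mcurl\vecu = \nabla_\Gamma(\vecu_N\cdot\nu) + \nu\times(R(\vecu\times\nu)) - 2H\vecu_T - \partial_\nu\vecu_T$ from \cite{N} combined with $\vecu_T=0$, $(\mcurl\vecu)_T=0$ to obtain $\partial_\nu\vecu_T = \nabla_\Gamma(\vecu_N\cdot\nu)$. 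The paper makes these two boundary identities from \cite{N} explicit where you describe them as ``bookkeeping,'' but the argument is the same.
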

\begin{proof}
From $\Delta$ in geodesic coordinates (c.f. \cite{R1}  and \cite{N}),
we have that
\begin{eqnarray*} 
&&\Delta \uvecu = \underline{\Delta \vecu} + (2H \vecu_T+ \frac{\partial \vecu_T}{\partial \nu}+2H\vecu_N + \frac{\partial \vecu_N}{\partial \nu}) \otimes \delta_{s=0} + (\vecu_T + \vecu_N)\otimes D_s \delta_{s=0}
\end{eqnarray*}
where $H$ is a smooth function on $\Gamma$ (see Appendix).
From $\mcurl \!^2 = \nabla \mdiv - \Delta$ we are able to rewrite the equations (\ref{FormulationEu})-(\ref{FormulationEv}) as follows 
\begin{eqnarray}\label{LemmaTubularu}
-\Delta \uvecu -\lambda (1+m) \uvecu - m \uvecv &=& (1+m)\underline{\vecf} - \underline{\nabla \mdiv \vecu} - (\vecu_{T} + \vecu_N)\otimes D_s \delta_{s=0}  \nonumber \\
&-&  (2H\vecu_{T}+ \frac{\partial \vecu_{T}}{\partial \nu}+2H\vecu_N + \frac{\partial \vecu_N}{\partial \nu}) \otimes \delta_{s=0} \label{LemmaTubularv}
\end{eqnarray}
and
\begin{eqnarray} \label{LemmaTubularv}
-\Delta \uvecv -\lambda \uvecv &=& \underline{\vecg} -\underline{\nabla \mdiv \vecv} - (2H\vecv_{T}+ \frac{\partial \vecv_{T}}{\partial \nu}+2H\vecv_N + \frac{\partial \vecv_N}{\partial \nu}) \otimes \delta_{s=0} \nonumber \\
&-& (\vecv_{T} + \vecv_N)\otimes D_s \delta_{s=0} .
\end{eqnarray}
We now use the fact that (c.f. \cite{N})
\begin{eqnarray*}
\nabla \underline{\mdiv \vecu} &=& \underline{\nabla \mdiv \vecu} + (\nu \mdiv \vecu) \otimes \delta_{s=0} \\
\nu \mdiv \vecu &=& \nu \mdiv\!_{\Gamma} \vecu_T + 2H \vecu_N + \frac{\partial \vecu_N}{\partial \nu}  \quad \mbox{on} \quad \Gamma
\end{eqnarray*}
with the same equations hold for $\vecv$. Using above two equations to simplify equations (\ref{LemmaTubularu})-(\ref{LemmaTubularv}) we get
\begin{eqnarray*}
-\Delta \uvecu -\lambda (1+m) \uvecu - m \uvecv &=& (1+m)\underline{\vecf} - \nabla \underline{ \mdiv \vecu} - (\vecu_{T} + \vecu_N)\otimes D_s \delta_{s=0} \\
&-&  (2H\vecu_{T}+ \frac{\partial \vecu_{T}}{\partial \nu}- \nu \mdiv\!_{\Gamma} \vecu_T ) \otimes \delta_{s=0}  
\end{eqnarray*}
and
\begin{eqnarray*}
-\Delta \uvecv -\lambda \uvecv &=& \underline{\vecg} -\nabla \underline{ \mdiv \vecv}- (2H\vecv_{T}+ \frac{\partial \vecv_{T}}{\partial \nu}-\nu \mdiv\!_{\Gamma} \vecv_T ) \otimes \delta_{s=0} \\
&-& (\vecv_{T} + \vecv_N)\otimes D_s \delta_{s=0} .
\end{eqnarray*}
We now use (c.f. \cite{N})
$$
\nu \times \mcurl \vecu = \nabla_\Gamma (\vecu_N \cdot \nu) + \nu \times (R (\vecu \times \nu)) -2H\vecu_T- \frac{\partial \vecu_{T}}{\partial \nu}  \quad \mbox{on} \quad \Gamma .
$$
Then $\vecu_T=0$ and $(\mcurl \vecu) _T=0$ yields
$$
\frac{\partial \vecu_{T}}{\partial \nu}  = \nabla_\Gamma (\vecu_N \cdot \nu) \quad \mbox{on} \quad \Gamma 
$$
and therefore we get (\ref{TubularNeighborhoodEquationu}). From equation (\ref{FormulationEv})
\begin{eqnarray*}
-\lambda \mdiv \vecv = \mdiv \vecg.
\end{eqnarray*}
This yields equation (\ref{TubularNeighborhoodEquationv}).
\end{proof} \proofend

The following lemma is important in our analysis as it allows us in subsection \ref{SectionRegularity1} to derive an estimate only involving $\vecv_N$.
\begin{lemma}
Assume $(\vecf,\vecg)=\vecB_\lambda (\vecu,\vecv)$ as defined by equations (\ref{FormulationEu}) and (\ref{FormulationEv}). Then 
\begin{eqnarray*}
\lambda \vecu_N=- \frac{m}{(1+m)}\vecv_N - \vecf_N .
\end{eqnarray*}
In particular for $\lambda=h^{-2} \mu$ where $h>0$ and $\mu \not=0 \in \C$, we have
\begin{eqnarray} \label{VectorialPotentialsutov}
\vecu_N=-h^2 \frac{m}{\mu(1+m)}\vecv_N - h^2 \frac{1}{\mu} \vecf_N .
\end{eqnarray}
\end{lemma}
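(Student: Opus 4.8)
The plan is to obtain the identity by projecting equation (\ref{FormulationEu}) onto the normal direction and restricting to $\Gamma$; the point is that the boundary conditions built into $\vecH_0(\mcurl\!^2,D)$ make the second-order term vanish on $\Gamma$. Before doing so I would record the regularity that makes all the traces below meaningful: since $\vecu\in\vecU(D)$ we have $\vecu\in\vecH^2(D)$ (c.f. \cite{ABDG}), so $\mcurl\vecu\in\vecH^1(D)$ has a full trace on $\Gamma$; moreover $\mdiv(\mcurl\mcurl\vecu)=0$, hence $\mcurl\mcurl\vecu\in\vecH(\mdiv,D)$ possesses a normal trace $\nu\cdot\mcurl\mcurl\vecu|_\Gamma\in H^{-1/2}(\Gamma)$. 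Likewise $\vecf\in\vecF(D)$ gives $(1+m)\vecf\in\vecH(\mdiv,D)$, so $\vecf_N$ is well defined; and since $1+m=n$ is a nonzero constant in $N$, division by $1+m$ near $\Gamma$ is harmless.

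The key step is to show $\nu\cdot\mcurl\mcurl\vecu=0$ on $\Gamma$. This follows from the standard surface identity that expresses $\nu\cdot\mcurl\vecw|_\Gamma$ (for $\vecw\in\vecH(\mcurl,D)$) as a first-order tangential differential operator (the scalar surface curl) applied only to the tangential trace $\gamma_T\vecw$. Applying this with $\vecw=\mcurl\vecu$ and using that $\vecu$ carries the conditions $\gamma_t\vecu=0$ and $\gamma_t\mcurl\vecu=0$ (recall $\vecu\in\vecH_0(\mcurl\!^2,D)$, already exploited in the preceding proof via $\vecu_T=(\mcurl\vecu)_T=0$), we get $\gamma_T\mcurl\vecu=0$ and therefore $\nu\cdot\mcurl\mcurl\vecu|_\Gamma=0$. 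Alternatively, this vanishing can be obtained within the same geodesic-coordinate framework used just above.

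It then suffices to restrict (\ref{FormulationEu}) to $\Gamma$ and take the inner product with $\nu$: the $\mcurl\mcurl\vecu$ term drops out, and since $m$ is constant near $\Gamma$ one obtains, after multiplying back by $\nu$ and writing $\vecu_N=(\nu\cdot\vecu)\nu$ (and similarly for $\vecv_N$ and $\vecf_N$), the relation $-\lambda(1+m)\vecu_N-m\vecv_N=(1+m)\vecf_N$. Dividing by $1+m$ gives $\lambda\vecu_N=-\frac{m}{1+m}\vecv_N-\vecf_N$, and the special case (\ref{VectorialPotentialsutov}) follows by setting $\lambda=h^{-2}\mu$ and multiplying by $h^2/\mu$. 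I expect the only genuinely delicate point to be the second step, i.e. justifying that $\nu\cdot\mcurl\mcurl\vecu$ admits a trace on $\Gamma$ and that this trace vanishes; the divergence-free structure of $\mcurl\mcurl\vecu$ combined with the surface-curl identity is the clean route, but one could equally extract the identity by matching the Dirac-type singular contributions supported on $\Gamma$ in the already-derived distributional equation (\ref{TubularNeighborhoodEquationu}).
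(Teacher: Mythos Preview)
Your proposal is correct and follows essentially the same route as the paper: project (\ref{FormulationEu}) onto $\nu$ at $\Gamma$, then kill the $\mcurl\mcurl\vecu$ contribution via the surface identity $\nu\cdot\mcurl\vecw=-\mdiv_\Gamma(\vecw\times\nu)$ applied to $\vecw=\mcurl\vecu$ together with $\gamma_t\mcurl\vecu=0$. The paper's proof is simply the two-line version of this, without your additional trace-regularity bookkeeping.
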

\begin{proof}
Equation (\ref{FormulationEu}) yields
\begin{eqnarray*}
\lambda (1+m) \vecu_N=- m \vecv_N - (1+m)\vecf_N + \mcurl\mcurl \vecu \cdot \nu.
\end{eqnarray*}
Since $\mcurl \vecu \times \nu=0$, then $\mcurl\mcurl \vecu \cdot \nu=-\mdiv\!_\Gamma (\mcurl \vecu \times \nu)=0$. Then we can prove the lemma.
\end{proof} \proofend
%%%%%%%%%%%%%%%%%%%%%%%%%%%%
\subsection{A first regularity result} \label{SectionRegularity1}
We prove in this subsection a first explicit continuity result for $(\vecu, \vecv) \in \vecU(D) \times \vecV(D)$
satisfying 
$$\vecB_\lambda(\vecu,\vecv)=(\vecf,\vecg)$$
 for certain large values of $\lambda$. We refer to the Appendix for notations related to pseudo-differential
calculus and some key results from \cite{R1}. Readers may need to read the Appendix first to be able to understand the proof. 

\medskip

Throughout this section, we let $h:=\frac{1}{|\lambda|^{\frac{1}{2}}}$ and $\mu:=h^2 \lambda$. Multiplying equations
(\ref{TubularNeighborhoodEquationu}) and  (\ref{TubularNeighborhoodEquationv})  by $h^2$ yields
\begin{eqnarray} \label{1stRegularityPDE1}
-h^2\Delta \uvecu -\mu (1+m) \uvecu - h^2 m \uvecv &=& h^2(1+m)\underline{\vecf} + \frac{h}{i} \nabla_h \underline{\mdiv \vecu}  \nonumber \\
&+&  \frac{h}{i} \nabla_\Gamma^h(\vecu_N \cdot \nu) \otimes \delta_{s=0} +  \frac{h}{i}\vecu_N \otimes D^h_s \delta_{s=0} 
\end{eqnarray}
and
\begin{eqnarray} \label{1stRegularityPDE2}
-h^2\Delta \uvecv -\mu \uvecv &=& h^2\underline{\vecg} -\frac{h^3}{i\mu} \nabla_h \underline{\mdiv \vecg} \nonumber \\
&+& \frac{h}{i}( 2 \frac{h}{i} H\vecv_{T}+ \frac{\partial_h \vecv_{T}}{\partial \nu}- \nu \mdiv\!_\Gamma^h \vecv_T ) \otimes \delta_{s=0} + \frac{h}{i} \vecv \otimes D^h_s \delta_{s=0}
\end{eqnarray}
(see Appendix for notations of $D_{x_j}^h$, $\nabla_h$, $\frac{\partial_h}{\partial \nu}$).
We define $\vecJ (\vecv_T)$ by
$$
\vecJ(\vecv_T):= 2 \frac{h}{i} H\vecv_{T}+ \frac{\partial_h \vecv_{T}}{\partial \nu}- \nu \mdiv\!_\Gamma^h \vecv_T.
$$
Based on these two equations, we will derive the desired regularity results. 

\medskip

Before digging into the technical estimates, we first explain the ideas and what we are doing in each Lemma and Theorem. The general idea is to get first an estimate for
$\vecv_N$ and $\vecu_N$. This will allow us to derive estimates for $\vecv$ and
$\vecJ(\vecv_T)$ and consequently estimates for $\vecv$ and $\vecu$. 

\medskip

More specifically, it will be seen in Theorem \ref{Regularity1} that the
estimates of $\vecu$ and $\vecv$ stems from the estimates of
$\vecv_N$ in ${\vecH^{-\frac{1}{2}}_{sc}(\Gamma)}$ and of 
$\vecJ(\vecv_T)$ in ${\vecH^{-\frac{3}{2}}_{sc}(\Gamma)}$ evidenced from
(\ref{SectionRegularity1Step4VJ}) and (\ref{Section2Step4Estimatev}). To get an 
estimate for $\vecv_N$ in ${\vecH^{-\frac{1}{2}}_{sc}(\Gamma)}$ we will need to 
get an estimate for $\vecg_5$ in ${\vecH^{\frac{3}{2}}_{sc}(\Gamma)}$ as is seen from 
(\ref{sec2Regularity1Estimatevbyg5}). The estimate for $\vecg_5$ is obtained by
establishing an equation for 
$\vecu_N$ that allows us to control the ${\vecH^{\frac{3}{2}}_{sc}(\Gamma)}$
norm of
this boundary term. This is the first main additional
technical  difference between the scalar problem treated in \cite{R1}
and the present one. For the scalar case this step in not needed since the 
solution has vanishing traces on the boundary. 

\medskip

Therefore, Lemma \ref{1stRegularityuLemma}, Lemma \ref{1stRegularityvNLemma},
Lemma \ref{1stRegularityvNuLemma} and Lemma \ref{1stRegularityuNLemma}  serve
to derive the desired estimate for $\vecu_N$ in
${\vecH^{\frac{3}{2}}_{sc}(\Gamma)}$. In Lemma \ref{1stRegularityuNLemma}, we
derive an estimate for $\vecu_N$ that only involves $\vecv$, $\vecf$ and
$\vecg$. This will serve to obtain an estimate for $\vecv$ in Theorem
\ref{Regularity1}. The estimate of $\vecu_N$
in ${\vecH^{\frac{3}{2}}_{sc}(\Gamma)}$ stems from estimate of $\vecv_N$ in
${\vecH^{-\frac{1}{2}}_{sc}(\Gamma)}$. This is the motivation of Lemma
\ref{1stRegularityvNuLemma}: an a priori estimate on $\vecv_N$ independent of
$\vecu$. To fullfill this, we derive an a priori estimate for $\vecv_N$ (involving $\vecu$) in Lemma \ref{1stRegularityvNLemma} and an a priori estimate on $\vecu$ involving $\vecv_N$ in Lemma \ref{1stRegularityuLemma} (such that we can eliminate $\vecu$ in Lemma \ref{1stRegularityvNuLemma}). 

\medskip

Now we begin with the following lemma.

\begin{lemma} \label{1stRegularityuLemma}
Assume that assumption \ref{Assumptionmconstant} holds. Assume in addition that $|\xi|^2-\mu \not = 0$, $|\xi|^2- (1+m)\mu \not = 0$ for any $\xi$ and $x \in \overline{D}$.  Then for sufficiently small $h$
\begin{eqnarray} \label{Regularityu}
\|\vecu\|_{\vecL^2(D)} &\lesssim& h^2 \|\vecv\|_{\vecL^2(D)}+h^2 \|\vecf\|_{\vecL^2(D)}+h^5 \|\vecg\|_{\vecL^2(D)}+ h^5 \|\mdiv \vecg\|_{L^2(D)}  \nonumber \\
&+&h^2 \|\mdiv \vecf\|_{L^2(D)} +  h^{\frac{5}{2}}|\vecv_N|_{\vecH^{-\frac{1}{2}}_{sc}(\Gamma)} .
\end{eqnarray}
\end{lemma}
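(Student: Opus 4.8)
The plan is to obtain the $\vecL^2(D)$ estimate for $\vecu$ by working with the extended equation \eqref{1stRegularityPDE1}, viewing the right-hand side as a distribution supported in $\overline{D}$ and inverting the semiclassical operator $-h^2\Delta - \mu(1+m)$. First I would split the problem: since $m$ is constant in the tubular neighborhood $D_\epsilon$ and vanishes (as $\nabla m$) in the interior, I can treat \eqref{1stRegularityPDE1} as a constant-coefficient semiclassical elliptic equation modulo lower-order terms. The hypothesis $|\xi|^2-(1+m)\mu\neq 0$ guarantees that the semiclassical principal symbol $|\xi|^2 - (1+m)\mu$ is elliptic (bounded away from zero on the cosphere), so the parametrix construction from the Appendix / \cite{R1} applies and yields a bounded inverse $\bigl(-h^2\Delta-\mu(1+m)\bigr)^{-1}\colon \vecL^2 \to \vecL^2$ with norm $O(1)$, gaining two powers of $h$ against each $h^2\Delta$.

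\textbf{Key steps, in order.} (1) Rewrite \eqref{1stRegularityPDE1} as $\bigl(-h^2\Delta - \mu(1+m)\bigr)\uvecu = h^2 m\,\uvecv + (\text{RHS of }\eqref{1stRegularityPDE1})$, and apply the $O(1)$-bounded parametrix to both sides; the volume terms $h^2(1+m)\underline{\vecf}$ and $h^2 m\uvecv$ contribute the $h^2\|\vecv\|_{\vecL^2(D)}$ and $h^2\|\vecf\|_{\vecL^2(D)}$ terms directly. (2) The term $\frac{h}{i}\nabla_h\underline{\mdiv\vecu}$ is handled by first noting $\nabla_h\underline{\mdiv\vecu} = \underline{\nabla_h \mdiv\vecu} + (\text{boundary delta})$; the interior piece is bounded using $\|\mdiv\vecu\|_{L^2}$, but $\mdiv\vecu$ is itself controlled via equation \eqref{divform}, which gives $\|\mdiv\vecu\|_{L^2(D)} \lesssim \|\mdiv\vecf\|_{L^2} + h^2\|\vecg\|_{L^2} + h^2\|\mdiv\vecg\|_{L^2} + \ldots$, explaining the $h^2\|\mdiv\vecf\|$, $h^5\|\vecg\|$, $h^5\|\mdiv\vecg\|$ contributions (the extra $h^3$ coming from the $\lambda^{-1}=h^2/\mu$ and one more $h$ from $\nabla_h$). (3) The boundary terms $\frac{h}{i}\nabla_\Gamma^h(\vecu_N\cdot\nu)\otimes\delta_{s=0}$ and $\frac{h}{i}\vecu_N\otimes D_s^h\delta_{s=0}$ are estimated using the trace/lifting mapping properties of the semiclassical parametrix acting on single- and double-layer distributions (again from the Appendix): a density $\otimes\,\delta_{s=0}$ in $\vecH^{s}_{sc}(\Gamma)$ maps into $\vecL^2(D)$ with a gain, and a $D_s^h\delta_{s=0}$ term costs one semiclassical derivative. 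Since $\vecu_N$ is expressed via \eqref{VectorialPotentialsutov} as $-h^2\frac{m}{\mu(1+m)}\vecv_N - h^2\frac{1}{\mu}\vecf_N$, substituting gives the $h^{5/2}|\vecv_N|_{\vecH^{-1/2}_{sc}(\Gamma)}$ term (two powers from the $h^2$ prefactor in \eqref{VectorialPotentialsutov}, one more from the explicit $h/i$, and a half-power bookkeeping from the layer-potential mapping $\vecH^{-1/2}_{sc}(\Gamma)\to\vecL^2(D)$), while the $\vecf_N$ contribution is absorbed into the $h^2\|\vecf\|$ (or a trace thereof, controlled by $\|\vecf\|_{\vecF(D)}$-type norms) term. (4) Finally, for $h$ small the parametrix remainder is $O(h^\infty)$ and is absorbed into the left-hand side, completing the estimate.

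\textbf{Main obstacle.} The delicate point is step (3): the normal-trace boundary terms are genuinely new compared to the scalar case of \cite{R1}, where the eigenfunctions have vanishing Cauchy data. One must carefully track how the semiclassical single- and double-layer potentials built from the parametrix of $-h^2\Delta-\mu(1+m)$ act on $\Gamma$, in particular the precise power of $h$ lost or gained, and verify that substituting \eqref{VectorialPotentialsutov} for $\vecu_N$ does not reintroduce $\vecu$ on the right-hand side (it does not, since \eqref{VectorialPotentialsutov} involves only $\vecv_N$ and $\vecf_N$). A secondary technical care is that $|\xi|^2 - \mu \neq 0$ is needed not here directly but to ensure the analogous operator for $\vecv$ in \eqref{1stRegularityPDE2} is elliptic, which matters when this lemma feeds into the later lemmas; within this lemma itself only $|\xi|^2-(1+m)\mu\neq 0$ is essential. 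I expect the bookkeeping of $h$-powers through the layer potentials and through \eqref{divform} to be the part requiring the most attention, but no conceptually new idea beyond the Appendix machinery and the already-established identities.
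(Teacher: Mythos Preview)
Your proposal is correct and follows essentially the same route as the paper: apply the parametrix $Q$ of $-h^2\Delta-\mu(1+m)$ to \eqref{1stRegularityPDE1}, bound the volume terms directly, control $\|\mdiv\vecu\|_{L^2}$ by taking the divergence of \eqref{FormulationEu}, and handle the boundary layers via the Appendix estimate \eqref{PreliminaryResultsBounaryToDomain} together with the substitution \eqref{VectorialPotentialsutov}. Two small corrections: first, the divergence identity produces a term $\nabla m\cdot\vecu$ on the right, so $\|\mdiv\vecu\|_{L^2}$ carries an $\|\vecu\|_{\vecL^2}$ contribution that, after multiplication by $h$, must be absorbed into the left side for small $h$ (this is the actual absorption step, not merely the $O(h^\infty)$ parametrix remainder); second, the hypothesis $|\xi|^2-\mu\neq0$ \emph{is} used inside this lemma---the paper invokes the interior ellipticity estimate \eqref{PreliminaryResultsvBoundary} for $\vecv$ to sharpen $\|\nabla m\cdot\vecv\|_{L^2}$, which is where the $h^5\|\vecg\|$ and $h^5\|\mdiv\vecg\|$ powers come from.
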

\begin{proof}
From the Appendix, $Q$ is a parametrix of $-h^2\Delta -\mu (1+m)$, then applying $Q$ to equation (\ref{1stRegularityPDE1})
\begin{eqnarray}
\uvecu &=& h K_{-M} \uvecu + h^2 Q(m \uvecv) + h^2Q((1+m)\underline{\vecf}) + \frac{h}{i}Q( \nabla_h \underline{\mdiv \vecu}) \nonumber \\
&+&  Q(\frac{h}{i}\nabla_\Gamma^h(\vecu_N \cdot \nu) \otimes \delta_{s=0}) +  Q(\frac{h}{i}\vecu_N \otimes D^h_s \delta_{s=0})  \label{Section2Step1Eqnu}
\end{eqnarray}
where $K_{-M}$ denotes a semiclassical pseudo-differential operator of order $-M$ with $M$ positive and sufficiently large. 
From equation (\ref{Section2Step1Eqnu}), estimate (\ref{PreliminaryResultsBounaryToDomain}) and Lemma \ref{PreliminaryResultsQNablau}
\begin{eqnarray} \label{Section2Step2Estimateu}
\|\vecu\|_{\vecL^2(D)} &\lesssim& h^2 \|\vecv\|_{\vecL^2(D)}+h^2 \|\vecf\|_{L^2(D)}+h \|\mdiv \vecu\|_{\vecL^2(D)} +  h^{\frac{1}{2}}|\vecu_N|_{\vecH^{-\frac{1}{2}}_{sc}(\Gamma)} \nonumber \\
&+& h^{\frac{1}{2}}|\nabla_\Gamma^h(\vecu_N \cdot \nu)|_{\vecH^{-\frac{3}{2}}_{sc}(\Gamma)} .
\end{eqnarray}
Then a direct calculation (see the Calculation subsection \ref{RegularityCalculation}) yields the lemma.
\end{proof} \proofend

With reference to Appendix \ref{PreliminaryResultsPrincipleSemiclassicalSymbol} on the definition of $R_0(x,\xi')$, we begin with the following lemma.
\begin{lemma} \label{1stRegularityvNLemma}
Assume that assumption \ref{Assumptionmconstant} holds. Assume in addition that $|\xi|^2-\mu \not = 0$ , $|\xi|^2- (1+m)\mu \not = 0$ for any $\xi$ and $x \in \overline{D}$ and $R_0(x,\xi')-\frac{1+m}{2+m}\mu \not =0 $ for any $\xi'$ and $x \in \Gamma$.  Then for sufficiently small $h$
\begin{eqnarray} \label{RegularityvNormal}
|\vecv_N|_{\vecH^{-\frac{1}{2}}_{sc}(\Gamma)} &\lesssim& h^{\frac{3}{2}} \|\vecg\|_{\vecL^2(D)} + h^{\frac{5}{2}} \|\mdiv \vecg\|_{\vecL^2(D)} + h^{-\frac{1}{2}} \|\vecf\|_{\vecL^2(D)} + h^{-\frac{1}{2}} \|\mdiv \vecf\|_{L^2(D)} \nonumber \\
&+& h^{\frac{1}{2}}\|\vecv\|_{\vecL^2(D)} + h^{-\frac{3}{2}}\|\vecu\|_{\vecL^2(D)} + h^{-\frac{3}{2}}\|\mdiv \vecu\|_{L^2(D)} \nonumber \\
&+& h |\vecJ(\vecv_T)|_{\vecH^{-\frac{3}{2}}_{sc}(\Gamma)} + h |\gamma \vecv|_{\vecH^{-\frac{1}{2}}_{sc}(\Gamma)} .
\end{eqnarray}
\end{lemma}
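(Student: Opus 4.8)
The plan is to apply the parametrix construction from \cite{R1} to the scalar-type equation satisfied by the normal component $\vecv_N$ on $\Gamma$, exactly as in the scalar case, but keeping track of all the extra source terms that arise for the Maxwell system. Concretely, I would start from equation (\ref{1stRegularityPDE2}) for $\uvecv$, take the normal component (i.e.\ dot with $\nu$ near $\Gamma$ in the tubular coordinates), and observe that the principal part is governed by the operator $-h^2\Delta-\mu$, whose trace on $\Gamma$ is controlled by the non-characteristic condition $|\xi|^2-\mu\neq 0$. The terms $h^2\underline{\vecg}$ and $-\frac{h^3}{i\mu}\nabla_h\underline{\mdiv\vecg}$ contribute the $h^{3/2}\|\vecg\|_{\vecL^2(D)}$ and $h^{5/2}\|\mdiv\vecg\|_{\vecL^2(D)}$ bounds (the half-power loss coming from the trace theorem, i.e.\ from passing from a domain $L^2$ bound to a boundary $H^{-1/2}_{sc}$ bound), while the boundary layers $\vecJ(\vecv_T)\otimes\delta_{s=0}$ and $\vecv\otimes D^h_s\delta_{s=0}$ contribute the $h|\vecJ(\vecv_T)|_{\vecH^{-3/2}_{sc}(\Gamma)}$ and $h|\gamma\vecv|_{\vecH^{-1/2}_{sc}(\Gamma)}$ terms via the boundary-to-domain estimate (\ref{PreliminaryResultsBounaryToDomain}).

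The key new ingredient, and the reason $\vecu$ appears on the right-hand side, is that $\vecv_N$ is coupled to $\vecu_N$ through the relation $\lambda\vecu_N=-\frac{m}{1+m}\vecv_N-\vecf_N$ of the preceding lemma (equivalently (\ref{VectorialPotentialsutov})), so the boundary symbol for $\vecv_N$ is not simply $|\xi'|^2$-like but is shifted; this is where the condition $R_0(x,\xi')-\frac{1+m}{2+m}\mu\neq 0$ enters, guaranteeing that the resulting boundary operator for $\vecv_N$ is elliptic (invertible as a semiclassical $\Psi$DO on $\Gamma$) so that one may write $\vecv_N$ in terms of a $\vecg_5$-type boundary source. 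I would therefore: (i) write the transmission/jump conditions coming from (\ref{1stRegularityPDE2}) restricted to $\Gamma$, substitute $\vecu_N$ in terms of $\vecv_N$ and $\vecf_N$, collect the coefficient of $\vecv_N$ and check it equals (a symbol proportional to) $R_0-\frac{1+m}{2+m}\mu$ up to lower-order semiclassical terms; (ii) invert this elliptic boundary operator modulo $O(h)$ smoothing, picking up the stated powers of $h$; (iii) estimate the remaining domain source terms by applying the parametrix $Q$ of $-h^2\Delta-\mu$ together with Lemma~\ref{PreliminaryResultsQNablau} to handle the $\nabla_h\underline{\mdiv(\cdot)}$ contributions, which is what produces the $\|\mdiv\vecf\|_{L^2(D)}$, $\|\mdiv\vecg\|_{\vecL^2(D)}$ and $\|\mdiv\vecu\|_{L^2(D)}$ terms.

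Once the elliptic inversion on $\Gamma$ is in place, the bookkeeping is routine: each domain term $h^k(\text{source})$ that gets traced to $\Gamma$ loses a factor $h^{-1/2}$ in passing to $\vecH^{-1/2}_{sc}(\Gamma)$ (this accounts for the asymmetric powers, e.g.\ $h^2\|\vecf\|\rightsquigarrow h^{-1/2}\|\vecf\|$ after two more $h^{-1}$ from the inversion and renormalization, and $h^2\|\vecg\|$ improved to $h^{3/2}\|\vecg\|$ because $\vecg$ enters with an extra power of $h$), and each boundary layer $\frac hi(\cdots)\otimes\delta_{s=0}$ or $\otimes D^h_s\delta_{s=0}$ produces the $O(h)$ coefficients on $|\vecJ(\vecv_T)|_{\vecH^{-3/2}_{sc}(\Gamma)}$ and $|\gamma\vecv|_{\vecH^{-1/2}_{sc}(\Gamma)}$. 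I would defer the precise arithmetic of these powers to the Calculation subsection \ref{RegularityCalculation}, mirroring how Lemma~\ref{1stRegularityuLemma} is proved.

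The main obstacle I expect is step (ii): verifying that the boundary symbol obtained after eliminating $\vecu_N$ really is elliptic under the hypothesis $R_0(x,\xi')-\frac{1+m}{2+m}\mu\neq 0$, and in particular that the substitution $\vecu_N=-h^2\frac{m}{\mu(1+m)}\vecv_N-h^2\frac1\mu\vecf_N$ does not destroy the semiclassical order structure. One must be careful that the $h^2$ in front of $\vecu_N$ is exactly what demotes the $\vecu_N$-contribution to a lower-order perturbation relative to the leading boundary operator for $\vecv_N$, so that the factor $\frac{1+m}{2+m}$ in the non-characteristic condition emerges correctly (it is the ``effective contrast'' seen by the trace of $\vecv$ after the $\vecu$-coupling is resolved). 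Everything else — the parametrix estimates, the trace losses, the treatment of the divergence terms — follows the template of \cite{R1} and of Lemma~\ref{1stRegularityuLemma} with only notational changes.
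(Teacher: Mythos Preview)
Your outline has the right overall spirit but contains a structural gap. Equation (\ref{1stRegularityPDE2}) for $\uvecv$ does not involve $\vecu$ at all, so when you take its normal trace you obtain only a \emph{single} relation between $\vecv_N$ and $\mdiv^h_\Gamma\vecv_T$ (this is Step~1 of the paper's proof, giving $-\nu\,\mdiv^h_\Gamma\vecv_T + \mop(\rho_2)\vecv_N = \vecg_1$). There is nothing in that equation into which to ``substitute $\vecu_N$ in terms of $\vecv_N$ and $\vecf_N$'', and from this single relation you cannot isolate $\vecv_N$ because the tangential divergence is an independent unknown. The shift to $R_0-\frac{1+m}{2+m}\mu$ does not arise from the $\vecv$-equation alone.

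What the paper actually does---and what your plan omits---is to bring in the $\vecu$-equation (\ref{1stRegularityPDE1}) as well. One applies the parametrix $Q$ of $-h^2\Delta-\mu(1+m)$ (not of $-h^2\Delta-\mu$, which is $\tilde Q$), substitutes the parametrix representation (\ref{Section2SubStep1Eqnv}) for $\uvecv$, and takes the normal trace to produce a \emph{second} boundary relation (equation (\ref{Section2Substep2Eqnu})) linking $\vecu_N$, $\vecv_N$ and $\mdiv^h_\Gamma\vecv_T$ through the composite symbols of $Qm\tilde Q$ recorded in (\ref{PreliminaryResultsQmTildeQBoundary1})--(\ref{PreliminaryResultsQmTildeQBoundary2}). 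Only after combining this with Step~1 and the algebraic identity (\ref{VectorialPotentialsutov}) can both $\vecu_N$ and $\mdiv^h_\Gamma\vecv_T$ be eliminated, yielding $\mop\bigl(m((m+2)R_0-(1+m)\mu)\bigr)\vecv_N = h^{-2}\mop(r_2)\vecg_3 + h\,\mop(r_1)\vecv_N$. The factor $\frac{1+m}{2+m}$ therefore emerges from the interplay of the two parametrices $Q$ and $\tilde Q$, not from a shift of a single boundary symbol. This is also exactly where the $h^{-3/2}\|\vecu\|_{\vecL^2(D)}$ and $h^{-3/2}\|\mdiv\vecu\|_{L^2(D)}$ terms in the final estimate come from---they are produced by the $\vecg_2$ remainder from the trace of the $\vecu$-equation---and your single-equation route has no mechanism to generate them.
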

\begin{proof}
The idea is to derive an equation for $\vecv_N$, which we will do in Steps 1, 2, and 3. In Step 4, we then derive an a priori estimate for $\vecv_N$.

\medskip

\textit{Step 1}: Relating $\vecv_N$ to $\mdiv\!^h_\Gamma \vecv_\Gamma$.

\medskip

From the Appendix, $\tilde{Q}$ is a parametrix of $-h^2\Delta -\mu$. Then applying $\tilde{Q}$ to equation (\ref{1stRegularityPDE2}) we have that
\begin{eqnarray} \label{Section2SubStep1Eqnv}
\uvecv &=& h K_{-M} \uvecv + h^2 \tilde{Q} \underline{\vecg}  -  h^3\tilde{Q} (\frac{1}{i \mu} \nabla_h \underline{\mdiv \vecg})  \nonumber \\
&+& \tilde{Q}[\frac{h}{i} \vecJ(\vecv_T) \otimes \delta_{s=0} ] + \tilde{Q} [\frac{h}{i}\vecv \otimes D^h_s \delta_{s=0}] .
\end{eqnarray}
Taking the traces on the boundary $\Gamma$ and a direct calculation (see the Calculation subsection \ref{RegularityCalculation}) yields
\begin{eqnarray}
- \nu \mdiv\!^h_\Gamma \vecv_T + \mop(\rho_2)\vecv_N&=& \mop(r_{1})\left( h \gamma_N K_{-M} \uvecv + h^2 \gamma_N \tilde{Q} \underline{\vecg} -  h^3\gamma_N\tilde{Q} (\frac{1}{i \mu} \nabla_h \underline{\mdiv \vecg})  \right) \nonumber\\
&+& h\mop(r_{-1}) \vecJ(\vecv_T) + h\mop(r_{0}) \vecv \nonumber \\
&+& h\mop(r_{-1}) (-\nu \mdiv\!^h_\Gamma \vecv_T) + h\mop(r_{0}) \vecv_N \nonumber \\
&:=& \vecg_1 \label{Section2Substep1Eqnv}
\end{eqnarray}
where we denote the right hand side as $\vecg_1$. 

\medskip

{\it Step 2}. Relating $\vecu_N$ to $\vecv_N$. 

\medskip

Using a similar argument as in Step 1 (see the Calculation subsection \ref{RegularityCalculation}) yields
\begin{eqnarray}
\vecu_N&=& h \gamma_N K_{-M} \uvecu + h^3 \gamma_N Q(m K_{-M}\uvecv) + h^4 \gamma_N Qm\tilde{Q} \underline{\vecg} - h^5\gamma_N Qm\tilde{Q} (\frac{1}{i \mu} \nabla_h \underline{\mdiv \vecg})  \nonumber \\
&+& h^2 \gamma_N Q((1+m)\underline{\vecf}) + \frac{h}{i}\gamma_N Q( \nabla_h \underline{\mdiv \vecu}) \nonumber \\
&+& h^2 \mop \left( \frac{m(\rho_2-\rho_1+\lambda_2-\lambda_1)}{(\lambda_1-\lambda_2)(\lambda_1-\rho_2)(\rho_1-\lambda_2)(\rho_1-\rho_2)} \right) (-\nu \mdiv\!^h_\Gamma \vecv_T) \nonumber \\
 &+&  h^2 \mop \left( \frac{m(\rho_2\lambda_2-\rho_1\lambda_1)}{(\lambda_1-\lambda_2)(\lambda_1-\rho_2)(\rho_1-\lambda_2)(\rho_1-\rho_2)}\right) \vecv_N + \mop(\frac{\lambda_1}{\lambda_1-\lambda_2} ) \vecu_N \nonumber \\
&+& h^3 \mop(r_{-4}) \vecJ(\vecv_T) + h^3 \mop(r_{-3})\vecv+h \mop(r_{-1})\vecu_N + h \mop(r_{-2})\nabla_\Gamma^h(\vecu_N \cdot \nu) \nonumber \\
&:=& h^2 \mop \left( \frac{m(\rho_2-\rho_1+\lambda_2-\lambda_1)}{(\lambda_1-\lambda_2)(\lambda_1-\rho_2)(\rho_1-\lambda_2)(\rho_1-\rho_2)} \right) (-\nu \mdiv\!^h_\Gamma \vecv_T)  \nonumber \\
 &+&  h^2 \mop \left( \frac{m(\rho_2\lambda_2-\rho_1\lambda_1)}{(\lambda_1-\lambda_2)(\lambda_1-\rho_2)(\rho_1-\lambda_2)(\rho_1-\rho_2)} \right)\vecv_N + \mop(\frac{\lambda_1}{\lambda_1-\lambda_2})\vecu_N + \vecg_2 . \label{Section2Substep2Eqnu}
\end{eqnarray}

{\it Step 3}. Derive an equation for $\vecv_N$. 

\medskip

From equation (\ref{VectorialPotentialsutov}) $\vecu_N=-h^2 \frac{m}{\mu(1+m)}\vecv_N - h^2 \frac{1}{\mu} \vecf_N$. Then, combining this with equations (\ref{Section2Substep1Eqnv}) and (\ref{Section2Substep2Eqnu}) yields
\begin{eqnarray*}
&& -h^2 \frac{m}{\mu(1+m)}\vecv_N - h^2 \frac{1}{\mu} \vecf_N \\
=&& h^2 \mop(\frac{m(\rho_2\lambda_2-\rho_1\lambda_1)}{(\lambda_1-\lambda_2)(\lambda_1-\rho_2)(\rho_1-\lambda_2)(\rho_1-\rho_2)})\vecv_N + \mop(\frac{\lambda_1}{\lambda_1-\lambda_2})(-h^2 \frac{m}{\mu(1+m)}\vecv_N - h^2 \frac{1}{\mu} \vecf_N) \\
+&& h^2 \mop(\frac{m(\rho_2-\rho_1+\lambda_2-\lambda_1)}{(\lambda_1-\lambda_2)(\lambda_1-\rho_2)(\rho_1-\lambda_2)(\rho_1-\rho_2)}) (-\mop(\rho_2)\vecv_N+ \vecg_1) +\vecg_2 .
\end{eqnarray*}
Hence
\begin{eqnarray*}
&& h^2 \mop\left(  -\frac{m}{\mu(1+m)} -\frac{m(\rho_2\lambda_2-\rho_1\lambda_1)-m\rho_2(\rho_2-\rho_1+\lambda_2-\lambda_1)}{(\lambda_1-\lambda_2)(\lambda_1-\rho_2)(\rho_1-\lambda_2)(\rho_1-\rho_2)} + \frac{m}{\mu(1+m)} \frac{\lambda_1}{\lambda_1-\lambda_2}  \right) \vecv_N\\
=&& h^2 \mop(r_{0}) \vecf_N + \vecg_2+ h^2\mop(r_{-3})\vecg_1 := \vecg_3 .
\end{eqnarray*}

{\it Step 4}. Getting an a priori estimate for $\vecv_N$.

\medskip

From equations (\ref{PreliminaryResultsPolynomial1}) and (\ref{PreliminaryResultsPolynomial2}) we have $\lambda_1=-\lambda_2$, $\rho_1=-\rho_2$, $-\lambda_2^2=R-\mu(1+m)$ and $-\rho_2^2=R-\mu$. Then a direct calculation yields
\begin{eqnarray*}
&&-\frac{m}{\mu(1+m)} -\frac{m(\rho_2\lambda_2-\rho_1\lambda_1)-m\rho_2(\rho_2-\rho_1+\lambda_2-\lambda_1)}{(\lambda_1-\lambda_2)(\lambda_1-\rho_2)(\rho_1-\lambda_2)(\rho_1-\rho_2)} + \frac{m}{\mu(1+m)} \frac{\lambda_1}{\lambda_1-\lambda_2} \\
=&& \frac{1}{2(1+m)\mu} \frac{\lambda_2-(1+m)\rho_2}{\lambda_2}.
\end{eqnarray*}
Then 
\begin{eqnarray*}
\mop(\lambda^2_2-(1+m)^2\rho^2_2) \vecv_N = h^{-2} \mop(2(1+m)\mu \lambda_2(\lambda_2+(1+m)\rho_2))\vecg_3 + h\mop(r_{1})\vecv_N,
\end{eqnarray*}
which implies that
\begin{eqnarray*}
\mop\left( m((m+2)R-(1+m)\mu) \right) \vecv_N = h^{-2} \mop(r_2)\vecg_3 + h\mop(r_{1})\vecv_N .
\end{eqnarray*}
Let $R_0(x,\xi')$ be the principal symbol of $R(x,\xi')$, see also Appendix \ref{PreliminaryResultsPrincipleSemiclassicalSymbol}. Then
\begin{eqnarray*}
\mop\left( m((m+2)R_0-(1+m)\mu) \right) \vecv_N = h^{-2} \mop(r_2)\vecg_3 + h\mop(r_{1})\vecv_N .
\end{eqnarray*}
Note that
\begin{eqnarray} \label{RegularityAssumptionBoundary}
(m+2)R_0-(1+m)\mu \not =0
\end{eqnarray}
for any $\xi'$ and $x \in \Gamma$. Then there exists a parametrix of $(m+2)R_0-(1+m)\mu$ and consequently
\begin{eqnarray*}
&&|\vecv_N|_{\vecH^{-\frac{1}{2}}_{sc}(\Gamma)} \\
&\lesssim& h^{-2}|\vecg_3|_{\vecH^{-\frac{1}{2}}_{sc}(\Gamma)} + h|\vecv_N|_{\vecH^{-\frac{3}{2}}_{sc}(\Gamma)} \\
&\lesssim&  |\vecf_N|_{\vecH^{-\frac{1}{2}}_{sc}(\Gamma)} + h^{-2}|\vecg_2|_{\vecH^{-\frac{1}{2}}_{sc}(\Gamma)}+ |\vecg_1|_{\vecH^{-\frac{1}{2}}_{sc}(\Gamma)} +h|\vecv_N|_{\vecH^{-\frac{3}{2}}_{sc}(\Gamma)}
\end{eqnarray*}
A direct calculation (see the Calculation subsection \ref{RegularityCalculation}) yields the lemma.
\end{proof} \proofend

Now Lemma \ref{1stRegularityuLemma} and Lemma \ref{1stRegularityvNLemma} now yield the following.
\begin{lemma}  \label{1stRegularityvNuLemma}
Assume that assumption \ref{Assumptionmconstant} holds. Assume in addition that $|\xi|^2-\mu \not = 0$ , $|\xi|^2- (1+m)\mu \not = 0$ for any $\xi$ and $x \in \overline{D}$, and $R_0(x,\xi')-\frac{1+m}{2+m}\mu \not =0 $ for any $\xi'$ and $x \in \Gamma$.  Then for sufficiently small $h$
\begin{eqnarray} \label{Regularity1Step3vN}
|\vecv_N|_{\vecH^{-\frac{1}{2}}_{sc}(\Gamma)} &\lesssim & h^{\frac{3}{2}} \|\vecg\|_{\vecL^2(D)} + h^{\frac{5}{2}}  \|\mdiv \vecg\|_{L^2(D)} + h^{-\frac{1}{2}} \|\vecf\|_{\vecL^2(D)} \nonumber \\
&+& h^{\frac{1}{2}}\|\vecv\|_{\vecL^2(D)} + h^{-\frac{1}{2}}\|\mdiv \vecf\|_{L^2(D)} + h |\vecJ(\vecv_T)|_{\vecH^{-\frac{3}{2}}_{sc}(\Gamma)} + h |\gamma \vecv|_{\vecH^{-\frac{1}{2}}_{sc}(\Gamma)} .
\end{eqnarray}
and
\begin{eqnarray} \label{Regularity1Step3u}
\|\vecu\|_{\vecL^2(D)} &\lesssim& h^2 \|\vecv\|_{\vecL^2(D)}+h^2 \|\vecf\|_{\vecL^2(D)}+h^4 \|\vecg\|_{\vecL^2(D)}+ h^5 \|\mdiv \vecg\|_{L^2(D)}  \nonumber \\
&+&h^2 \|\mdiv \vecf\|_{L^2(D)}  +h^{\frac{7}{2}} |\vecJ(\vecv_T)|_{\vecH^{-\frac{3}{2}}_{sc}(\Gamma)} + h^{\frac{7}{2}} |\gamma \vecv|_{\vecH^{-\frac{1}{2}}_{sc}(\Gamma)} .
\end{eqnarray}
\end{lemma}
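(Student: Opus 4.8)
The statement merely combines Lemma \ref{1stRegularityuLemma} and Lemma \ref{1stRegularityvNLemma}, the whole point being to eliminate the two quantities $\|\vecu\|_{\vecL^2(D)}$ and $\|\mdiv\vecu\|_{L^2(D)}$ from the right-hand side of (\ref{RegularityvNormal}). I would first record the elementary divergence bound for $\vecu$ that is already used inside the proof of Lemma \ref{1stRegularityuLemma}: taking the divergence of (\ref{FormulationEu}) and of (\ref{FormulationEv}), expanding $\mdiv$ of the products $(1+m)\vecu$, $m\vecv$, $(1+m)\vecf$, using that $1+m$ is bounded away from zero and $\lambda^{-1}=h^2\mu^{-1}$, one obtains
\begin{eqnarray*}
\|\mdiv\vecu\|_{L^2(D)}\lesssim \|\vecu\|_{\vecL^2(D)}+h^2\|\vecv\|_{\vecL^2(D)}+h^2\|\vecf\|_{\vecL^2(D)}+h^2\|\mdiv\vecf\|_{L^2(D)}+h^4\|\mdiv\vecg\|_{L^2(D)}.
\end{eqnarray*}
Thus, up to lower-order terms in $h$, the terms $h^{-3/2}\|\vecu\|_{\vecL^2(D)}$ and $h^{-3/2}\|\mdiv\vecu\|_{L^2(D)}$ appearing in (\ref{RegularityvNormal}) are both majorized by $h^{-3/2}\|\vecu\|_{\vecL^2(D)}$.

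Next I would substitute the estimate (\ref{Regularityu}) of Lemma \ref{1stRegularityuLemma} for $\|\vecu\|_{\vecL^2(D)}$ into (\ref{RegularityvNormal}) (together with the divergence bound above). The key feature of (\ref{Regularityu}) is that it involves only $\vecv,\vecf,\vecg$ and the boundary term $|\vecv_N|_{\vecH^{-\frac{1}{2}}_{sc}(\Gamma)}$, and that the latter comes with the factor $h^{5/2}$. Hence the substitution generates on the right-hand side the term $h^{-3/2}\cdot h^{5/2}|\vecv_N|_{\vecH^{-\frac{1}{2}}_{sc}(\Gamma)}=h\,|\vecv_N|_{\vecH^{-\frac{1}{2}}_{sc}(\Gamma)}$, which for $h$ small enough is absorbed into the left-hand side. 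Keeping track of the surviving powers of $h$ — the dominant contributions being $h^{3/2}\|\vecg\|_{\vecL^2(D)}$, $h^{5/2}\|\mdiv\vecg\|_{L^2(D)}$, $h^{-1/2}\|\vecf\|_{\vecL^2(D)}$, $h^{-1/2}\|\mdiv\vecf\|_{L^2(D)}$, $h^{1/2}\|\vecv\|_{\vecL^2(D)}$, $h|\vecJ(\vecv_T)|_{\vecH^{-\frac{3}{2}}_{sc}(\Gamma)}$ and $h|\gamma\vecv|_{\vecH^{-\frac{1}{2}}_{sc}(\Gamma)}$ — yields (\ref{Regularity1Step3vN}).

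Finally, I would feed the freshly obtained (\ref{Regularity1Step3vN}) back into (\ref{Regularityu}): the boundary contribution $h^{5/2}|\vecv_N|_{\vecH^{-\frac{1}{2}}_{sc}(\Gamma)}$ then produces the terms $h^4\|\vecg\|_{\vecL^2(D)}$, $h^5\|\mdiv\vecg\|_{L^2(D)}$, $h^2\|\vecf\|_{\vecL^2(D)}$, $h^2\|\mdiv\vecf\|_{L^2(D)}$, $h^3\|\vecv\|_{\vecL^2(D)}$, $h^{7/2}|\vecJ(\vecv_T)|_{\vecH^{-\frac{3}{2}}_{sc}(\Gamma)}$ and $h^{7/2}|\gamma\vecv|_{\vecH^{-\frac{1}{2}}_{sc}(\Gamma)}$; absorbing $h^3\|\vecv\|_{\vecL^2(D)}$ into $h^2\|\vecv\|_{\vecL^2(D)}$ gives (\ref{Regularity1Step3u}). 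The only step with actual content is the absorption of $|\vecv_N|_{\vecH^{-\frac{1}{2}}_{sc}(\Gamma)}$ into the left-hand side, which works precisely because the $h^{5/2}$ gain in the boundary term of (\ref{Regularityu}) strictly beats the $h^{-3/2}$ loss in (\ref{RegularityvNormal}); everything else is a comparison of powers of $h$ with implied constants independent of $h$ for $h$ small.
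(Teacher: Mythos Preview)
Your proposal is correct and follows essentially the same route as the paper: substitute the divergence bound for $\vecu$ (which is the paper's estimate (\ref{RegularityDivu})) together with (\ref{Regularityu}) into (\ref{RegularityvNormal}), absorb the resulting $h|\vecv_N|_{\vecH^{-1/2}_{sc}(\Gamma)}$ term into the left-hand side to obtain (\ref{Regularity1Step3vN}), and then feed (\ref{Regularity1Step3vN}) back into (\ref{Regularityu}) to get (\ref{Regularity1Step3u}). The only cosmetic difference is that you rederive the $\|\mdiv\vecu\|_{L^2(D)}$ bound from scratch rather than citing it, and your version carries $h^2\|\vecv\|_{\vecL^2(D)}$ instead of the paper's sharper $h^3\|\vecv\|_{\vecL^2(D)}$ (coming from interior regularity of $\nabla m\cdot\vecv$), but this makes no difference to the outcome.
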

\begin{proof}
The assumptions in Lemma \ref{1stRegularityuLemma} and Lemma \ref{1stRegularityvNLemma} are satisfied. Therefore we substitue estimates (\ref{Regularityu}) and (\ref{RegularityDivu}) into estimate (\ref{RegularityvNormal}) to get
\begin{eqnarray*}
|\vecv_N|_{\vecH^{-\frac{1}{2}}_{sc}(\Gamma)} &\lesssim & h^{\frac{3}{2}} \|\vecg\|_{\vecL^2(D)} +  h^{\frac{5}{2}}  \|\mdiv \vecg\|_{L^2(D)}  + h^{-\frac{1}{2}} \|\vecf\|_{\vecL^2(D)} + h^{\frac{1}{2}}\|\vecv\|_{\vecL^2(D)} \\
&+& h^{-\frac{1}{2}}\|\mdiv \vecf\|_{L^2(D)} + h|\vecv_N|_{\vecH^{-\frac{1}{2}}_{sc}(\Gamma)} + h |\vecJ(\vecv_T)|_{\vecH^{-\frac{3}{2}}_{sc}(\Gamma)} + h |\gamma \vecv|_{\vecH^{-\frac{1}{2}}_{sc}(\Gamma)} .
\end{eqnarray*}
Since $\vecv_N \in \vecH^{-\frac{1}{2}}_{sc}(\Gamma)$, for $h$ small enough we get estimate (\ref{Regularity1Step3vN}). Inequality (\ref{Regularityu}) then yields estimate (\ref{Regularity1Step3u}). This proves the lemma.
\end{proof} \proofend

\begin{lemma}  \label{1stRegularityuNLemma}
Assume that assumption \ref{Assumptionmconstant} holds. Assume in addition that $|\xi|^2-\mu \not = 0$ , $|\xi|^2- (1+m)\mu \not = 0$ for any $\xi$ and $x \in \overline{D}$, and $R_0(x,\xi')-\frac{1+m}{2+m}\mu \not =0 $ for any $\xi'$ and $x \in \Gamma$.  Then for sufficiently small $h$
\begin{eqnarray} \label{Section2Step4Eqnu}
|\vecu_N|_{\vecH^{\frac{3}{2}}_{sc}(\Gamma)}  &\lesssim & h^{\frac{7}{2}} \|\vecg\|_{\vecL^2(D)} +h^{\frac{9}{2}}  \|\mdiv \vecg\|_{\scH{1}(D)} + h^{\frac{3}{2}} \|\vecf\|_{\vecL^2(D)} + h^{\frac{5}{2}}\|\vecv\|_{\vecL^2(D)} \nonumber \\
&+& h^{\frac{3}{2}}\|\mdiv \left( (1+m)\vecf \right) \|_{\scH{1}(D)} + h^3 |\vecJ(\vecv_T)|_{\vecH^{-\frac{3}{2}}_{sc}(\Gamma)} + h^3 |\gamma \vecv|_{\vecH^{-\frac{1}{2}}_{sc}(\Gamma)} .
\end{eqnarray}
\end{lemma}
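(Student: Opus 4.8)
The plan is to extract a closed identity for $\vecu_N$ by combining the boundary relations already derived in the proof of Lemma \ref{1stRegularityvNLemma}, and then to estimate its right-hand side using Lemma \ref{1stRegularityvNuLemma}, whose hypotheses are exactly those assumed here. I would start from equation (\ref{Section2Substep2Eqnu}), which expresses $\vecu_N$ through $-\nu\mdiv^h_\Gamma\vecv_T$, $\vecv_N$, the volume data, the boundary quantities $\vecJ(\vecv_T)$ and $\gamma\vecv$, and $\vecu_N$ itself. Since $\lambda_1=-\lambda_2$ (noted in Step 4 of Lemma \ref{1stRegularityvNLemma}), the self-term satisfies $\mop(\tfrac{\lambda_1}{\lambda_1-\lambda_2})=\tfrac12 I$; moving it to the left together with the genuinely lower-order self-terms $h\,\mop(r_{-1})\vecu_N$ and $h\,\mop(r_{-2})\nabla_\Gamma^h(\vecu_N\cdot\nu)$ hidden in $\vecg_2$ (each controlled by $h\,|\vecu_N|_{\vecH^{\frac{3}{2}}_{sc}(\Gamma)}$) and inverting for $h$ small, one isolates $\vecu_N$.

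Next I would substitute $-\nu\mdiv^h_\Gamma\vecv_T=-\mop(\rho_2)\vecv_N+\vecg_1$ from (\ref{Section2Substep1Eqnv}). The point that makes the lemma work is a symbol-order count: with $\lambda_1=-\lambda_2$ and $\rho_1=-\rho_2$ the symbol multiplying $\vecv_N$ directly in (\ref{Section2Substep2Eqnu}) vanishes at leading order, whereas the symbol multiplying $-\nu\mdiv^h_\Gamma\vecv_T$ there is of order $-3$; composed with $\mop(\rho_2)$, of order $1$, it becomes an operator of order $-2$ carrying a prefactor $h^2$. Thus $h^2\,\mop(\text{order }-2)$ maps $\vecH^{-\frac{1}{2}}_{sc}(\Gamma)$ into $\vecH^{\frac{3}{2}}_{sc}(\Gamma)$, which is precisely the two-derivative gain that turns the $\vecH^{-\frac{1}{2}}_{sc}$-control of $\vecv_N$ from Lemma \ref{1stRegularityvNuLemma} into $\vecH^{\frac{3}{2}}_{sc}$-control of $\vecu_N$. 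Collecting terms one obtains
$$
|\vecu_N|_{\vecH^{\frac{3}{2}}_{sc}(\Gamma)} \lesssim h^2 |\vecv_N|_{\vecH^{-\frac{1}{2}}_{sc}(\Gamma)} + h^2|\vecg_1|_{\vecH^{-\frac{1}{2}}_{sc}(\Gamma)} + |\vecg_2|_{\vecH^{\frac{3}{2}}_{sc}(\Gamma)} + h^3|\vecJ(\vecv_T)|_{\vecH^{-\frac{3}{2}}_{sc}(\Gamma)} + h^3|\gamma\vecv|_{\vecH^{-\frac{1}{2}}_{sc}(\Gamma)} + h\,|\vecu_N|_{\vecH^{\frac{3}{2}}_{sc}(\Gamma)} .
$$

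It then remains to estimate $\vecg_1$ and $\vecg_2$ in the indicated semiclassical norms, which I would do exactly as in Lemmas \ref{1stRegularityuLemma}--\ref{1stRegularityvNLemma}, using the appendix bounds for the parametrices $Q,\tilde Q$ (boundary-to-domain, domain-to-boundary and semiclassical trace estimates) and the smallness of the remainder $K_{-M}$: the $\uvecu$- and $\underline{\mdiv\vecu}$-contributions would be re-expressed through the estimate (\ref{Regularity1Step3u}) of Lemma \ref{1stRegularityvNuLemma} and the companion bound (\ref{RegularityDivu}) on $\|\mdiv\vecu\|_{L^2(D)}$; the $\vecv$-contributions through $\|\vecv\|_{\vecL^2(D)}$ and $|\gamma\vecv|_{\vecH^{-\frac{1}{2}}_{sc}(\Gamma)}$; and the data contributions through $\|\vecf\|_{\vecL^2(D)}$, $\|\vecg\|_{\vecL^2(D)}$, $\|\mdiv\big((1+m)\vecf\big)\|_{\scH{1}(D)}$ and $\|\mdiv\vecg\|_{\scH{1}(D)}$, invoking as in Section \ref{Formulation} that $\vecv$ satisfies a vectorial Helmholtz-type equation (so $\Delta\vecv\in\vecL^2(D)$) and that $\nabla m$ is compactly supported to reach the required order of regularity. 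Absorbing the last term of the displayed inequality for $h$ small, and then inserting $h^2$ times the estimate (\ref{Regularity1Step3vN}) for $|\vecv_N|_{\vecH^{-\frac{1}{2}}_{sc}(\Gamma)}$, yields exactly (\ref{Section2Step4Eqnu}).

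The main obstacle is not the algebra of eliminating the self-referential $\vecu_N$-terms, which is routine once $\lambda_1=-\lambda_2$ is used, but the careful symbol-order bookkeeping guaranteeing the $h^2$/two-derivative gain, together with the precise accounting of the powers of $h$ entering $\vecg_1$ and $\vecg_2$ so that the exponents come out exactly as stated (for instance $h^{7/2}$ on $\|\vecg\|_{\vecL^2(D)}$, $h^{3/2}$ on $\|\vecf\|_{\vecL^2(D)}$ and on $\|\mdiv((1+m)\vecf)\|_{\scH{1}(D)}$); these computations I would relegate to the Calculation subsection \ref{RegularityCalculation}.
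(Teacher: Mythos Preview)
Your approach is essentially the paper's proof. The paper also starts from (\ref{Section2Substep2Eqnu}), isolates $\vecu_N$ by moving the $\mop(\tfrac{\lambda_1}{\lambda_1-\lambda_2})\vecu_N$ self-term to the left (it phrases this as applying $\mop(\lambda_2-\lambda_1)$ and then inverting the elliptic order-one symbol $\lambda_2$, which is algebraically equivalent to your direct use of $\tfrac{\lambda_1}{\lambda_1-\lambda_2}=\tfrac12$ since $\lambda_1=-\lambda_2$), substitutes (\ref{Section2Substep1Eqnv}) to replace $-\nu\,\mdiv^h_\Gamma\vecv_T$, and arrives at exactly the displayed inequality
\[
|\vecu_N|_{\vecH^{3/2}_{sc}(\Gamma)} \;\lesssim\; h^2|\vecv_N|_{\vecH^{-1/2}_{sc}(\Gamma)} + h^2|\vecg_1|_{\vecH^{-1/2}_{sc}(\Gamma)} + |\vecg_2|_{\vecH^{3/2}_{sc}(\Gamma)},
\]
which is then fed into the Calculation subsection together with (\ref{Regularity1Step3vN}) and (\ref{Regularity1Step3u}).

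One small correction: when you bound $|\vecg_2|_{\vecH^{3/2}_{sc}(\Gamma)}$, the term $h\,\gamma_N Q\nabla_h\underline{\mdiv\vecu}$ requires control of $\|\mdiv\vecu\|_{\scH{1}(D)}$, not merely $\|\mdiv\vecu\|_{L^2(D)}$; so you should invoke the $H^1$-version of the divergence bound (derived from (\ref{Section2Step2Divu}) exactly as for (\ref{RegularityDivu}), using that $\nabla m$ has compact support and the interior regularity of $\vecv$) rather than (\ref{RegularityDivu}) itself. With that adjustment the argument goes through as you describe.
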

\begin{proof}
From equation (\ref{Section2Substep2Eqnu}) we have
\begin{eqnarray*}
\mop(\frac{\lambda_2}{\lambda_2-\lambda_1})\vecu_N&=& h^2 \mop(\frac{m(\rho_2-\rho_1+\lambda_2-\lambda_1)}{(\lambda_1-\lambda_2)(\lambda_1-\rho_2)(\rho_1-\lambda_2)(\rho_1-\rho_2)}) (-\nu \mdiv\!^h_\Gamma \vecv_T)   \\
 &+&  h^2 \mop(\frac{m(\rho_2\lambda_2-\rho_1\lambda_1)}{(\lambda_1-\lambda_2)(\lambda_1-\rho_2)(\rho_1-\lambda_2)(\rho_1-\rho_2)})\vecv_N  + \vecg_2 .
\end{eqnarray*}
Applying $\lambda_2-\lambda_1$ to both sides and combining this with equation (\ref{Section2Substep1Eqnv}) yields
\begin{eqnarray*}
\mop(\lambda_2)\vecu_N&=& h^2 \mop(r_{-2}) (-\mop(\rho_2)\vecv_N+\vecg_1)   \\
 &+&  h^2 \mop(r_{-1})\vecv_N  + \mop(r_{1})\vecg_2 + h\mop(r_{0}) \vecu_N .
\end{eqnarray*}
Since $\lambda_2 \not= 0$, for small enough $h$ we have that
\begin{eqnarray*}
|\vecu_N|_{\vecH^{\frac{3}{2}}_{sc}(\Gamma)} &\lesssim& h^2 |\vecv_N|_{\vecH^{-\frac{1}{2}}_{sc}(\Gamma)} +  h^2|\vecg_1|_{\vecH^{-\frac{1}{2}}_{sc}(\Gamma)} + |\vecg_2|_{\vecH^{\frac{3}{2}}_{sc}(\Gamma)} .
\end{eqnarray*}
Then a direct calculation (see the Calculation subsection \ref{RegularityCalculation}) yields the lemma.
\end{proof} \proofend

Now we are ready to prove the main theorem.
\begin{theorem} \label{Regularity1} 
Assume that assumption \ref{Assumptionmconstant} holds. Assume in addition that $|\xi|^2-\mu \not = 0$ , $|\xi|^2- (1+m)\mu \not = 0$ for any $\xi$ and $x \in \overline{D}$ and $R_0(x,\xi')-\frac{1+m}{2+m}\mu \not =0 $ for any $\xi'$ and $x \in \Gamma$.  Then for sufficiently small $h$
\begin{eqnarray*}
&&\|\vecv\|_{\vecL^2(D)} \lesssim h^2 \|\vecg\|_{\vecL^2(D)}+h^3 \|\mdiv \vecg\|_{\scH{1}(D)} +  \|\vecf\|_{\vecL^2(D)} +  \|\mdiv \left( (1+m)\vecf \right) \|_{\scH{1}(D)} , \\
&&\|\vecu\|_{\scH{2}(D)} \lesssim h^2 \|\vecf\|_{\vecL^2(D)} +  h^4\|\vecg\|_{\vecL^2(D)} + h^5 \|\mdiv \vecg\|_{\scH{1}(D)}  + h^2 \|\mdiv \left( (1+m)\vecf \right) \|_{\scH{1}(D)}.
\end{eqnarray*}
\end{theorem}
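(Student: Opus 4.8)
The plan is to complement Lemmas \ref{1stRegularityuLemma}--\ref{1stRegularityuNLemma} with two further estimates drawn directly from the parametrix representations, and then to close the resulting chain of inequalities by absorbing all self-referential terms for $h$ small. The first extra estimate is an $\vecL^2(D)$-bound for $\vecv$: starting from the representation (\ref{Section2SubStep1Eqnv}) (obtained by applying the parametrix $\tilde Q$ of $-h^2\Delta-\mu$ to (\ref{1stRegularityPDE2})), taking the $\vecL^2(D)$-norm, and using the mapping properties of $\tilde Q$ together with the boundary-to-domain estimate (\ref{PreliminaryResultsBounaryToDomain}) and its analogue for $D_s^h\delta_{s=0}$, one obtains after absorbing the $hK_{-M}\uvecv$ term an inequality of the form
\begin{equation*}
\|\vecv\|_{\vecL^2(D)}\ \lesssim\ h^2\|\vecg\|_{\vecL^2(D)}+h^3\|\mdiv\vecg\|_{L^2(D)}+h^{\frac12}|\vecJ(\vecv_T)|_{\vecH^{-\frac32}_{sc}(\Gamma)}+h^{\frac12}|\gamma\vecv|_{\vecH^{-\frac12}_{sc}(\Gamma)},
\end{equation*}
which is the estimate referred to as (\ref{Section2Step4Estimatev}). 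Taking instead the tangential trace of (\ref{Section2SubStep1Eqnv}) and using the trace bounds for $\tilde Q$ applied to surface distributions gives, after absorbing its $O(h)$ self-term, a bound for $|\gamma\vecv|_{\vecH^{-\frac12}_{sc}(\Gamma)}$ by the volume data plus controlled multiples of $|\vecv_N|_{\vecH^{-\frac12}_{sc}(\Gamma)}$ and $|\vecJ(\vecv_T)|_{\vecH^{-\frac32}_{sc}(\Gamma)}$. Thus everything is reduced to estimating the two boundary quantities $\vecv_N$ in $\vecH^{-\frac12}_{sc}(\Gamma)$ and $\vecJ(\vecv_T)$ in $\vecH^{-\frac32}_{sc}(\Gamma)$.

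The second, and main, extra ingredient is an equation for $\vecJ(\vecv_T)$ on $\Gamma$. Using Assumption \ref{Assumptionmconstant} — so that in the tubular neighborhood $N$ the field $\vecv$ satisfies a constant-coefficient vectorial Helmholtz equation — I would apply to (\ref{Section2SubStep1Eqnv}) the differential operators that reconstitute the combination defining $\vecJ(\vecv_T)$ (the normal derivative $\partial_h/\partial\nu$, the mean-curvature multiplier, and $\mdiv^h_\Gamma$ acting on the tangential trace), producing on the left-hand side $\mop(p)\,\vecJ(\vecv_T)$ for a boundary symbol $p$ which is elliptic by virtue of the three standing assumptions $|\xi|^2-\mu\neq0$, $|\xi|^2-(1+m)\mu\neq0$ and $R_0(x,\xi')-\frac{1+m}{2+m}\mu\neq0$, and hence admits a parametrix. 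Inverting $\mop(p)$ yields the equation (\ref{SectionRegularity1Step4VJ}), whose right-hand side $\vecg_5$ collects the volume data, controlled multiples of $\|\vecv\|_{\vecL^2(D)}$, $|\vecv_N|_{\vecH^{-\frac12}_{sc}(\Gamma)}$ and $|\gamma\vecv|_{\vecH^{-\frac12}_{sc}(\Gamma)}$, and — the term with no scalar counterpart — the trace $\vecu_N$ measured in $\vecH^{\frac32}_{sc}(\Gamma)$; bounding $|\vecg_5|_{\vecH^{\frac32}_{sc}(\Gamma)}$, cf.\ (\ref{sec2Regularity1Estimatevbyg5}), is precisely where the $\vecH^{\frac32}_{sc}(\Gamma)$-estimate on $\vecu_N$ of Lemma \ref{1stRegularityuNLemma} is used. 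This is the first genuinely new difficulty compared with the scalar problem of \cite{R1}, where the Cauchy data of the solution vanish and this term is absent.

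It then remains to close the loop. Into (\ref{SectionRegularity1Step4VJ}) I substitute the a priori estimate (\ref{Regularity1Step3vN}) for $\vecv_N$ and the estimate (\ref{Section2Step4Eqnu}) for $\vecu_N$ furnished by Lemmas \ref{1stRegularityvNuLemma} and \ref{1stRegularityuNLemma}; every term on the right is then a volume-data norm, or $\|\vecv\|_{\vecL^2(D)}$, or one of $|\vecJ(\vecv_T)|_{\vecH^{-\frac32}_{sc}(\Gamma)}$ and $|\gamma\vecv|_{\vecH^{-\frac12}_{sc}(\Gamma)}$ carrying a strictly positive power of $h$ — the $h^{-2}$-type losses coming from inverting $\mop(p)$ being compensated by the $h$-gains already present in those lemma estimates. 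Together with the estimate (\ref{Section2Step4Estimatev}) for $\|\vecv\|_{\vecL^2(D)}$ and the tangential-trace estimate for $|\gamma\vecv|_{\vecH^{-\frac12}_{sc}(\Gamma)}$, this is a linear system in $\|\vecv\|_{\vecL^2(D)}$, $|\vecJ(\vecv_T)|_{\vecH^{-\frac32}_{sc}(\Gamma)}$ and $|\gamma\vecv|_{\vecH^{-\frac12}_{sc}(\Gamma)}$ with $O(h)$ off-diagonal couplings; for $h$ small enough the self-referential contributions are absorbed and one gets the claimed bound on $\|\vecv\|_{\vecL^2(D)}$. The $\scH{1}(D)$-norms of $\mdiv\vecg$ and $\mdiv((1+m)\vecf)$ in the statement are exactly those already required by the $\vecu_N$-estimate of Lemma \ref{1stRegularityuNLemma} (through the trace bounds used to control $\vecg_5$).

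Finally, for $\vecu$: feeding the $\vecv$-bound, together with the now-controlled $|\vecJ(\vecv_T)|_{\vecH^{-\frac32}_{sc}(\Gamma)}$ and $|\gamma\vecv|_{\vecH^{-\frac12}_{sc}(\Gamma)}$, into (\ref{Regularity1Step3u}) of Lemma \ref{1stRegularityvNuLemma} gives $\|\vecu\|_{\vecL^2(D)}$ with the stated powers of $h$. To upgrade to the $\scH{2}(D)$-norm I would rewrite (\ref{FormulationEu}) via $\mcurl\!^2=\nabla\mdiv-\Delta$ as $-\Delta\vecu=\lambda(1+m)\vecu+m\vecv+(1+m)\vecf+\nabla\mdiv\vecu$, use that $\gamma_t\vecu=\gamma_t\mcurl\vecu=0$ so that elliptic regularity for the vector Laplacian applies (as in the proof of Lemma \ref{FormulationFunctionalSpace}, via \cite{ABDG}), and bootstrap: the $L^2$-control of $\mdiv\vecu$ obtained from (\ref{divform}) (cf.\ (\ref{RegularityDivu})) together with the bounds on $\vecv$ and $\vecf$ yields first an $\vecH^1$-estimate and then an $\vecH^2$-estimate, tracking the semiclassical weights at each step so as to land on the $\scH{2}(D)$-norm with the exponents in the statement. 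The main obstacle is the second step — deriving equation (\ref{SectionRegularity1Step4VJ}), i.e.\ verifying ellipticity of the boundary symbol $p$ under the three assumptions and showing that the genuinely new term $\vecg_5$ is controlled in $\vecH^{\frac32}_{sc}(\Gamma)$ through $\vecu_N$; the remainder is $h$-power bookkeeping and absorption.
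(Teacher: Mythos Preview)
Your overall strategy --- assemble an $\vecL^2(D)$-bound for $\vecv$ from (\ref{Section2SubStep1Eqnv}), control the two boundary quantities $|\gamma\vecv|_{\vecH^{-1/2}_{sc}(\Gamma)}$ and $|\vecJ(\vecv_T)|_{\vecH^{-3/2}_{sc}(\Gamma)}$, feed in Lemmas \ref{1stRegularityvNuLemma} and \ref{1stRegularityuNLemma}, and absorb the $O(h)$ self-couplings --- is exactly the paper's. You have also correctly identified the genuinely new feature compared with \cite{R1}: the appearance of $\vecu_N$ in $\vecH^{3/2}_{sc}(\Gamma)$, which is what Lemma \ref{1stRegularityuNLemma} is designed to supply.

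There is, however, a real gap in your ``second extra ingredient''. You propose to obtain the missing boundary equation by applying to the $\vecv$-parametrix representation (\ref{Section2SubStep1Eqnv}) the differential operators that rebuild $\vecJ(\vecv_T)$, and then to invert an elliptic symbol $p$. This cannot work as stated: the trace of (\ref{Section2SubStep1Eqnv}) together with any further trace obtained by differentiating (\ref{Section2SubStep1Eqnv}) in the normal direction are relations coming from the \emph{same} elliptic operator $-h^2\Delta-\mu$; at the principal-symbol level they are the Calder\'on-projector relations, and the resulting $2\times 2$ boundary system is degenerate (it annihilates the Cauchy data of interior solutions). In particular the symbol $p$ you describe would involve only $\rho_1,\rho_2$ and could not be elliptic under the three assumptions --- the factors $\lambda_1,\lambda_2$ and the condition $R_0-\frac{1+m}{2+m}\mu\neq 0$ never enter. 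Moreover, differentiating only (\ref{Section2SubStep1Eqnv}) cannot produce the $\vecu_N$-term you (correctly) anticipate in $\vecg_5$.

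What the paper actually does is take the second boundary relation from the $\vecu$-parametrix: one restricts the full trace equation (\ref{Section2Substep2Eqnu1}) to $\Gamma$, uses $\vecu|_\Gamma=\vecu_N$, and substitutes $\vecJ(\vecv_T)=-\mop(\rho_2)\vecv+\vecg_4$ from (\ref{Section2Step4Eqnv}). This yields a boundary equation for $\gamma\vecv$ (not for $\vecJ(\vecv_T)$) whose principal symbol mixes $\rho_j$ and $\lambda_j$ and is the \emph{same} nonvanishing symbol as in \cite{R1}; its right-hand side is $\vecg_5$, and it is here --- because (\ref{Section2Substep2Eqnu1}) carries the terms $\mop(\frac{1}{\lambda_1-\lambda_2})\nabla^h_\Gamma(\vecu_N\cdot\nu)$ and $\mop(\frac{\lambda_1}{\lambda_1-\lambda_2})\vecu_N$ --- that $|\vecu_N|_{\vecH^{3/2}_{sc}(\Gamma)}$ enters. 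The reference (\ref{sec2Regularity1Estimatevbyg5}) is then $|\gamma\vecv|_{\vecH^{-1/2}_{sc}(\Gamma)}\lesssim |\vecg_5|_{\vecH^{3/2}_{sc}(\Gamma)}$, and (\ref{SectionRegularity1Step4VJ}) is the resulting closed \emph{estimate} on $|\gamma\vecv|_{\vecH^{-1/2}_{sc}}+|\vecJ(\vecv_T)|_{\vecH^{-3/2}_{sc}}$, not an equation. Once (\ref{SectionRegularity1Step4VJ}) is in hand, the $\vecL^2(D)$-estimate (\ref{Section2Step4Estimatev}) for $\vecv$ follows from (\ref{Section2SubStep1Eqnv}) and (\ref{PreliminaryResultsBounaryToDomain}).

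A smaller point: for the $\scH{2}(D)$-estimate on $\vecu$ the paper does not bootstrap classical elliptic regularity but reads it off directly from the semiclassical parametrix representation (\ref{Section2Step1Eqnu}),
\[
\|\vecu\|_{\scH{2}(D)}\ \lesssim\ h^2\|\vecv\|_{\vecL^2(D)}+h^2\|\vecf\|_{\vecL^2(D)}+h\|\mdiv\vecu\|_{\scH{1}(D)}+h^{1/2}|\vecu_N|_{\vecH^{3/2}_{sc}(\Gamma)},
\]
and then invokes (\ref{RegularityDivuH1}), (\ref{Section2Step4Eqnu}), (\ref{SectionRegularity1Step4VJ}) and (\ref{Section2Step4Estimatev}). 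Your bootstrap via $-\Delta\vecu=\ldots$ would have to be rewritten semiclassically (as $-h^2\Delta-\mu(1+m)$) to recover the correct powers of $h$; at that point it is the same computation.
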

\begin{proof}
From (\ref{Section2Substep1Eqnv1}) we have that
\begin{eqnarray*}
\vecv &=& h \gamma K_{-M} \uvecv + h^2 \gamma \tilde{Q} \underline{\vecg} - h^3\tilde{Q} (\frac{1}{i \mu} \nabla_h \underline{\mdiv \vecg})   \nonumber \\
&+& \mop(\frac{1}{\rho_1-\rho_2}) \vecJ(\vecv_T)+ \mop(\frac{\rho_1}{\rho_1-\rho_2}) \vecv + h\mop(r_{-2}) \vecJ(\vecv_T) + h\mop(r_{-1}) \vecv .
\end{eqnarray*}
Then 
\begin{eqnarray}
\vecJ(\vecv_T) + \mop(\rho_2)\vecv &=& \mop(r_{1})( h \gamma_N K_{-M} \uvecv + h^2 \gamma_N \tilde{Q} \underline{\vecg} -h^3\gamma_N \tilde{Q} (\frac{1}{i \mu} \nabla_h \underline{\mdiv \vecg})   ) \nonumber \\
&+& h\mop(r_{-1}) \vecJ(\vecv_T) + h\mop(r_{0})\vecv:= \vecg_4 .  \label{Section2Step4Eqnv}
\end{eqnarray}
From (\ref{Section2Substep2Eqnu1}) we have that
\begin{eqnarray*}
\vecu_N&=& h \gamma K_{-M} \uvecu + h^3 \gamma Q(m K_{-M}\uvecv) + h^4 \gamma Qm\tilde{Q} \underline{\vecg} - h^5 \gamma Qm\tilde{Q} (\frac{1}{i \mu} \nabla_h \underline{\mdiv \vecg})  \\
&+& h^2 \gamma Q((1+m)\underline{\vecf}) + \frac{h}{i}\gamma Q( \nabla_h \underline{\mdiv \vecu}) \nonumber \\
&+& h^2 \mop(\frac{m(\rho_2-\rho_1+\lambda_2-\lambda_1)}{(\lambda_1-\lambda_2)(\lambda_1-\rho_2)(\rho_1-\lambda_2)(\rho_1-\rho_2)}) \vecJ(\vecv_T) \nonumber \\
 &+&  h^2 \mop(\frac{m(\rho_2\lambda_2-\rho_1\lambda_1)}{(\lambda_1-\lambda_2)(\lambda_1-\rho_2)(\rho_1-\lambda_2)(\rho_1-\rho_2)}) \vecv \nonumber \\
&+&  \mop(\frac{1}{\lambda_1-\lambda_2}) \nabla_\Gamma^h(\vecu_N \cdot \nu) +  \mop(\frac{\lambda_1}{\lambda_1-\lambda_2})\vecu_N \\
&+& h^3\mop(r_{-4})\vecJ(\vecv_T) + h^3\mop(r_{-3}) \vecv+h \mop(r_{-2})\nabla_\Gamma^h(\vecu_N \cdot \nu)  +  h\mop(r_{-1})\vecu_N .
\end{eqnarray*}
Combining the above with equation (\ref{Section2Step4Eqnv}) yields
\begin{eqnarray*}
&&\mop(-\frac{m\rho_2(\rho_2-\rho_1+\lambda_2-\lambda_1)}{(\lambda_1-\lambda_2)(\lambda_1-\rho_2)(\rho_1-\lambda_2)(\rho_1-\rho_2)}+ \frac{m(\rho_2\lambda_2-\rho_1\lambda_1)}{(\lambda_1-\lambda_2)(\lambda_1-\rho_2)(\rho_1-\lambda_2)(\rho_1-\rho_2)}) \vecv \\
=&& -h^{-2} \left( h \gamma K_{-M} \uvecu + h^3 \gamma Q(m K_{-M}\uvecv) + h^4 \gamma Qm\tilde{Q} \underline{\vecg} - h^5 \gamma Qm\tilde{Q} (\frac{1}{i \mu} \nabla_h \underline{\mdiv \vecg}) \right)\\
&-& h^{-2}\left( h^2 \gamma Q((1+m)\underline{\vecf}) + \frac{h}{i}\gamma Q( \nabla_h \underline{\mdiv \vecu}) \right) \\
+&& h^{-2}\mop(r_{-1}) \nabla_\Gamma^h(\vecu_N \cdot \nu)  +  h^{-2}\mop(r_{0})\vecu_N + \mop(r_{-3})\vecg_4 + h\mop(r_{-4}) \vecJ(\vecv_T) + h\mop(r_{-3}) \vecv \\
:=&& \vecg_5.
\end{eqnarray*}
As in \cite{R1}, the symbol 
$$
-\frac{m\rho_2(\rho_2-\rho_1+\lambda_2-\lambda_1)}{(\lambda_1-\lambda_2)(\lambda_1-\rho_2)(\rho_1-\lambda_2)(\rho_1-\rho_2)}+ \frac{m(\rho_2\lambda_2-\rho_1\lambda_1)}{(\lambda_1-\lambda_2)(\lambda_1-\rho_2)(\rho_1-\lambda_2)(\rho_1-\rho_2)}
$$
is not zero and we can apply its parametrix to the above equation. Then
\begin{eqnarray} \label{sec2Regularity1Estimatevbyg5}
|\vecv|_{\vecH^{-\frac{1}{2}}_{sc}(\Gamma)} \lesssim  |\vecg_5|_{\vecH^{\frac{3}{2}}_{sc}(\Gamma)} .
\end{eqnarray}
Estimates (\ref{Section2Step4Eqnv}) and (\ref{Regularity1Step5Estimateg4}) yields
\begin{eqnarray} \label{Regularity1Step5EstimateVecJ}
|\vecJ(\vecv_T)|_{\vecH^{-\frac{3}{2}}_{sc}(\Gamma)} &\lesssim& |\vecv|_{\vecH^{-\frac{1}{2}}_{sc}(\Gamma)}  +|\vecg_4|_{\vecH^{-\frac{3}{2}}_{sc}(\Gamma)} \nonumber \\
&\lesssim& h^{\frac{1}{2}}\|\vecv\|_{\vecL^2(D)} + h^{\frac{3}{2}}\|\vecg\|_{\vecL^2(D)} +h^{\frac{5}{2}}\|\mdiv \vecg\|_{\scH{1}(D)} \nonumber \\
&+& h | \vecJ(\vecv_T) |_{\vecH^{-\frac{3}{2}}_{sc}(\Gamma)} + |\vecv |_{\vecH^{-\frac{1}{2}}_{sc}(\Gamma)}. 
\end{eqnarray}
A direct calculation (see the Calculation subsection \ref{RegularityCalculation}) yields for small enough $h$
\begin{eqnarray}\label{SectionRegularity1Step4VJ}
&& |\vecv|_{\vecH^{-\frac{1}{2}}_{sc}(\Gamma)} + |\vecJ(\vecv_T)|_{\vecH^{-\frac{3}{2}}_{sc}(\Gamma)} \nonumber \\
&\lesssim& h^{\frac{1}{2}}\|\vecv\|_{\vecL^2(D)} + h^{\frac{3}{2}}\|\vecg\|_{\vecL^2(D)} +h^{\frac{5}{2}}\|\mdiv \vecg\|_{\scH{1}(D)} \nonumber  \\
&+& h^{-\frac{1}{2}}\|\vecf\|_{\vecL^2(D)} + h^{-\frac{1}{2}}\|\mdiv \left( (1+m)\vecf \right)\|_{\scH{1}(D)}. 
\end{eqnarray}
Notice that $\vecv$ satisfies equation (\ref{Section2SubStep1Eqnv}). Then
applying estimates (\ref{PreliminaryResultsBounaryToDomain}) and
(\ref{SectionRegularity1Step4VJ}) gives
\begin{eqnarray} \label{Section2Step4Estimatev}
\|\vecv\|_{\vecL^2(D)} \lesssim h^2 \|\vecg\|_{\vecL^2(D)} + h^3 \|\mdiv \vecg\|_{\scH{1}(D)} +  \|\vecf\|_{\vecL^2(D)} +  \|\mdiv \left( (1+m)\vecf \right) \|_{H^1(D)}.
\end{eqnarray}
From equation (\ref{Section2Step1Eqnu}) we have that
\begin{eqnarray*}
\|\vecu\|_{\scH{2}(D)} \lesssim h^2 \|\vecv\|_{\vecL^2(D)} +  h^2\|\vecf\|_{\vecL^2(D)} + h\|\mdiv \vecu\|_{\scH{1}(D)}+ h^{\frac{1}{2}} |\vecu_N|_{\vecH^{\frac{3}{2}}_{sc}(\Gamma)} .
\end{eqnarray*}
From estimates (\ref{RegularityDivuH1}) (\ref{Section2Step4Eqnu}) (\ref{SectionRegularity1Step4VJ}) and (\ref{Section2Step4Estimatev}) we have 
\begin{eqnarray*}
\|\vecu\|_{\scH{2}(D)} \lesssim h^2 \|\vecf\|_{\vecL^2(D)} +  h^4\|\vecg\|_{\vecL^2(D)} + h^5 \|\mdiv \vecg\|_{\scH{1}(D)} + h^2\|\mdiv \left( (1+m)\vecf \right) \|_{\scH{1}(D)}.
\end{eqnarray*}
This completes the proof.
\end{proof} \proofend
%%%%%%%%%%%%%
\subsection{Calculation} \label{RegularityCalculation}
In this subsection, we will show the necessary calculations for subsection \ref{SectionRegularity1}.

1. {Calculation for Lemma \ref{1stRegularityuLemma}} 

\medskip

Taking the divergence of equation (\ref{FormulationEu}) and noticing that $\lambda=\mu h^{-2}$ yields
\begin{eqnarray} \label{Section2Step2Divu}
-\mu((1+m)\mdiv \vecu+\nabla m \cdot \vecu)-h^2 (\nabla m \cdot \vecv+m\mdiv \vecv)= h^2\mdiv\left( \left(1+m\right) \vecf\right).
\end{eqnarray}
Since $\nabla m$ has compact support in $D$ and $|\xi|^2-\mu \not = 0$, estimate (\ref{PreliminaryResultsvBoundary}) yields
\begin{eqnarray*}
\|\nabla m \cdot \vecv\|_{L^2(D)} \lesssim h \|\vecv\|_{\vecL^2(D)}+ h^2 \|\vecg\|_{\vecL^2(D)} + h^3 \|\mdiv \vecg\|_{L^2(D)}.
\end{eqnarray*}
Therefore
\begin{eqnarray} \label{RegularityDivuPriori}
\|\mdiv \vecu\|_{L^2(D)} &\lesssim& \|\vecu\|_{\vecL^2(D)}+ h^3 \|\vecv\|_{\vecL^2(D)} +h^2 \|\mdiv \vecv\|_{\vecL^2(D)}+ h^4 \|\vecg\|_{\vecL^2(D)}+h^5 \|\mdiv \vecg\|_{L^2(D)}  \nonumber\\
&+& h^2 \|\mdiv \left((1+m)\vecf \right)\|_{L^2(D)} .
\end{eqnarray}
Since $-\lambda \mdiv \vecv=\mdiv \vecg$, we have that
\begin{eqnarray*} 
\mu \mdiv \vecv = -h^2 \mdiv \vecg
\end{eqnarray*}
and therefore 
\begin{eqnarray}  \label{RegularityDivvDivg}
\|\mdiv \vecv\|_{\scH{s}(D)} \lesssim h^2 \| \mdiv \vecg\|_{\scH{s}(D)} .
\end{eqnarray} 
Substituting (\ref{RegularityDivvDivg}) (with s=0) into (\ref{RegularityDivuPriori}) yields
\begin{eqnarray} \label{RegularityDivu}
\|\mdiv \vecu\|_{L^2(D)} &\lesssim& \|\vecu\|_{\vecL^2(D)}+ h^3 \|\vecv\|_{\vecL^2(D)} + h^4 \|\vecg\|_{\vecL^2(D)}+h^4 \|\mdiv \vecg\|_{L^2(D)}  \nonumber\\
&+& h^2 \|\mdiv \left((1+m)\vecf \right)\|_{L^2(D)} .
\end{eqnarray}
Notice that since $|\vecf_N|_{\vecH^{-\frac{1}{2}}(\Gamma)} \lesssim \|\vecf\|_{\vecL^2(D)} +\|\mdiv \vecf\|_{L^2(D)}$, then
\begin{eqnarray} \label{Regularity1EstimatefN}
|\vecf_N|_{\vecH^{-\frac{1}{2}}_{sc}(\Gamma)} \lesssim h^{-\frac{1}{2}} \left( \|\vecf\|_{\vecL^2(D)} + \|\mdiv \vecf\|_{L^2(D)} \right).
\end{eqnarray}
From equation (\ref{VectorialPotentialsutov}) and estimate (\ref{Regularity1EstimatefN}) we have that
\begin{eqnarray} \label{Section2Step2EstimateuNbyVNfN}
| \vecu_N|_{\vecH^{-\frac{1}{2}}_{sc}(\Gamma)} 
&\lesssim& h^2| \vecv_N|_{\vecH^{-\frac{1}{2}}_{sc}(\Gamma)} + h^2 |\vecf_N|_{\vecH^{-\frac{1}{2}}_{sc}(\Gamma)} \nonumber \\
&\lesssim& h^2| \vecv_N|_{\vecH^{-\frac{1}{2}}_{sc}(\Gamma)} + h^{\frac{3}{2}} \left( \|\vecf\|_{\vecL^2(D)} + \|\mdiv \vecf\|_{L^2(D)} \right) .
\end{eqnarray}
Plugging estimates (\ref{RegularityDivu}) and (\ref{Section2Step2EstimateuNbyVNfN})  into (\ref{Section2Step2Estimateu}) yields for $h$ small enough
\begin{eqnarray*} 
\|\vecu\|_{\vecL^2(D)} &\lesssim& h^2 \|\vecv\|_{\vecL^2(D)}+h^2 \|\vecf\|_{\vecL^2(D)}+h^5 \|\vecg\|_{\vecL^2(D)}+ h^5 \|\mdiv \vecg\|_{L^2(D)}  \nonumber \\
&+&h^2 \|\mdiv \vecf\|_{L^2(D)} +  h^{\frac{5}{2}}|\vecv_N|_{\vecH^{-\frac{1}{2}}_{sc}(\Gamma)} .
\end{eqnarray*}

2. Calculation for Lemma \ref{1stRegularityvNLemma} 

\medskip

\textit{Calculation for Step 1}

\medskip

Taking the traces on the boundary $\Gamma$ and using equations (\ref{PreliminaryResultsTildeQBoundary1})-(\ref{PreliminaryResultsTildeQBoundary2}) we have
\begin{eqnarray}
\gamma \vecv &=& h \gamma K_{-M} \uvecv + h^2 \gamma \tilde{Q} \underline{\vecg} - h^3 \gamma \tilde{Q} (\frac{1}{i \mu} \nabla_h \underline{\mdiv \vecg})  \nonumber \\
&+& \mbox{op}(\frac{1}{\rho_1-\rho_2}) \vecJ(\vecv_T) + \mbox{op}(\frac{\rho_1}{\rho_1-\rho_2}) \vecv  +h \mbox{op}(r_{-2}) \vecJ(\vecv_T) + h\mop(r_{-1}) \vecv   \label{Section2Substep1Eqnv1}
\end{eqnarray}
where $\gamma$ is the trace operator on $\Gamma$. Furthermore taking the normal component yields
\begin{eqnarray*}
\vecv_N &=& h \gamma_N K_{-M} \uvecv + h^2 \gamma_N \tilde{Q} \underline{\vecg} - h^3\gamma_N \tilde{Q} (\frac{1}{i \mu} \nabla_h \underline{\mdiv \vecg})  \\
&+& \mop(\frac{1}{\rho_1-\rho_2})(- \nu \mdiv\!^h_\Gamma \vecv_T)+ \mop(\frac{\rho_1}{\rho_1-\rho_2}) \vecv_N + h\mop(r_{-2}) \vecJ(\vecv_T) + h\mop(r_{-1}) \vecv .
\end{eqnarray*}
Applying $\mop(\rho_2-\rho_1)$ to both sides yields equation (\ref{Section2Substep1Eqnv}).

\medskip

\textit{Calculation for Step 2}

\medskip

Substituting equation (\ref{Section2SubStep1Eqnv}) into equation (\ref{Section2Step1Eqnu}) yields
\begin{eqnarray*}
\vecu &=&  h K_{-M} \uvecu + h^3 Q(m K_{-M}\uvecv) + h^4Qm\tilde{Q} \underline{\vecg} - h^5Qm\tilde{Q} (\frac{1}{i \mu} \nabla_h \underline{\mdiv \vecg})   \\
&+& h^2Q((1+m)\underline{\vecf}) + \frac{h}{i}Q( \nabla_h \underline{\mdiv \vecu}) \nonumber \\
&+& h^2 Qm\tilde{Q}[\frac{h}{i} \vecJ(\vecv_T) \otimes \delta_{s=0} + \frac{h}{i}\vecv \otimes D^h_s \delta_{s=0}] \nonumber \\
&+&  Q(\frac{h}{i}\nabla_\Gamma^h(\vecu_N \cdot \nu) \otimes \delta_{s=0}) +  Q(\frac{h}{i}\vecu_N \otimes D^h_s \delta_{s=0}) . \nonumber
\end{eqnarray*}
Taking the traces on $\Gamma$ and using equations (\ref{PreliminaryResultsQmTildeQBoundary1}) (\ref{PreliminaryResultsQmTildeQBoundary2}) yields
\begin{eqnarray}
\vecu|_\Gamma&=& h \gamma K_{-M} \uvecu + h^3 \gamma Q(m K_{-M}\uvecv) + h^4 \gamma Qm\tilde{Q} \underline{\vecg} -  h^5\gamma Qm\tilde{Q} (\frac{1}{i \mu} \nabla_h \underline{\mdiv \vecg})   \nonumber \\
&+& h^2 \gamma Q((1+m)\underline{\vecf}) + \frac{h}{i}\gamma Q( \nabla_h \underline{\mdiv \vecu}) \nonumber \\
&+& h^2 \mop \left( \frac{m(\rho_2-\rho_1+\lambda_2-\lambda_1)}{(\lambda_1-\lambda_2)(\lambda_1-\rho_2)(\rho_1-\lambda_2)(\rho_1-\rho_2)} \right) \vecJ(\vecv_T) \nonumber \\
 &+&  h^2 \mop \left( \frac{m(\rho_2\lambda_2-\rho_1\lambda_1)}{(\lambda_1-\lambda_2)(\lambda_1-\rho_2)(\rho_1-\lambda_2)(\rho_1-\rho_2)} \right) \vecv \nonumber \\
&+&  \mop(\frac{1}{\lambda_1-\lambda_2})\nabla_\Gamma^h(\vecu_N \cdot \nu) +  \mop(\frac{\lambda_1}{\lambda_1-\lambda_2})\vecu_N \nonumber \\
&+& h^3\mop(r_{-4}) \vecJ(\vecv_T) + h^3\mop(r_{-3})\vecv+ h\mop(r_{-2})\nabla_\Gamma^h(\vecu_N \cdot \nu) +  h\mop(r_{-1})\vecu_N .
\label{Section2Substep2Eqnu1}
\end{eqnarray}
Taking the normal component and noticing that $\nu \cdot \nabla_\Gamma^h(\vecu_N \cdot \nu)=0$ yields equation (\ref{Section2Substep2Eqnu}).

\medskip

\textit{Calculation for Step 4}

\medskip

Applying estimates (\ref{PreliminaryResultsEstimateQ}) and (\ref{PrelinimaryResultsTrace}) gives
\begin{eqnarray*}
|\vecg_2|_{\vecH^{-\frac{1}{2}}_{sc}(\Gamma)} 
&\lesssim& h^{\frac{1}{2}}\|\vecu\|_{\vecL^2(D)}+h^{\frac{5}{2}}\|\vecv\|_{\vecL^2(D)} + h^{\frac{7}{2}}\|\vecg\|_{\vecL^2(D)} +h^{\frac{9}{2}}\|\mdiv \vecg\|_{\vecL^2(D)} + h^{\frac{3}{2}}\|\vecf\|_{\vecL^2(D)} \\
&+& h^{\frac{1}{2}}\|\mdiv \vecu\|_{\vecL^2(D)} +  h^3 | \vecJ(\vecv_T) |_{\vecH^{-\frac{3}{2}}_{sc}(\Gamma)} + h^3 | \vecv |_{\vecH^{-\frac{1}{2}}_{sc}(\Gamma)} + h | \vecu_N |_{\vecH^{-\frac{1}{2}}_{sc}(\Gamma)}.
\end{eqnarray*}
From equation (\ref{VectorialPotentialsutov}) $\vecu_N=-h^2
\frac{m}{\mu(1+m)}\vecv_N - h^2 \frac{1}{\mu} \vecf_N$, and therefore
\begin{eqnarray}
|\vecg_2|_{\vecH^{-\frac{1}{2}}_{sc}(\Gamma)} 
&\lesssim& h^{\frac{1}{2}}\|\vecu\|_{\vecL^2(D)}+h^{\frac{5}{2}}\|\vecv\|_{\vecL^2(D)} + h^{\frac{7}{2}}\|\vecg\|_{\vecL^2(D)} +h^{\frac{9}{2}}\|\mdiv \vecg\|_{\vecL^2(D)} + h^{\frac{3}{2}}\|\vecf\|_{\vecL^2(D)} \nonumber \\
&+& h^{\frac{1}{2}}\|\mdiv \vecu\|_{\vecL^2(D)} +  h^3 | \vecJ(\vecv_T) |_{\vecH^{-\frac{3}{2}}_{sc}(\Gamma)} + h^3 | \vecv |_{\vecH^{-\frac{1}{2}}_{sc}(\Gamma)} \nonumber \\
&+& h^3 | \vecv_N|_{\vecH^{-\frac{1}{2}}_{sc}(\Gamma)} + h^3 |\vecf_N|_{\vecH^{-\frac{1}{2}}_{sc}(\Gamma)}. \label{Regularity1Estimateg2}
\end{eqnarray}
Applying estimates (\ref{PreliminaryResultsEstimateQ}) and
(\ref{PrelinimaryResultsTrace}) yield
\begin{eqnarray*}
|\vecg_1|_{\vecH^{-\frac{1}{2}}_{sc}(\Gamma)}
&\lesssim& h^{\frac{1}{2}}\|\vecv\|_{\vecL^2(D)}+ h^{\frac{3}{2}}\|\vecg\|_{\vecL^2(D)} +h^{\frac{5}{2}}\|\mdiv \vecg\|_{\vecL^2(D)} \\
&+&  h | \vecJ(\vecv_T) |_{\vecH^{-\frac{3}{2}}_{sc}(\Gamma)} + h |\vecv|_{\vecH^{-\frac{1}{2}}_{sc}(\Gamma)} + h | \vecv_N|_{\vecH^{-\frac{1}{2}}_{sc}(\Gamma)} + h |\nu \mdiv\!^h_\Gamma \vecv_T|_{\vecH^{-\frac{3}{2}}_{sc}(\Gamma)} .
\end{eqnarray*}
Since $\vecv_N$ and $-\nu \mdiv^h_\Gamma \vecv_T$ are the normal components of $\vecv$ and $\vecJ(\vecv_T)$ respectively, then
\begin{eqnarray}
|\vecg_1|_{\vecH^{-\frac{1}{2}}_{sc}(\Gamma)} \nonumber 
&\lesssim& h^{\frac{1}{2}}\|\vecv\|_{\vecL^2(D)}+ h^{\frac{3}{2}}\|\vecg\|_{\vecL^2(D)} +h^{\frac{5}{2}}\|\mdiv \vecg\|_{\vecL^2(D)} \nonumber \\
&+&  h | \vecJ(\vecv_T) |_{\vecH^{-\frac{3}{2}}_{sc}(\Gamma)} + h |\vecv|_{\vecH^{-\frac{1}{2}}_{sc}(\Gamma)} . \label{Regularity1Estimateg1}
\end{eqnarray}
Then estimates (\ref{Regularity1EstimatefN}) (\ref{Regularity1Estimateg2}) and (\ref{Regularity1Estimateg1}) yield for small enough $h$ that
\begin{eqnarray*} 
|\vecv_N|_{\vecH^{-\frac{1}{2}}_{sc}(\Gamma)} &\lesssim& h^{\frac{3}{2}} \|\vecg\|_{\vecL^2(D)} + h^{\frac{5}{2}} \|\mdiv \vecg\|_{\vecL^2(D)} + h^{-\frac{1}{2}} \|\vecf\|_{\vecL^2(D)} + h^{-\frac{1}{2}} \|\mdiv \vecf\|_{L^2(D)} \nonumber \\
&+& h^{\frac{1}{2}}\|\vecv\|_{\vecL^2(D)} + h^{-\frac{3}{2}}\|\vecu\|_{\vecL^2(D)} + h^{-\frac{3}{2}}\|\mdiv \vecu\|_{L^2(D)} \nonumber \\
&+& h |\vecJ(\vecv_T)|_{\vecH^{-\frac{3}{2}}_{sc}(\Gamma)} + h |\gamma \vecv|_{\vecH^{-\frac{1}{2}}_{sc}(\Gamma)} .
\end{eqnarray*}

\medskip

3. Calculation for Lemma \ref{1stRegularityuNLemma} 

\medskip

From inequalities (\ref{PreliminaryResultsEstimateQ}) and
(\ref{PrelinimaryResultsTrace}) one get
\begin{eqnarray} \label{Regularity1Step3estimateg2}
|\vecg_2|_{\vecH^{\frac{3}{2}}_{sc}(\Gamma)} 
&\lesssim& h^{\frac{1}{2}}\|\vecu\|_{\vecL^2(D)}+h^{\frac{5}{2}}\|\vecv\|_{\vecL^2(D)} + h^{\frac{7}{2}}\|\vecg\|_{\vecL^2(D)} +h^{\frac{9}{2}}\|\mdiv \vecg\|_{\vecL^2(D)} + h^{\frac{3}{2}}\|\vecf\|_{\vecL^2(D)} \nonumber \\
&+& h|\gamma_N Q \nabla_h \underline{\mdiv \vecu}|_{\vecH^{\frac{3}{2}}_{sc}(\Gamma)}  +  h^3 | \vecJ(\vecv_T) |_{\vecH^{-\frac{3}{2}}_{sc}(\Gamma)} + h^3 | \vecv |_{\vecH^{-\frac{1}{2}}_{sc}(\Gamma)} + h | \vecu_N |_{\vecH^{\frac{1}{2}}_{sc}(\Gamma)}.
\end{eqnarray}
This motivates us to derive an estimate for $|\gamma_N Q \nabla_h \underline{\mdiv \vecu}|_{\vecH^{\frac{3}{2}}_{sc}(\Gamma)}$. Since $\nabla m$ has compact support in $D$, then estimate (\ref{PreliminaryResultsvBoundary}) yields
\begin{eqnarray*}
\|\nabla m \cdot \vecv\|_{\scH{1}(D)} \lesssim h \|\vecv\|_{\vecL^2(D)}+ h^2 \|\vecg\|_{\vecL^2(D)} +h^3 \|\mdiv \vecg\|_{L^2(D)}
\end{eqnarray*}
and 
\begin{eqnarray*}
\|\nabla m \cdot \vecu\|_{\scH{1}(D)} \lesssim  h \|\vecu\|_{\vecL^2(D)}+h^2 \|\vecv\|_{\vecL^2(D)}+ h^2 \|\vecf\|_{\vecL^2(D)} +h\|\mdiv \vecu\|_{L^2(D)} .
\end{eqnarray*}
From equation (\ref{Section2Step2Divu}) and estimate (\ref{RegularityDivvDivg}) (with s=1) we have that for small $h$
\begin{eqnarray} \label{RegularityDivuH1}
\|\mdiv \vecu\|_{\scH{1}(D)} &\lesssim& \|\nabla m \cdot \vecu\|_{\scH{1}(D)} + h^2\|\nabla m \cdot \vecv\|_{\scH{1}(D)}  + h^2\|\mdiv\vecv\|_{\scH{1}(D)} \nonumber \\
&+& h^2 \|\mdiv \left( (1+m)\vecf \right)\|_{\scH{1}(D)} \nonumber \\
&\lesssim& h^2 \|\vecv\|_{\vecL^2(D)}+ h^4 \|\vecg\|_{\vecL^2(D)} + h^4 \|\mdiv \vecg\|_{\scH{1}(D)} +h \|\vecu\|_{\vecL^2(D)} \nonumber \\
&+&  h^2 \|\mdiv \left( (1+m)\vecf \right)\|_{\scH{1}(D)}+h^2 \|\vecf\|_{\vecL^2(D)} .
\end{eqnarray}
From Lemma \ref{PreliminaryResultsQNablau} and estimate (\ref{PrelinimaryResultsTrace}) we have
\begin{eqnarray*} 
|\gamma_N Q \nabla_h \underline{\mdiv \vecu}|_{\vecH^{\frac{3}{2}}_{sc}(\Gamma)} &\lesssim& h^{-\frac{1}{2}}\|Q \nabla_h \underline{\mdiv \vecu}\|_{\scH{2}(D)} \nonumber \\
&\lesssim& h^{-\frac{1}{2}}\|\mdiv \vecu\|_{\scH{1}(D)} .
\end{eqnarray*}
Combined with  (\ref{RegularityDivuH1}), this inequality gives
\begin{eqnarray} 
|\gamma_N Q \nabla_h \underline{\mdiv \vecu}|_{\vecH^{\frac{3}{2}}_{sc}(\Gamma)} 
&\lesssim& h^{\frac{3}{2}} \|\vecv\|_{\vecL^2(D)}+ h^{\frac{7}{2}} \|\vecg\|_{\vecL^2(D)} + h^{\frac{7}{2}} \|\mdiv \vecg\|_{\scH{1}(D)} + h^{\frac{1}{2}} \|\vecu\|_{\vecL^2(D)}\nonumber \\
&+& h^{\frac{3}{2}} \|\mdiv \left( (1+m)\vecf \right)\|_{\scH{1}(D)}+h^{\frac{3}{2}} \|\vecf\|_{\vecL^2(D)} . \label{RegularityStep3EstimatetraceQnabladivu}
\end{eqnarray}
Substituting estimates (\ref{Regularity1Step3u}) and (\ref{RegularityStep3EstimatetraceQnabladivu}) into (\ref{Regularity1Step3estimateg2})  yields
\begin{eqnarray} \label{Regularity1Step3estimateg2more1}
|\vecg_2|_{\vecH^{\frac{3}{2}}_{sc}(\Gamma)} 
&\lesssim& h^{\frac{1}{2}}\|\vecu\|_{\vecL^2(D)}+h^{\frac{5}{2}}\|\vecv\|_{\vecL^2(D)} + h^{\frac{7}{2}}\|\vecg\|_{\vecL^2(D)} +h^{\frac{9}{2}}\|\mdiv \vecg\|_{\scH{1}(D)} + h^{\frac{3}{2}}\|\vecf\|_{\vecL^2(D)} \nonumber \\
&+& h^{\frac{5}{2}} \|\mdiv \left( (1+m)\vecf \right)\|_{\scH{1}(D)} +  h^3 | \vecJ(\vecv_T) |_{\vecH^{-\frac{3}{2}}_{sc}(\Gamma)} + h^3 | \vecv |_{\vecH^{-\frac{1}{2}}_{sc}(\Gamma)} + h | \vecu_N |_{\vecH^{\frac{1}{2}}_{sc}(\Gamma)} \nonumber \\
&\lesssim& h^{\frac{7}{2}}\|\vecg\|_{\vecL^2(D)} +h^{\frac{9}{2}}\|\mdiv \vecg\|_{\scH{1}(D)} + h^{\frac{3}{2}}\|\vecf\|_{\vecL^2(D)} +h^{\frac{5}{2}} \|\mdiv \left( (1+m)\vecf \right)\|_{\scH{1}(D)}  \nonumber \\
&+&  h^{\frac{5}{2}}\|\vecv\|_{\vecL^2(D)} + h^3 | \vecJ(\vecv_T) |_{\vecH^{-\frac{3}{2}}_{sc}(\Gamma)} + h^3 | \vecv |_{\vecH^{-\frac{1}{2}}_{sc}(\Gamma)} + h | \vecu_N |_{\vecH^{\frac{1}{2}}_{sc}(\Gamma)} .
\end{eqnarray}
Combining estimates (\ref{Regularity1Estimateg1}) (\ref{Regularity1Step3vN}) and (\ref{Regularity1Step3estimateg2more1}) implies that
\begin{eqnarray*} 
|\vecu_N|_{\vecH^{\frac{3}{2}}_{sc}(\Gamma)} &\lesssim & h^2 |\vecv_N|_{\vecH^{-\frac{1}{2}}_{sc}(\Gamma)} +  h^2|\vecg_1|_{\vecH^{-\frac{1}{2}}_{sc}(\Gamma)} + |\vecg_2|_{\vecH^{\frac{3}{2}}_{sc}(\Gamma)} \nonumber  \\
&\lesssim & h^{\frac{7}{2}} \|\vecg\|_{\vecL^2(D)} +h^{\frac{9}{2}}  \|\mdiv \vecg\|_{\scH{1}(D)} + h^{\frac{3}{2}} \|\vecf\|_{\vecL^2(D)} + h^{\frac{5}{2}}\|\vecv\|_{\vecL^2(D)} \nonumber \\
&+& h^{\frac{3}{2}}\|\mdiv \left( (1+m)\vecf \right) \|_{\scH{1}(D)} + h^3 |\vecJ(\vecv_T)|_{\vecH^{-\frac{3}{2}}_{sc}(\Gamma)} + h^3 |\gamma \vecv|_{\vecH^{-\frac{1}{2}}_{sc}(\Gamma)} .
\end{eqnarray*}

\medskip

4. Calculation for Theorem \ref{Regularity1}

\medskip

Applying estimates (\ref{PreliminaryResultsEstimateQ}) and (\ref{PrelinimaryResultsTrace}) gives
\begin{eqnarray*}
|\vecg_5|_{\vecH^{\frac{3}{2}}_{sc}(\Gamma)} 
&\lesssim& h^{-\frac{3}{2}}\|\vecu\|_{\vecL^2(D)}+h^{\frac{1}{2}}\|\vecv\|_{\vecL^2(D)} + h^{\frac{3}{2}}\|\vecg\|_{\vecL^2(D)} +h^{\frac{5}{2}}\|\mdiv \vecg\|_{\vecL^2(D)} + h^{-\frac{1}{2}}\|\vecf\|_{\vecL^2(D)} \\
&+& h^{-1}\|\gamma Q \nabla_h \underline{\mdiv \vecu}\|_{\vecH^{\frac{3}{2}}_{sc}(\Gamma)}  + h^{-2} | \vecu_N |_{\vecH^{\frac{3}{2}}_{sc}(\Gamma)} \\
&+&  h | \vecJ(\vecv_T) |_{\vecH^{-\frac{3}{2}}_{sc}(\Gamma)} + h | \vecv |_{\vecH^{-\frac{1}{2}}_{sc}(\Gamma)} + |\vecg_4|_{\vecH^{-\frac{3}{2}}_{sc}(\Gamma)}.
\end{eqnarray*}
Applying estimates (\ref{PreliminaryResultsEstimateQ}) and (\ref{PrelinimaryResultsTrace}) gives
\begin{eqnarray} \label{Regularity1Step5Estimateg4}
|\vecg_4|_{\vecH^{-\frac{3}{2}}_{sc}(\Gamma)} 
&\lesssim& h^{\frac{1}{2}}\|\vecv\|_{\vecL^2(D)} + h^{\frac{3}{2}}\|\vecg\|_{\vecL^2(D)} +h^{\frac{5}{2}}\|\mdiv \vecg\|_{\vecL^2(D)} \nonumber \\
&+& h | \vecJ(\vecv_T) |_{\vecH^{-\frac{3}{2}}_{sc}(\Gamma)} + h | \vecv |_{\vecH^{-\frac{1}{2}}_{sc}(\Gamma)}. 
\end{eqnarray}
Combining estimates (\ref{Regularity1Step3u}) (\ref{Section2Step4Eqnu}) (\ref{sec2Regularity1Estimatevbyg5})  (\ref{RegularityStep3EstimatetraceQnabladivu}) and (\ref{Regularity1Step5Estimateg4})  yields
\begin{eqnarray}
|\vecv|_{\vecH^{-\frac{1}{2}}_{sc}(\Gamma)} 
&\lesssim& |\vecg_5|_{\vecH^{\frac{3}{2}}_{sc}(\Gamma)} \\
&\lesssim& h^{\frac{1}{2}}\|\vecv\|_{\vecL^2(D)} + h^{\frac{3}{2}}\|\vecg\|_{\vecL^2(D)} +h^{\frac{5}{2}}\|\mdiv \vecg\|_{\scH{1}(D)} + h^{-\frac{1}{2}}\|\vecf\|_{\vecL^2(D)} \nonumber  \\
&+& h^{-\frac{1}{2}}\|\mdiv \left( (1+m)\vecf \right)\|_{\scH{1}(D)}+h | \vecJ(\vecv_T) |_{\vecH^{-\frac{3}{2}}_{sc}(\Gamma)} + h | \vecv |_{\vecH^{-\frac{1}{2}}_{sc}(\Gamma)}. \label{Regularity1Step5EstimatevBoundary}
\end{eqnarray}
Combining estimates (\ref{Regularity1Step5EstimateVecJ}) and (\ref{Regularity1Step5EstimatevBoundary}) yields
\begin{eqnarray*}
&& |\vecv|_{\vecH^{-\frac{1}{2}}_{sc}(\Gamma)} + |\vecJ(\vecv_T)|_{\vecH^{-\frac{3}{2}}_{sc}(\Gamma)} \\
&\lesssim& h^{\frac{1}{2}}\|\vecv\|_{\vecL^2(D)} + h^{\frac{3}{2}}\|\vecg\|_{\vecL^2(D)} +h^{\frac{5}{2}}\|\mdiv \vecg\|_{\scH{1}(D)} + h^{-\frac{1}{2}}\|\vecf\|_{\vecL^2(D)} \nonumber  \\
&+& h^{-\frac{1}{2}}\|\mdiv \left( (1+m)\vecf \right)\|_{\scH{1}(D)}+h | \vecJ(\vecv_T) |_{\vecH^{-\frac{3}{2}}_{sc}(\Gamma)} + h | \vecv |_{\vecH^{-\frac{1}{2}}_{sc}(\Gamma)}. 
\end{eqnarray*}
Then for small enough $h$ we have estimate (\ref{SectionRegularity1Step4VJ}).
%%%%%%%%%%%%
\subsection{A Second Regularity Result}
In this section we study the regularity under the restriction that
$\mdiv\left( \left( 1+m \right) \vecf \right)=0$ and $\mdiv \vecg = 0$. The
reason to consider this case is to obtain a regularizing effect of the operator $R_z$. In particular, from equation (\ref{Section2Step2Divu}), we see that
$\mdiv \vecu$ has the same regularity as $\mdiv\!\left( (1+m)\vecf \right)$
(with a similar situation for $\vecv$) and therefore the regularizing effect does not
hold in general. On the other hand, if the right hand side of equation (\ref{Section2Step2Divu}) vanishes, then the regularity of $\mdiv \vecu$ is controlled by $\vecu$ and $\nabla m \cdot \vecv$. This allows us to obtain the desired regularity of $\vecu$.
\begin{theorem} \label{Regularity2} Assume that the hypothesis of Theorem
  \ref{Regularity1} hold. If $\vecf \in \scH{2}(D)$, $\mdiv\left( \left( 1+m \right) \vecf \right)=0$ and $\mdiv \vecg=0$, then for sufficiently small $h:=\frac{1}{|\lambda|^{\frac{1}{2}}}$
\begin{eqnarray*}
&&\|\vecv\|_{\scH{2}(D)} \lesssim h^2 \|\vecg\|_{\vecL^2(D)} +  \|\vecf\|_{\scH{2}(D)}, \\
&&\|\vecu\|_{\scH{4}(D)} \lesssim h^4\|\vecg\|_{\vecL^2(D)} + h^2\|\vecf\|_{\scH{2}(D)} .
\end{eqnarray*}
Moreover if $\vecf \in \scH{4}(D)$ and $\vecg \in \scH{2}(D)$, then for sufficiently small $h:=\frac{1}{|\lambda|^{\frac{1}{2}}}$
\begin{eqnarray*}
&&\|\vecv\|_{\scH{4}(D)} \lesssim h^2 \|\vecg\|_{\scH{2}(D)} +  \|\vecf\|_{\scH{4}(D)} ,\\
&&\|\vecu\|_{\scH{6}(D)} \lesssim h^4\|\vecg\|_{\scH{2}(D)}+ h^2\|\vecf\|_{\scH{4}(D)} .
\end{eqnarray*}
\end{theorem}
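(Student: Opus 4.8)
The plan is to bootstrap the semiclassical analysis of Section~\ref{SectionRegularity1}, using the two extra hypotheses $\mdiv((1+m)\vecf)=0$ and $\mdiv\vecg=0$ to gain two orders of regularity for both $\vecu$ and $\vecv$. First I would record the consequences of these hypotheses. Taking the divergence of (\ref{FormulationEv}) gives $\mdiv\vecv=0$, so $\mcurl\mcurl\vecv=-\Delta\vecv$ and $\vecv$ solves the vectorial Helmholtz equation $-\Delta\vecv-\lambda\vecv=\vecg$ in $D$. Substituting $\mdiv\vecv=0$ and $\mdiv((1+m)\vecf)=0$ into (\ref{Section2Step2Divu}) gives the pointwise identity $(1+m)\mdiv\vecu=-\nabla m\cdot\vecu-\frac{h^2}{\mu}\nabla m\cdot\vecv$. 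This has three consequences absent in the general case: $\mdiv\vecu$ is supported in $\mathrm{supp}\,\nabla m$, a compact subset of $D\setminus\overline N$, hence vanishes near $\Gamma$ and its zero extension $\underline{\mdiv\vecu}$ has no jump across $\Gamma$; since $\nabla m$ is a fixed smooth function, $\|\mdiv\vecu\|_{\scH{s}(D)}\lesssim\|\vecu\|_{\scH{s}(D_0)}+h^{2}\|\vecv\|_{\scH{s}(D_0)}$ for every $s\ge0$, with $D_0$ a fixed interior neighbourhood of $\mathrm{supp}\,\nabla m$; and, since $m$ is a nonzero constant near $\Gamma$, (\ref{VectorialPotentialsutov}) gives $|\vecu_N|_{\scH{t}(\Gamma)}\lesssim h^{2}|\vecv_N|_{\scH{t}(\Gamma)}+h^{2}|\vecf_N|_{\scH{t}(\Gamma)}$ for every $t$.

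Next I would establish the interior regularity. On $D_0$ one has $-h^{2}\Delta\vecv-\mu\vecv=h^{2}\vecg$ and, using $\mcurl\mcurl=\nabla\mdiv-\Delta$, $-h^{2}\Delta\vecu-\mu(1+m)\vecu=h^{2}\big(\nabla\mdiv\vecu-m\vecv-(1+m)\vecf\big)$, with principal symbols $|\xi|^{2}-\mu$ and $|\xi|^{2}-(1+m)\mu$ invertible by hypothesis. Semiclassical interior elliptic regularity, bootstrapped through the identity of the previous step, then bounds any interior semiclassical norm of $\vecv$ (up to $\scH{2}$ when $\vecg\in\vecL^2(D)$, up to $\scH{4}$ when $\vecg\in\scH{2}(D)$) and of $\vecu$ in terms of $\|\vecu\|_{\vecL^2(D)}$, $\|\vecv\|_{\vecL^2(D)}$ and the data norms, the first two being controlled by Theorem~\ref{Regularity1}, whose right-hand sides collapse here to $\|\vecf\|_{\vecL^2(D)}+h^{2}\|\vecg\|_{\vecL^2(D)}$ for $\vecv$ and to $h^{2}\|\vecf\|_{\vecL^2(D)}+h^{4}\|\vecg\|_{\vecL^2(D)}$ for $\vecu$.

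The core step is to re-run the chain (\ref{Section2Step1Eqnu}), (\ref{Section2SubStep1Eqnv}), (\ref{Section2Substep1Eqnv}), (\ref{Section2Substep2Eqnu}), (\ref{Section2Step4Eqnv}), together with the quantities $\vecg_1,\dots,\vecg_5$ and Lemmas~\ref{1stRegularityuLemma}--\ref{1stRegularityuNLemma}, unchanged except that every interior norm $\scH{s}(D)$ is replaced by $\scH{s+2}(D)$ and every boundary norm $\scH{t}(\Gamma)$ by $\scH{t+2}(\Gamma)$. This shift is permissible for three reasons: the terms carrying $\nabla_h\underline{\mdiv\vecg}$ now vanish, so no regularity of $\vecg$ beyond the available one is ever demanded; the term $Q(\nabla_h\underline{\mdiv\vecu})$ improves by two orders because $\underline{\mdiv\vecu}$ is jump-free and carries the regularity of $\vecu$ itself, controlled via Step~1 and the interior estimates; and each boundary occurrence of $\vecu_N$ is replaced, via (\ref{VectorialPotentialsutov}), by $h^{2}\vecv_N+h^{2}\vecf_N$ in a two-orders-higher norm. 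Because the self-referential occurrences of $|\vecv_N|$, $|\vecJ(\vecv_T)|$, $|\gamma\vecv|$ and $\|\vecv\|$ still carry a positive power of $h$, the absorption for small $h$ that closes Lemmas~\ref{1stRegularityvNLemma}, \ref{1stRegularityvNuLemma} and Theorem~\ref{Regularity1} goes through verbatim; this yields $|\vecv|_{\scH{3/2}(\Gamma)}+|\vecJ(\vecv_T)|_{\scH{1/2}(\Gamma)}$ and $|\vecu_N|_{\scH{7/2}(\Gamma)}$ with the claimed right-hand side, and inserting them into the two-orders-up versions of (\ref{Section2SubStep1Eqnv}) and (\ref{Section2Step1Eqnu}), with the boundary-to-domain estimate (\ref{PreliminaryResultsBounaryToDomain}), produces $\|\vecv\|_{\scH{2}(D)}$ and $\|\vecu\|_{\scH{4}(D)}$ as stated. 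The second group of estimates is obtained by applying the same machinery shifted up by two further orders, now with $\vecg\in\scH{2}(D)$ and $\vecf\in\scH{4}(D)$, and using the first group whenever a lower-order norm of $(\vecu,\vecv)$ is required.

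The main obstacle is the verification underlying that core step: that a uniform two-order shift of every Sobolev index through the entire pseudodifferential chain is admissible. The sensitive points are that the $h^{2}$-weighted $\vecv$-contribution to $\mdiv\vecu$ never demands more regularity than the interior estimate can supply (it is here that the smallness factor $h^{2}$, rather than extra smoothness of $\vecg$, does the work), and that the exact powers of $h$ attached to the boundary-trace terms survive the shift so that the small-$h$ absorption still closes. The remainder is a repetition of the parametrix mapping properties, boundary-to-domain estimates and commutator bounds already established in Section~\ref{SectionRegularity1} and the Appendix.
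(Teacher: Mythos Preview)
Your proposal is essentially correct and follows the same strategy as the paper: exploit the divergence-free hypotheses to make $\mdiv\vecv=0$ and to force $\mdiv\vecu$ to be supported away from $\Gamma$, then re-run the boundary pseudodifferential analysis of Section~\ref{SectionRegularity1} at higher Sobolev scales and close by small-$h$ absorption. You correctly identify the three simplifications that make the shift possible (no $\nabla_h\underline{\mdiv\vecg}$ terms, $\mdiv\vecu$ supported in the interior, and the algebraic link $\vecu_N\sim h^{2}\vecv_N+h^{2}\vecf_N$), and these are exactly the points the paper exploits.

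The one noteworthy difference is the size of the bootstrap step. You propose to shift every index by two orders in one pass; the paper instead climbs one order at a time (first to $(\scH{1},\scH{3})$, then to $(\scH{2},\scH{4})$, and so on). The paper's reason for this is the estimate of $|\gamma_N Q\nabla_h\underline{\mdiv\vecu}|$ in the higher boundary norm: naively this would require $\|\mdiv\vecu\|_{\scH{2}}$, hence $\|\vecv\|_{\scH{1}}$ on the support of $\nabla m$, which is not yet available from Theorem~\ref{Regularity1}. The paper resolves this with Lemma~\ref{PreliminaryResultsDivu}, which gives an \emph{extra} $h$-gain for the trace of $Q\nabla_h f$ when $f$ vanishes near $\Gamma$, so that only $\|\mdiv\vecu\|_{\scH{1}}$ is needed. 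Your ``jump-free'' remark is the same observation, but you feed it through a separate interior-regularity argument for $\vecv$ on $D_0$ rather than invoking the lemma directly. Both routes work; the paper's one-order bootstrap avoids setting up nested interior cutoffs and keeps the dependence on already-proved global estimates more transparent, while your two-order jump is more compact once the interior control of $\vecv$ on $\mathrm{supp}\,\nabla m$ is established. If you keep your version, make the appeal to Lemma~\ref{PreliminaryResultsDivu} (or its equivalent) explicit, since that is precisely where the ``improves by two orders'' comes from.
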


\begin{proof}
We use similar arguments as in Section \ref{SectionRegularity1} and we shall
only highlight here the differences. We first prove that $\vecv \in \scH{1}(D)$ and $\vecu \in \scH{3}(D)$ if $\vecv \in \vecL^2(D)$ and $\vecu \in \scH{2}(D)$ for $\vecg \in \vecL^2(D)$ and $\vecf \in \scH{2}(D)$, then we can prove $\vecv \in \scH{2}(D)$ and $\vecu \in \scH{4}(D)$. 

\medskip

{1}. (Similarly to Lemma \ref{1stRegularityuLemma}) An a priori estimate for $\vecu$. 

\medskip

Since $\mdiv\left( \left( 1+m \right) \vecf \right)=0$ and $\mdiv \vecg=0$, Theorem \ref{Regularity1} yields
\begin{eqnarray} \label{Section6Step2Eqnu}
\|\vecu\|_{\scH{2}(D)} \lesssim h^2 \|\vecf\|_{\vecL^2(D)} +  h^4\|\vecg\|_{\vecL^2(D)}.
\end{eqnarray}

{2}. (Similarly to Lemma \ref{1stRegularityvNLemma} and Lemma \ref{1stRegularityvNuLemma}). An a priori estimate for $\vecv_N$. 

\medskip

The argument can also be divided into four steps.
Steps 1, 2 and 3 follow exactly the same way as in Section \ref{SectionRegularity1}. We
shall only indicate the changes in step 4.

\medskip

\textit{Step 4}.
From Step 4 of Lemma \ref{1stRegularityvNLemma} we have that
\begin{eqnarray*}
\mop\left( m((m+2)R-(1+m)\mu) \right) \vecv_N = h^{-2} \mop(r_2)\vecg_3 + h\mop(r_{1})\vecv_N.
\end{eqnarray*}
Then 
\begin{eqnarray*}
|\vecv_N|_{\vecH^{\frac{1}{2}}_{sc}(\Gamma)} \lesssim h^{-2}|\vecg_3|_{\vecH^{\frac{1}{2}}_{sc}(\Gamma)} + h|\vecv_N|_{\vecH^{-\frac{1}{2}}_{sc}(\Gamma)}
\end{eqnarray*}
and for $h$ small enough
\begin{eqnarray*}
|\vecv_N|_{\vecH^{\frac{1}{2}}_{sc}(\Gamma)} \lesssim h^{-2}|\vecg_3|_{\vecH^{\frac{1}{2}}_{sc}(\Gamma)} .
\end{eqnarray*}
Following the arguments in the proof of Lemma \ref{1stRegularityvNLemma}, the only
difference is to replace estimate (\ref{Regularity1EstimatefN}) by 
$$
|\vecf_N|_{\vecH^{\frac{1}{2}}_{sc}(\Gamma)} \lesssim h^{-\frac{1}{2}} \|\vecf\|_{\overline{\vecH}^1_{sc}(D)} 
$$ 
 Notice from  (\ref{SectionRegularity1Step4VJ}) and Theorem \ref{Regularity1} that
\begin{eqnarray*} 
|\vecv|_{\vecH^{-\frac{1}{2}}_{sc}(\Gamma)} \lesssim h^{\frac{3}{2}} \|\vecg\|_{\vecL^2(D)} + h^{-\frac{1}{2}} \|\vecf\|_{L^2(D)} .
\end{eqnarray*}
This gives the following estimate (corresponding to estimate (\ref{RegularityvNormal}) in Section \ref{SectionRegularity1})
\begin{eqnarray*}
|\vecv_N|_{\vecH^{\frac{1}{2}}_{sc}(\Gamma)} \lesssim h^{\frac{3}{2}} \|\vecg\|_{\vecL^2(D)} + h^{-\frac{1}{2}} \|\vecf\|_{\scH{1}(D)} + h^{\frac{1}{2}}\|\vecv\|_{\vecL^2(D)} + h^{-\frac{3}{2}}\|\vecu\|_{\scH{1}(D)} .
\end{eqnarray*}
Then Theorem \ref{Regularity1} yields
\begin{eqnarray} \label{SectionRegularity2Substep4vN}
|\vecv_N|_{\vecH^{\frac{1}{2}}_{sc}(\Gamma)} \lesssim h^{\frac{3}{2}} \|\vecg\|_{\vecL^2(D)} + h^{-\frac{1}{2}} \|\vecf\|_{\scH{1}(D)} .
\end{eqnarray}

{3}. (Similarly to Lemma \ref{1stRegularityuNLemma}) A priori estimate for $\vecu_N$.

\medskip

From Lemma \ref{1stRegularityuNLemma} of Section \ref{SectionRegularity1}
\begin{eqnarray*}
\mop(\lambda_2)\vecu_N&=& h^2 \mop(r_{-2}) (\mop(\rho_2)\vecv_N+\vecg_1)   \\
 &+&  h^2 \mop(r_{-1})\vecv_N  + \mop(r_{1})\vecg_2 + h\mop(r_{0}) \vecu_N .
\end{eqnarray*}
Then for small enough $h$
\begin{eqnarray*}
|\vecu_N|_{\vecH^{\frac{5}{2}}_{sc}(\Gamma)} &\lesssim& h^2 |\vecv_N|_{\vecH^{\frac{1}{2}}_{sc}(\Gamma)} +  h^2|\vecg_1|_{\vecH^{-\frac{1}{2}}_{sc}(\Gamma)} + |\vecg_2|_{\vecH^{\frac{5}{2}}_{sc}(\Gamma)} .
\end{eqnarray*}
As in estimate (\ref{Regularity1Step3estimateg2}), we need to estimate $|\gamma_N Q \nabla_h
\underline{\mdiv \vecu}|_{\vecH^{\frac{5}{2}}_{sc}(\Gamma)} $. The argument
here is different, since $\|\mdiv \vecu\|_{\scH{2}}$ can only be bounded by
$\|\vecv\|_{\scH{1}}$ from equation (\ref{Section2Step2Divu}). But $\vecv$ is
only in $\vecL^2(D)$. However,  from Lemma \ref{PreliminaryResultsDivu},
$$
|\gamma_N Q \nabla_h \underline{\mdiv \vecu}|_{\vecH^{\frac{5}{2}}_{sc}(\Gamma)} \lesssim h^{\frac{1}{2}}\|\mdiv \vecu\|_{\overline{H}_{sc}^1(D)}.
$$
Using estimate (\ref{SectionRegularity2Substep4vN}) and Theorem \ref{Regularity1}, direct calculations yield
\begin{eqnarray*}
|\vecu_N|_{\vecH^{\frac{5}{2}}_{sc}(\Gamma)} &\lesssim& h^{\frac{7}{2}} \|\vecg\|_{\vecL^2(D)} + h^{\frac{3}{2}} \|\vecf\|_{\scH{1}(D)} + h^{\frac{5}{2}}\|\vecv\|_{\vecL^2(D)} .
\end{eqnarray*}
From Theorem \ref{Regularity1} and estimate (\ref{Section6Step2Eqnu}) we now have that
\begin{eqnarray} \label{Section6Step4Eqnu} 
|\vecu_N|_{\vecH^{\frac{5}{2}}_{sc}(\Gamma)} &\lesssim& h^{\frac{7}{2}} \|\vecg\|_{\vecL^2(D)} + h^{\frac{3}{2}} \|\vecf\|_{\scH{1}(D)} .
\end{eqnarray}

{4}. New a priori estimates for $\vecv$ and $\vecu$.

\medskip

As in Section \ref{SectionRegularity1}, we have the following equation for  $\vecv$:
\begin{eqnarray*}
&&\mop(-\frac{m\rho_2(\rho_2-\rho_1+\lambda_2-\lambda_1)}{(\lambda_1-\lambda_2)(\lambda_1-\rho_2)(\rho_1-\lambda_2)(\rho_1-\rho_2)}+ \frac{m(\rho_2\lambda_2-\rho_1\lambda_1)}{(\lambda_1-\lambda_2)(\lambda_1-\rho_2)(\rho_1-\lambda_2)(\rho_1-\rho_2)}) \vecv\\
=&& -h^{-2} \left( h \gamma K_{-M} \uvecu + h^3 \gamma Q(m K_{-M}\uvecv) + h^4 \gamma Qm\tilde{Q} \underline{\vecg} + h^2 \gamma Q((1+m)\underline{\vecf}) + \frac{h}{i}\gamma Q( \nabla_h \underline{\mdiv \vecu}) \right) \\
+&& h^{-2}\mop(r_{-1}) \nabla_\Gamma (\vecu_N \cdot \nu) +  h^{-2}\mop(r_0)\vecu_N + \mop(r_{-3})\vecg_4 +h\mop(r_{-4}) \vecJ(\vecv_T) + h\mop(r_{-3})\vecv \\
:=&& \vecg_5.
\end{eqnarray*}
Then, using estimate (\ref{Section6Step4Eqnu}), we obtain
\begin{eqnarray*}
|\vecv|_{\vecH^{\frac{1}{2}}_{sc}(\Gamma)}+ |\vecJ(\vecv_T)|_{\vecH^{-\frac{1}{2}}_{sc}(\Gamma)} \lesssim h^{\frac{3}{2}} \|\vecg\|_{\vecL^2(D)} + h^{-\frac{1}{2}} \|\vecf\|_{\scH{1}(D)} + h^{\frac{1}{2}}\|\vecv\|_{\vecL^2(D)} .
\end{eqnarray*}
Therefore from equation (\ref{Section2SubStep1Eqnv}) we can obtain
\begin{eqnarray} \label{Section6Step5v}
\|\vecv\|_{\scH{1}(D)} \lesssim h^2 \|\vecg\|_{\vecL^2(D)} +  \|\vecf\|_{\scH{1}(D)} .
\end{eqnarray}
Then from equation (\ref{Section2Step1Eqnu}) we can obtain
\begin{eqnarray*}
\|\vecu\|_{\scH{3}(D)} \lesssim h^2 \|\vecv\|_{\scH{1}(D)} +  h^2\|\vecf\|_{\scH{1}(D)} + h\|\mdiv \vecu\|_{\overline{H}_{sc}^2(D)}+ h^{\frac{1}{2}} |\vecu_N|_{\vecH^{\frac{5}{2}}_{sc}(\Gamma)} .
\end{eqnarray*}
Since $\nabla m$ has compact support in $D$ and $\mdiv \left( (1+m)\vecf \right)=0$, then from equation (\ref{Section2Step2Divu}) we have that
\begin{eqnarray*}
\|\mdiv \vecu\|_{\overline{H}_{sc}^2(D)} \lesssim \|\vecu\|_{\scH{2}(D)}+ h^3 \|\vecv\|_{\scH{1}(D)} +  h^4 \|\vecg\|_{\vecL^2(D)} .
\end{eqnarray*}
Combining this inequality with (\ref{Section6Step4Eqnu}) and (\ref{Section6Step5v}) yields for small enough $h$ that
\begin{eqnarray*}
\|\vecu\|_{\scH{3}(D)} \lesssim h^2 \|\vecf\|_{\scH{1}(D)} +  h^4\|\vecg\|_{\vecL^2(D)} .
\end{eqnarray*}
We finally arrive at the following estimates
\begin{eqnarray*}
&&\|\vecv\|_{\scH{1}(D)} \lesssim h^2 \|\vecg\|_{\vecL^2(D)} +  \|\vecf\|_{\scH{1}(D)}, \\
&&\|\vecu\|_{\scH{3}(D)} \lesssim h^4\|\vecg\|_{\vecL^2(D)} + h^2\|\vecf\|_{\scH{1}(D)} .
\end{eqnarray*}

{5}. 
We use a bootstrap argument to prove the results of the theorem by repeating the above arguments line by line.
\end{proof} \proofend

%%%%%%%%%%%%%%%%%%%%%%%%%%%%%%%%%%%%%%%%%%%%%%%%%%%%
\section{The inverse of $\vecB_z$} \label{InverseBz}
In this section we will show that $\vecB_z$ has a bounded inverse for some $z$ with sufficiently large $|z|$. We begin with the following. Recall that 
\begin{eqnarray*}
C(m):=\{ \arg \frac{1}{n(x)} ; \; x \in \overline D\} .
\end{eqnarray*}
Before we prove the main results in this section, we first make a connection between the set $C(m)$ and the assumptions made in Theorem \ref{Regularity1}.
\begin{lemma} \label{assumCone}
If there exists $\theta$ such that $\theta \not \in C(m) \cup \{0\} \cup \{ \arg \left( \frac{n(x)+1}{n(x)}  \right) ; \; x
\in \Gamma\}$, then $\mu=e^{i\theta}$ satisfies the assumptions in Theorem \ref{Regularity1}, i.e.
$|\xi|^2-\mu \not = 0$ , $|\xi|^2- n(x)\mu \not = 0$ for any $\xi$ and $x \in \overline{D}$ and $R_0(x,\xi')-\frac{n(x)}{1+n(x)}\mu \not =0 $ for any $\xi'$ and $x \in \Gamma$.
\end{lemma}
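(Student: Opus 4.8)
The plan is to verify, one at a time, the three non-vanishing conditions that Theorem~\ref{Regularity1} requires of $\mu=e^{i\theta}$, each time arguing by contradiction and using the fact that $|\xi|^2$ and the principal symbol $R_0(x,\xi')$ of the tangential operator $R$ are real and non-negative (the latter because, as recalled in the Appendix, $R$ is a Laplace--Beltrami-type operator on $\Gamma$, hence has non-negative principal symbol, namely the metric quadratic form in $\xi'$). Throughout I use the branch convention $\arg z\in[0,2\pi)$ fixed at the start of the section, together with the fact---implicit already in the definitions of $C(m)$ and of $\{\arg((n(x)+1)/n(x));\,x\in\Gamma\}$---that $n(x)\neq 0$ on $\overline D$ and $n(x)+1\neq 0$ on $\Gamma$.

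First I would dispose of the condition $|\xi|^2-\mu\neq 0$: since $\theta\notin\{0\}$ we have $\theta\in(0,2\pi)$, so $\mu$ is not a non-negative real number (either $\operatorname{Im}\mu=\sin\theta\neq 0$, or $\theta=\pi$ and $\mu=-1<0$); as $|\xi|^2\in[0,\infty)$ for every $\xi$, it cannot equal $\mu$. Next, for $|\xi|^2-n(x)\mu\neq 0$, I would suppose $|\xi|^2=n(x)\mu$ for some $\xi$ and some $x\in\overline D$. The case $\xi=0$ is impossible, as it would force $n(x)\mu=0$ while $n(x)\neq 0$ and $\mu\neq 0$; hence $|\xi|^2>0$, so $n(x)\mu$ is a positive real, which forces $\arg n(x)+\theta\equiv 0\pmod{2\pi}$, i.e. $\theta=\arg(1/n(x))\in C(m)$, contradicting the hypothesis $\theta\notin C(m)$.

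Finally, the condition $R_0(x,\xi')-\frac{n(x)}{1+n(x)}\mu\neq 0$ on $\Gamma$ is handled in the same way. Here $n$ is the constant of Assumption~\ref{Assumptionmconstant} on $\Gamma$ and $n+1\neq 0$ there, so the expression is well defined. Assuming equality for some $x\in\Gamma$ and some $\xi'$, the facts $R_0(x,\xi')\geq 0$ and $\frac{n(x)}{1+n(x)}\mu\neq 0$ force $R_0(x,\xi')>0$, so $\frac{n(x)}{1+n(x)}\mu$ is a positive real; writing this out modulo $2\pi$ gives $\theta\equiv\arg\frac{1+n(x)}{n(x)}\pmod{2\pi}$, i.e. $\theta\in\{\arg((n(x)+1)/n(x));\,x\in\Gamma\}$, again a contradiction. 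Since all three hypotheses of Theorem~\ref{Regularity1} then hold for $\mu=e^{i\theta}$, the lemma follows.

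There is no serious obstacle: the only ingredients beyond elementary arithmetic of arguments modulo $2\pi$ are (i) that $|\xi|^2$ and $R_0$ are real and $\geq 0$, which is exactly where the structure of the operators built in the Appendix enters, and (ii) the bookkeeping needed to dispose of the degenerate frequencies $\xi=0$ and $\xi'=0$, where the desired inequalities hold trivially because $n\mu\neq 0$. The mild care point is keeping the chosen branch of $\arg$ consistent so that membership in $C(m)$ and in $\{\arg((n+1)/n)\}$ is read off correctly.
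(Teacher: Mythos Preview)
Your proof is correct and follows essentially the same contradiction argument as the paper: both deduce from a hypothetical equality that $\theta$ would have to lie in $C(m)\cup\{0\}\cup\{\arg((n+1)/n);\,x\in\Gamma\}$, using that $|\xi|^2$ and $R_0(x,\xi')$ are non-negative reals. Your version is simply more explicit, in particular in separating out the degenerate cases $\xi=0$ and $R_0=0$, which the paper leaves implicit.
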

\begin{proof}
Assume on the contrary that there exists $ \xi \in \R^d$ such that
$$
\frac{1}{n(x)}|\xi|^2-\mu=0 \quad \mbox{or} \quad |\xi|^2-\mu=0 \quad \mbox{for some} \quad x \in  \overline{D}
$$
or 
$$
R_0(x,\xi')-\frac{n(x)}{1+n(x)}\mu =0 \quad \mbox{for some} \quad x \in  \Gamma.
$$
This implies $\theta=\mbox{arg} \,\mu \in C(m) \cup \{0\} \cup \{ \arg \left( \frac{n(x)+1}{n(x)}  \right) ; \; x
\in \Gamma\} $. This contradicts the assumption. Hence we have proved the lemma.
\end{proof} \proofend

Now we are ready to prove the following.
\begin{theorem} \label{InverseBLambda}
Assume that assumption \ref{Assumptionmconstant} holds and that $C(m) \cup \{0\} \cup \{ \arg \left( \frac{n(x)+1}{n(x)}  \right) ; \; x
\in \Gamma\} \not= [0, 2\pi[$.  Then there exists $z$ with sufficiently large
$|z|>0$ such that $\vecB_z$ has a bounded inverse $\vecR_z:
\vecF(D) \times \vecG(D) \rightarrow \vecU(D)
\times \vecV(D)$.
\end{theorem}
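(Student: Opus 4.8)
The plan is to pick $z$ of the form $z=\lambda=h^{-2}\mu$, where $\mu=e^{i\theta}$ with $\theta$ supplied by the hypothesis together with Lemma~\ref{assumCone}, and $h>0$ small; then $|z|=h^{-2}$ is as large as we wish and, for $h$ small enough, all the non-vanishing conditions required in Theorem~\ref{Regularity1} are in force. Bounded invertibility of $\vecB_z$ is then obtained in two stages: an a priori estimate in ordinary Sobolev norms (which yields injectivity, closed range, and boundedness of $\vecB_z^{-1}$ on its range), followed by surjectivity.

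For the a priori estimate, recall that for each fixed $h$ the semiclassical norms $|\cdot|_{\vecH^s_{sc}(\Gamma)}$ and $\|\cdot\|_{\scH{s}(D)}$ are equivalent to the usual Sobolev norms up to powers of $h$. Combining the two estimates of Theorem~\ref{Regularity1} with the divergence bounds \eqref{RegularityDivvDivg} and \eqref{RegularityDivuH1} from Subsection~\ref{RegularityCalculation}, one gets $C>0$ and $h_0>0$ so that, for every $h\le h_0$ and every $(\vecu,\vecv)\in\vecU(D)\times\vecV(D)$ in the domain of $\vecB_z$ (so that in particular $\gamma_t\vecu=\gamma_t\mcurl\vecu=0$) with $\vecB_z(\vecu,\vecv)=(\vecf,\vecg)$,
\begin{eqnarray*}
\|\vecv\|_{\vecL^2(D)}+\|\mdiv\vecv\|_{H^1(D)}+\|\vecu\|_{\vecH^2(D)}+\|\mdiv\vecu\|_{H^1(D)}\;\le\; C\,\|(\vecf,\vecg)\|_{\vecF(D)\times\vecG(D)}.
\end{eqnarray*}
Taking $(\vecf,\vecg)=(0,0)$ forces $\vecu=\vecv=0$, so $\vecB_z$ is injective; the same inequality shows that $\mathrm{Ran}\,\vecB_z$ is closed in $\vecF(D)\times\vecG(D)$ and that the inverse, defined a priori on $\mathrm{Ran}\,\vecB_z$ only, maps boundedly into $\vecU(D)\times\vecV(D)$. (Only Theorem~\ref{Regularity1} is needed here; Theorem~\ref{Regularity2} enters later, for the Hilbert--Schmidt estimates.)

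The remaining and main point is surjectivity, $\mathrm{Ran}\,\vecB_z=\vecF(D)\times\vecG(D)$; this is delicate precisely because $\vecv$ carries no boundary condition and is determined only implicitly, through solvability of the overdetermined $\mcurl\mcurl$ problem for $\vecu$. I would establish it by showing the cokernel is trivial, via duality. Writing the formal transpose $\vecB_z^{\top}$ relative to the natural $\vecL^2$ pairings and integrating by parts twice in each equation, the boundary terms generated by the first unknown vanish because of its homogeneous Cauchy data, while those generated by the second unknown force the associated test field to satisfy the homogeneous Cauchy conditions; the conclusion is that $\vecB_z^{\top}$ is, up to interchanging the two unknowns, the operator $\vecB_{\bar z}$ built from the conjugate index $\bar n$. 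Since $C(\bar m)=-C(m)$ and $\arg\big(\tfrac{\bar n(x)+1}{\bar n(x)}\big)=-\arg\big(\tfrac{n(x)+1}{n(x)}\big)$, the hypothesis $C(m)\cup\{0\}\cup\{\arg(\tfrac{n(x)+1}{n(x)})\;;\;x\in\Gamma\}\neq[0,2\pi[$ is invariant under complex conjugation, so the entire analysis of Section~\ref{Regularity}, with $\mu$ replaced by $\bar\mu=e^{-i\theta}$, applies to $\vecB_z^{\top}$ and yields $\ker\vecB_z^{\top}=\{0\}$. Hence the closed subspace $\mathrm{Ran}\,\vecB_z$ has trivial annihilator, so it equals $\vecF(D)\times\vecG(D)$, and $\vecB_z$ is a bijection with bounded inverse $\vecR_z:\vecF(D)\times\vecG(D)\to\vecU(D)\times\vecV(D)$, as claimed.

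The hard part is clearly this last step: one must verify carefully that the formal adjoint again belongs to the class of Maxwell transmission operators treated in Section~\ref{Regularity} (the boundary bookkeeping being the subtle ingredient) and that the cone hypothesis is conjugation-stable. A conceivable alternative is a method-of-continuity argument along a path deforming the coupling term $m\vecv$, but such a deformation would have to be arranged so as not to violate the non-vanishing conditions of Theorem~\ref{Regularity1}, which is less transparent than the duality route sketched above.
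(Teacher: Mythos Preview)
Your choice of $z=h^{-2}e^{i\theta}$ and the a priori estimate giving injectivity and closed range are exactly as in the paper. The surjectivity argument, however, has a genuine gap.

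You compute the annihilator of $\mathrm{Ran}\,\vecB_z$ with respect to the $\vecL^2$ pairing and show it vanishes; this only yields that the range is dense in $\vecL^2(D)\times\vecL^2(D)$, not in $\vecF(D)\times\vecG(D)$. A subspace that is closed in $\vecF\times\vecG$ can be $\vecL^2$-dense and still proper (think of $\{f\in H^1(0,1):f(0)=0\}$, closed in $H^1$, dense in $L^2$). The $\vecF\times\vecG$ norm carries extra divergence regularity that the $\vecL^2$ transpose does not see, so ``trivial $\vecL^2$-annihilator'' does not give ``equals $\vecF\times\vecG$''.

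The paper handles this by a Helmholtz decomposition of the target $(\vecp,\vecq)=(\vecp^d,\vecq^d)+(\vecp^c,\vecq^c)$ into a divergence-free piece and a curl-free piece with vanishing tangential trace. On the divergence-free subspace the $\vecF\times\vecG$ inner product coincides with $\vecL^2$, and there a duality argument (in spirit yours) works: the annihilator $(\vecp^d,\vecq^d)$ satisfies a system which, after the substitution $\vecp_1=z\,\overline{\vecq^d}-\overline{\vecp^d}/(1+m)$, becomes exactly the form \eqref{FormulationEu}--\eqref{FormulationEv} with the \emph{same} $z$ and $m$ (up to a harmless sign on the coupling), so Theorem~\ref{Regularity1} applies directly. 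Your detour through $\bar z$ and $\bar n$ is therefore unnecessary; moreover, the raw adjoint system has the roles of the two equations interchanged and does not match the structure required by Theorem~\ref{Regularity1} until one performs such a substitution, which you do not mention. The curl-free piece $(\vecp^c,\vecq^c)$ requires a separate, direct argument: since $\mcurl\vecp^c=0$ and $\nu\times\vecp^c|_\Gamma=0$ force $\vecp^c\in\vecU(D)$, one may test with $(\vecu,\vecv)=(\vecp^c,0)$, compute $\vecB_z(\vecp^c,0)=(-z\vecp^c,0)$, and read off $\vecp^c=0$ (similarly for $\vecq^c$). This second step, needed precisely because the target space encodes divergence regularity, is absent from your sketch.
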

\begin{proof}
Since $C(m) \cup \{0\} \cup \{ \arg \left( \frac{n(x)+1}{n(x)}  \right) ; \; x
\in \Gamma\} \not= [0, 2\pi[$ , then from Lemma \ref{assumCone} there exists $\mu = e^{i\theta}$ satisfying the assumption
of Theorem \ref{Regularity1}.  Let $h >0$ and define $z:=\mu
h^{-2}$. Let $\vecB_z(\vecu, \vecv)= (\vecf,
\vecg)$ where $(\vecu, \vecv) \in \vecU(D)
\times \vecV(D)$. From Theorem \ref{Regularity1}, for a sufficiently small $h$, we have that
\begin{eqnarray} \label{InverseBLambdaEstimatev}
\|\vecv\|_{\vecL^2(D)} \lesssim |z|^{-1} \|\vecg\|_{\vecL^2(D)} + |z|^{-\frac{3}{2}} \|\mdiv \vecg\|_{\scH{1}(D)}+  \|\vecf\|_{\vecL^2(D)} + \|\mdiv \left( (1+m)\vecf \right) \|_{\scH{1}(D)} 
\end{eqnarray}
and
\begin{eqnarray}
&&\|\vecu\|_{\vecL^2(D)}+ |z|^{-\frac{1}{2}}\|\vecu\|_{\vecH^1(D)} + |z|^{-1}\|\vecu\|_{\vecH^2(D)} \nonumber \\
&\lesssim& |z|^{-1} \left(\|\vecf\|_{\vecL^2(D)} +\|\mdiv \left( (1+m)\vecf \right) \|_{\scH{1}(D)} \right) +  |z|^{-2}\|\vecg\|_{\vecL^2(D)} + |z|^{-\frac{5}{2}} \|\mdiv \vecg\|_{\scH{1}(D)}   \label {InverseBLambdaEstimateu}
\end{eqnarray}
From (\ref{RegularityDivvDivg}) (with s=1), we have that
\begin{eqnarray} \label{InverseBLambdaEstimateDivv}
\|\mdiv \vecv\|_{H^1(D)} \lesssim |z|^{-1} \|\mdiv \vecg\|_{H^1(D)}.
\end{eqnarray}
Therefore $\vecB_z$ is injective and has closed range in $\vecF(D)
\times \vecG(D)$ (the latter follows from a Cauchy sequence
argument). 

\medskip

Now we prove that $\vecB_z$ has dense range. The argument will be divided into three steps.

\medskip
 
\textit{Step 1}: First we show that for any $(\vecp^d, \vecq^d) \in \vecF(D) \times \vecG(D)$ with $\mdiv\left( \left(1+m\right) \vecp^d \right)=0$ and $\mdiv \vecq^d =0$, there exists $(\vecu_{1,\ell},\vecv_{1,\ell}) \in \vecU(D) \times  \vecV(D)$ such that
$$
\vecB_z(\vecu_{1,\ell},\vecv_{1,\ell}) \to (\vecp^d,\vecq^d) \quad \mbox{in} \quad \vecF(D) \times \vecG(D).
$$
Indeed assume that $(\vecp^d, \vecq^d) \in \vecF(D) \times \vecG(D)$ with $\mdiv\left( \left(1+m\right) \vecp^d \right)=0$ and $\mdiv \vecq^d =0$ and that
$$
\left<\vecB_z(\vecu,\vecv), (\vecp^d, \vecq^d) \right>=0, \quad \forall (\vecu,\vecv) \in \vecU(D) \times  \vecV(D)
$$
where $\left< \cdot , \cdot\right>$ denotes the natural $\vecF(D) \times \vecG(D)$
inner product. It is sufficient to show that $\vecp^d=0$ and $\vecq^d=0$ to conclude the proof in this step. As $(\vecp^d, \vecq^d)$ satisfies $\mdiv\left( \left(1+m\right) \vecp^d \right)=0$ and $\mdiv \vecq^d =0$, then the inner product reduces to the $\vecL^2$ inner product. Letting $(\vecu,\vecv) \in {\bf C}^\infty_0(D) \times {\bf
  C}^\infty_0(D)$, one gets, with $\tilde \vecp := \overline{\vecp^d}/({1+m})  $,
\begin{eqnarray*}
&\mcurl \mcurl \overline{\vecq^d}- z \overline{\vecq^d} -m \tilde \vecp= 0 \quad &\mbox{in} \quad D   \\
&\mcurl \mcurl \tilde \vecp- z(1+m) \tilde \vecp = 0 \quad &\mbox{in} \quad D  
\end{eqnarray*}
in the distributional sense. 
We observe that $\mcurl \mcurl \overline{\vecq^d} \in \vecL^2(D)$ and therefore
the tangential traces $\nu\times \mcurl \overline{\vecq^d}$ and
$\nu \times \overline{\vecq^d}$ are well defined in $\vecH^{-3/2}(\Gamma)$ and
$\vecH^{-1/2}(\Gamma)$ respectively. Since $\vecu \times \nu=0$ and $\mcurl \vecu \times \nu=0$ on $\Gamma$, then
for all $\vecv \in {\bf C}^\infty(\overline D)$ we have that
$$
\int_\Gamma (\nu\times \mcurl \overline{\vecq^d})\cdot\vecv\,ds- \int_\Gamma (\nu\times \mcurl \vecv)\cdot \overline{\vecq^d}\,ds=0
$$
where the integrals are understood as duality products. Hence 
$$
\nu \times \overline{\vecq^d} = 0 \quad \mbox{and} \quad \nu \times \mcurl \overline{\vecq^d} = 0\quad \mbox{on} \quad \Gamma
$$
(see for instance \cite[Lemma 3.1]{H}). Now Let $\vecp_1=z\overline{\vecq^d} - \tilde \vecp$. Then one gets
\begin{eqnarray}
&\mcurl \mcurl \overline{\vecq^d}- z(1+m) \overline{\vecq^d} + m \vecp_1= 0 \quad &\mbox{in} \quad D   \label{InverseBLambdaEqnq}\\
&\mcurl \mcurl \vecp_1- z \vecp_1 = 0 \quad &\mbox{in} \quad D . \label {InverseBLambdaEqnp1}
\end{eqnarray}
Now we want to apply Theorem \ref{Regularity1} (one can check that we can relax the condition $\mcurl \vecq^d \in \vecL^2(D)$ from the proof of Theorem \ref{Regularity1}) to
$(\overline{\vecq^d}, \vecp_1)$. Since $z=\mu h^{-2}$ and $\mu$ satisfies the assumption in Theorem \ref{Regularity1}, we obtain  $\overline{\vecq^d} =0,\,
\vecp_1=0$ which implies $\vecp^d=0, \, \vecq^d=0$. This proves the first part.

\medskip

\textit{Step 2}: We show that for any given $(\vecp^c, \vecq^c) \in \vecF(D) \times \vecG(D)$ with $\mcurl \vecp^c =0$, $\vecp^c \times \nu |_\Gamma=0$ and $\mcurl \vecq^c =0$, $\vecq^c \times \nu |_\Gamma=0$, there exists $(\vecu_{2,\ell}, \vecv_{2,\ell})$ such that
$$
\vecB_z(\vecu_{2,\ell},\vecv_{2,\ell})  \to (\vecp^c,\vecq^c) \quad \mbox{in} \quad \vecF(D) \times \vecG(D).
$$
Assume 
$$
\left<\vecB_z(\vecu,\vecv), (\vecp^c, \vecq^c) \right>=0, \quad \forall (\vecu,\vecv) \in \vecU(D) \times  \vecV(D).
$$
It is sufficient to show $\vecp^c=0$ and $\vecq^c=0$ to conclude the proof in this step. Indeed  from $\mcurl \vecp^c=0$, $\mdiv
\left( (1+m)\vecp^c\right) \in H^1(D)$ and $ \vecp^c \times \nu|_\Gamma=0$, one
gets $\vecp^c \in \vecH^2(D)$ (see \cite{ABDG}), then $\mcurl \vecp^c=0$
implies $\vecp^c \in \vecU(D)$. We obviously have
$\vecq^c \in \vecV(D)$. Then, letting $\vecu=\vecp^c$ and $\vecv=0$, one gets 
$$
\|\vecp^c\|_{\vecF(D)} =0 .
$$
This implies $\vecp^c=0$. Second, let $\vecv=\vecq^c$ which implies
$$
\|\vecq^c\|_{\vecG(D)} =0
$$
and therefore $\vecq^c=0$. 

\medskip

\textit{Step 3}: Now we are ready to prove that $\vecB_z$ has dense range in $\vecF(D) \times \vecG(D)$. Indeed let $(\vecp, \vecq) \in \vecF(D) \times \vecG(D)$. By the Helmholtz decomposition (see for instance
\cite{KH}), there exist unique  $\vecp^d \in \vecL^2(D)$, $\vecp^c \in \vecL^2(D)$ and $\vecq^d \in \vecL^2(D)$, $\vecq^c \in \vecL^2(D)$ such that
\begin{equation} 
\vecp=\vecp^d + \vecp^c, \quad \vecq=\vecq^d + \vecq^c
\end{equation}
where
\begin{eqnarray*}
\mdiv \left( \left( 1+m \right)\vecp^d \right) =0, \quad \mcurl \vecp^c =0, \quad \vecp^c \times \nu|_\Gamma=0.
\end{eqnarray*}
\begin{eqnarray*}
\mdiv \vecq^d=0, \quad \mcurl \vecq^c =0, \quad \vecq^c \times \nu|_\Gamma=0.
\end{eqnarray*}
The existence of $(\vecp^d, \vecp^c)$ is guaranteed by the strict positiveness of $\Re(1+m)$. As shown above, there exists $(\vecu_{1,\ell},\vecv_{1,\ell}) \in \vecU(D) \times  \vecV(D)$ and $(\vecu_{2,\ell}, \vecv_{2,\ell})  \in \vecU(D) \times  \vecV(D) $ such that
$$
\vecB_z(\vecu_{1,\ell},\vecv_{1,\ell}) \to (\vecp^d,\vecq^d) \quad \mbox{in} \quad \vecF(D) \times \vecG(D)
$$
and
$$
\vecB_z(\vecu_{2,\ell},\vecv_{2,\ell})  \to (\vecp^c,\vecq^c) \quad \mbox{in} \quad \vecF(D) \times \vecG(D).
$$
Now let $\vecu_\ell= \vecu_{1,\ell} + \vecu_{2,\ell}$ and $\vecv_\ell= \vecv_{1,\ell}  +\vecv_{2,\ell}$. Then
$$
\vecB_z(\vecu_\ell,\vecv_\ell) \to (\vecp^d+ \vecp^c, \vecq^d+ \vecq^c) = (\vecp,\vecq)
$$
in $\vecF(D) \times \vecG(D)$. Now we have proved that $\vecB_z$ has dense range in $\vecF(D) \times \vecG(D)$. Since $\vecB_z$ is injective and has closed dense range in $\vecF(D) \times \vecG(D)$, $\vecR_z:=\vecB_z^{-1}$ is well-defined. 
\end{proof} \proofend 
%%%%%%%%%%%%%%%%%%
\section{Main results on transmission eigenvalues} \label{Main}
We shall state and prove here the main results of our paper on the existence of
transmission eigenvalues and the completeness  of associated eigenvectors. The
results of this section heavily rely on the regularity results obtained in section \ref{Regularity}.

Let us first introduce the Helmholtz decomposition. The motivation for introducing Helmholtz decomposition is to get the desired compact imbedding (which will be proved to be a Hilbert-Schmidt operator) for Maxwell's equations.
 For any $\vecu \in
\vecL^2(D)$ there exists a unique  $\vecu^d \in \vecL^2(D)$ and $\vecu^c \in \vecL^2(D) $ such that
\begin{equation} \label{decomposition}
\vecu=\vecu^d + \vecu^c
\end{equation}
and
\begin{eqnarray*}
\mdiv \left( \left( 1+m \right)\vecu^d \right) =0, \quad \mcurl \vecu^c =0, \quad \vecu^c \times \nu|_\Gamma=0.
\end{eqnarray*}
This is guaranteed by the strict positiveness of $\Re(1+m)$ (see for instance
\cite{KH}). We now define $\vecP^d  $ as the projection operator in $
\vecL^2(D) \times \vecL^2(D)$ defined by
$$\vecP^d (\vecu,\vecv)= (\vecu^d,\vecv)$$
where $\vecu^d$ is defined by \eqref{decomposition}. 

\medskip

For $z$ chosen as in Theorem \ref{InverseBLambda}, we now consider the operator
$$\vecS_z:=\vecP^d \vecR_z : \vecH(D) \to \vecH(D)$$ 
with 
$$\vecH(D):=\{\vecu \in \vecU(D); \mdiv \left( \left(1+m\right)\vecu\right)=0\}  \times \{\vecv \in \vecL^2(D); \mdiv  \vecv =0\} .
$$

Since $\vecH(D)$ is a subspace
of $\vecH^2(D) \times \vecG$, we also get from  Theorem \ref{Regularity2} that $\vecS_z^2$
continuously map $\vecH(D)$ into $\vecH^6(D) \times \vecH^4(D)$.  
Observing that the $\vecH^2(D)\times \vecL^2(D)$ norm is an equivalent norm in
$\vecH(D)$, we have from \cite[Lemma 4.1]{R1} (see also \cite{A}) that  $\vecS_z^2 : \vecH(D) \to
\vecH(D)$  is a Hilbert-Schmidt operator.

\medskip

We shall now apply Agmon's theory on the spectrum of Hilbert-Schmidt
operators in \cite{A} to get the desired main results. More
specifically we shall apply the result of the following lemma that is a direct
consequence of Proposition 4.2 and the proof of Theorem 5 in \cite{R1}. 

\begin{lemma} \label{SpectralLemma}
Let $H$ be a Hilbert space and $S$ be a bounded linear operator from $H$ to $H$.  If $\lambda^{-1}$ is in the resolvent of $S$, define 
$$
(S)_\lambda=S(I-\lambda S)^{-1}.
$$
Assume $S^p : H \to H$ is a Hilbert-Schmidt operator for some $p \ge 2$. For
the operator $S$, assume there exists $N$ rays with bounded growth where the
angle between any two adjacent rays is less that $\frac{\pi}{2p}$: more
precisely 
assume there exist $0\le \theta_1 < \theta_2 <\cdots <\theta_N < 2\pi$ such that $\theta_k-\theta_{k-1}<\frac{\pi}{2p}$ for $k=2,\cdots,N$ and $2\pi-\theta_N+\theta_1<\frac{\pi}{2p}$ satisfying the condition that there exists $r_0>0$, $c>0$ such that $sup_{r\ge r_0}\|(S)_{re^{i\theta_k}}\| \le c$ for $k=1,\cdots,N$. Then the space spanned by the nonzero generalized eigenfunctions of $S$ is dense in the closure of the range of $S^p$. 
\end{lemma}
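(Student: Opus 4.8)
\begin{proof}
The plan is to recover this from the classical completeness theory for non-selfadjoint compact operators, i.e.\ to reproduce the argument underlying \cite[Proposition 4.2 and Theorem 5]{R1} (in the spirit of \cite{A}), so I only outline the structure. First I would record the spectral picture: since $S^p$ is Hilbert--Schmidt it is compact, so $\sigma(S^p)\setminus\{0\}$ consists of finite-multiplicity eigenvalues accumulating only at $0$, and using the commuting factorization $\lambda^p I-S^p=\prod_{j=0}^{p-1}\bigl(e^{2\pi ij/p}\lambda I-S\bigr)$ together with the spectral mapping theorem, the same holds for $\sigma(S)\setminus\{0\}$. Denote by $\{\mu_j\}$ the nonzero eigenvalues of $S$, by $P_j$ the associated Riesz projections, and by $\mathcal N_j:=P_jH$ the finite-dimensional generalized eigenspaces. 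Since $S|_{\mathcal N_j}$ is invertible, $\mathcal N_j=S^p\mathcal N_j\subseteq\mathrm{Ran}\,S^p$, so the closed span $M$ of all nonzero generalized eigenfunctions of $S$ satisfies $M\subseteq\overline{\mathrm{Ran}\,S^p}$; since both are closed subspaces, it suffices to prove $M^{\perp}\cap\overline{\mathrm{Ran}\,S^p}=\{0\}$.

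To this end, fix $f\in\overline{\mathrm{Ran}\,S^p}$ with $f\perp M$ and fix $h\in H$ arbitrary, and consider $F(\lambda):=\langle(I-\lambda S)^{-1}h,f\rangle$. The function $\lambda\mapsto(I-\lambda S)^{-1}$ is meromorphic on $\C$ with poles exactly at the $\mu_j^{-1}$, and the coefficients of its principal part at $\mu_j^{-1}$ have range in $\mathcal N_j$; since $f\perp\mathcal N_j$, the principal part of $F$ vanishes there, so $F$ is entire. For growth I would write $(I-\lambda S)^{-1}=\bigl(\sum_{k=0}^{p-1}\lambda^kS^k\bigr)(I-\lambda^pS^p)^{-1}$ and invoke the resolvent estimate for Hilbert--Schmidt operators --- Carleman's inequality applied to the regularized determinant $\det_2(I-wS^p)$, an entire function of order $\le2$, exactly as in \cite[Prop.~4.2]{R1}: outside a thin family of exceptional disks (in particular along a sequence of expanding circles) one gets $\|(I-\lambda S)^{-1}\|\lesssim(1+|\lambda|)^{p-1}\exp\bigl(c|\lambda|^{2p+\varepsilon}\bigr)$, whence $F$ has order at most $2p+\varepsilon$. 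On each ray $\lambda=re^{i\theta_k}$ the identity $(I-\lambda S)^{-1}=I+\lambda(S)_\lambda$ and the hypothesis $\|(S)_{re^{i\theta_k}}\|\le c$ give $\|(I-re^{i\theta_k}S)^{-1}\|\le1+cr$, hence $|F(re^{i\theta_k})|\le C(1+r)$.

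The decisive step is a Phragm\'en--Lindel\"of argument. The rays $\theta_1,\dots,\theta_N$ divide $\C$ into $N$ sectors, each of opening $\delta_k$ strictly less than $\pi/(2p)$; choosing $\varepsilon>0$ small enough that $2p+\varepsilon<\pi/\delta_k$ for every $k$, the linear bound on $F$ along the two bounding rays together with the order-$(2p+\varepsilon)$ bound inside force $|F(\lambda)|\le C(1+|\lambda|)$ on each closed sector. As the sectors cover $\C$, the entire function $F$ has linear growth and is therefore affine, so matching this with the Taylor expansion $F(\lambda)=\sum_{n\ge0}\lambda^n\langle S^nh,f\rangle$ gives $\langle S^nh,f\rangle=0$ for all $n\ge2$ and all $h$, i.e.\ $(S^{*})^2f=0$. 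Hence $f\in\ker(S^{*})^2=(\mathrm{Ran}\,S^2)^{\perp}\subseteq(\mathrm{Ran}\,S^p)^{\perp}$ since $p\ge2$, and combined with $f\in\overline{\mathrm{Ran}\,S^p}$ this forces $f=0$; that is $M^{\perp}\cap\overline{\mathrm{Ran}\,S^p}=\{0\}$, as required.

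I expect the one genuinely technical ingredient to be the resolvent growth estimate of the second paragraph --- combining Carleman's inequality with a minimum-modulus lower bound for the order-$2$ determinant $\det_2(I-wS^p)$ so as to control $\|(I-\lambda S)^{-1}\|$ away from the exceptional disks; the Phragm\'en--Lindel\"of step and the bookkeeping with Riesz projections are then routine. Since this is precisely the content of \cite[Proposition 4.2]{R1} and the argument of \cite[Theorem 5]{R1}, in the final write-up I would simply invoke those results.
\end{proof} \proofend
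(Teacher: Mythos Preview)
The paper does not actually prove this lemma: it is stated without proof, introduced as ``a direct consequence of Proposition 4.2 and the proof of Theorem 5 in \cite{R1}.'' Your proposal lands in exactly the same place---you explicitly say that in the final write-up you would simply invoke those results from \cite{R1}---so there is no discrepancy in approach. The sketch you give (Riesz projections to show $F$ is entire, Carleman/minimum-modulus to bound the order by $2p+\varepsilon$, linear bound on the rays via $(I-\lambda S)^{-1}=I+\lambda(S)_\lambda$, Phragm\'en--Lindel\"of in each sector of opening $<\pi/(2p)$, then matching Taylor coefficients to get $(S^*)^2f=0$) is a correct outline of the Agmon argument behind \cite{R1}; your flagging of the resolvent-growth step as the one genuinely technical point is accurate, and deferring it to \cite{R1} is precisely what the paper does.
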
 
We shall first apply this lemma to the operator $\vecS_z$, then deduce the
spectral decomposition of the operator $\vecB_z$ and the main result on
transmission eigenvalues. In order to prove the existence of rays with bounded
growth  we need the following two lemmas on $(\vecR_z)_\lambda$ which will be used in the proof of Theorem \ref{TheoremPdRLambdaPd}.

\begin{lemma} \label{InverseSz} Let $z\in \C$ such $\vecR_z=\vecB_z^{-1}$ is
  well defined as in Theorem \ref{InverseBLambda}.   Then one has the following identities:
$$
\vecP^d \vecR_z \vecP^d \vecB_z = \mathbf{I}, \quad \mbox{and} \quad \vecP^d \vecB_z \vecP^d \vecR_z = \mathbf{I}
$$
where $\mathbf{I}$ is the identity operator on $\vecH(D)$. 
\end{lemma}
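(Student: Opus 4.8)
The plan is to exploit the fact that $\vecB_z$ leaves invariant the ``curl‑free, tangentially vanishing'' directions, on which it acts merely by multiplication by $-z$, so that modulo these directions $\vecB_z$ and $\vecR_z$ are inverse to each other. Write $X^c$ for the subspace of $\vecL^2(D)$ of fields $\vecp$ with $\mcurl\vecp=0$ and $\vecp\times\nu|_\Gamma=0$, and $X^d:=\{\vecu\in\vecL^2(D);\ \mdiv((1+m)\vecu)=0\}$, so that (by the strict positivity of $\Re(1+m)$) the Helmholtz decomposition gives the direct sum $\vecL^2(D)=X^d\oplus X^c$ and $\vecP^d$ is, in the first slot, the projection onto $X^d$ along $X^c$. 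Two elementary observations drive the proof. First, every $(\vecu,\vecv)\in\vecH(D)$ has $\vecu\in X^d$, so $\vecP^d$ restricts to the identity on $\vecH(D)$; moreover $\vecH(D)$ is contained in the domains of $\vecB_z$ and of $\vecR_z$, and $\vecR_z\vecB_z=\mathbf I$, $\vecB_z\vecR_z=\mathbf I$ by Theorem \ref{InverseBLambda}. Second, if $\vecp\in X^c$ with $\mdiv((1+m)\vecp)\in H^1(D)$, then $\vecp\in\vecH^2(D)\cap\vecU(D)$ (by the embedding of \cite{ABDG}, exactly as in Step 2 of the proof of Theorem \ref{InverseBLambda}) and, since $\mcurl\mcurl\vecp=0$, a direct inspection of (\ref{FormulationEu})--(\ref{FormulationEv}) gives $\vecB_z(\vecp,0)=(-z\,\vecp,0)$; as $z\neq0$ and $-z\,\vecp\in X^c$, this is the same as $\vecR_z(\vecp,0)=(-z^{-1}\vecp,0)$ for every such $\vecp$.

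For the first identity, let $(\vecu,\vecv)\in\vecH(D)$ and set $(\vecf,\vecg):=\vecB_z(\vecu,\vecv)\in\vecF(D)\times\vecG(D)$. Decompose $\vecf=\vecf^d+\vecf^c$ with $\vecf^d\in X^d$, $\vecf^c\in X^c$; since $\mdiv((1+m)\vecf^d)=0$ we have $\mdiv((1+m)\vecf^c)=\mdiv((1+m)\vecf)\in H^1(D)$, so the second observation applies to $\vecf^c$. By linearity,
$$\vecP^d(\vecf,\vecg)=(\vecf^d,\vecg)=(\vecf,\vecg)-(\vecf^c,0)=\vecB_z(\vecu,\vecv)-\vecB_z(-z^{-1}\vecf^c,0)=\vecB_z\big(\vecu+z^{-1}\vecf^c,\,\vecv\big),$$
whence $\vecR_z\vecP^d\vecB_z(\vecu,\vecv)=(\vecu+z^{-1}\vecf^c,\vecv)$; applying $\vecP^d$ once more and using $\vecu\in X^d$, $\vecf^c\in X^c$ together with uniqueness of the decomposition gives $\vecP^d\vecR_z\vecP^d\vecB_z(\vecu,\vecv)=(\vecu,\vecv)$.

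The second identity is symmetric. For $(\vecu,\vecv)\in\vecH(D)\subset\vecF(D)\times\vecG(D)$, put $(\vecu',\vecv'):=\vecR_z(\vecu,\vecv)\in\vecU(D)\times\vecV(D)$, so $\vecB_z(\vecu',\vecv')=(\vecu,\vecv)$, and decompose $\vecu'=(\vecu')^d+(\vecu')^c$ with $(\vecu')^d\in X^d$, $(\vecu')^c\in X^c$. By Theorem \ref{Regularity1} one has $\vecu'\in\vecH^2(D)$, hence $\mdiv((1+m)\vecu')\in H^1(D)$ and so $\mdiv((1+m)(\vecu')^c)\in H^1(D)$, and the second observation applies to $(\vecu')^c$. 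Then
$$\vecB_z\big((\vecu')^d,\vecv'\big)=\vecB_z(\vecu',\vecv')-\vecB_z\big((\vecu')^c,0\big)=(\vecu,\vecv)-(-z\,(\vecu')^c,0)=\big(\vecu+z\,(\vecu')^c,\,\vecv\big),$$
and applying $\vecP^d$ kills the $X^c$ contribution in the first slot, yielding $\vecP^d\vecB_z\vecP^d\vecR_z(\vecu,\vecv)=\vecP^d\vecB_z\big((\vecu')^d,\vecv'\big)=(\vecu,\vecv)$.

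I expect the only delicate point to be the bookkeeping of function spaces: one must check that the Helmholtz components $\vecf^c$ and $(\vecu')^c$ are regular enough (in $\vecH^2(D)$, equivalently in $\vecU(D)$) for $\vecB_z$ and $\vecR_z$ to be applied to them — this is where the regularity estimates of Theorem \ref{Regularity1} and the $\vecH^2(D)$‑regularity of \cite{ABDG} enter — and that the decomposition $\vecL^2(D)=X^d\oplus X^c$ is direct, which uses $\Re(1+m)>0$. Once these are in place, the argument is purely algebraic.
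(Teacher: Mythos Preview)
Your proof is correct and follows essentially the same approach as the paper: both arguments exploit that $\vecB_z$ acts on the curl-free, tangentially-vanishing component as multiplication by $-z$, so that after Helmholtz-decomposing the intermediate quantity one recovers the original pair upon applying $\vecP^d$. Your treatment is in fact slightly more careful than the paper's in tracking the regularity needed to apply $\vecB_z$ (resp.\ $\vecR_z$) to the Helmholtz components $\vecf^c$ and $(\vecu')^c$; note that invoking Theorem~\ref{Regularity1} for $\vecu'\in\vecH^2(D)$ is unnecessary since $\vecu'\in\vecU(D)\subset\vecH^2(D)$ already.
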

\begin{proof}
On one hand, for any $(\vecf^d, \vecg) \in \vecH(D)$, let $(\vecu, \vecv)=\vecR_z (\vecf^d, \vecg)$, then
\begin{eqnarray*}
&\mcurl \mcurl \vecu-z (1+m) \vecu - m \vecv= (1+m)\vecf^d \quad &\mbox{in} \quad D  \\
&\mcurl \mcurl \vecv-z \vecv = \vecg \quad &\mbox{in} \quad D 
\end{eqnarray*}
Let $(\vecu^d,\vecv)=\vecP^d(\vecu,\vecv)$, then
\begin{eqnarray*}
&\mcurl \mcurl \vecu^d-z(1+m) \vecu^d - m \vecv= (1+m)\vecf^d + z (1+m) \vecu^c \quad &\mbox{in} \quad D  \\
&\mcurl \mcurl \vecv-z \vecv = \vecg \quad &\mbox{in} \quad D 
\end{eqnarray*}
This implies that
$$
\vecP^d \vecB_z \vecP^d \vecR_z (\vecf^d, \vecg)  = \vecP^d \vecB_z \vecP^d (\vecu,\vecv) =\vecP^d \vecB_z  (\vecu^d,\vecv) = \vecP^d (\vecf^d + z \vecu^c,\vecg)=(\vecf^d, \vecg).
$$

On the other hand, for any $(\vecu^d, \vecv) \in \vecH(D)$, let $(\vecf, \vecg)=\vecB_z (\vecu^d, \vecv)$, then
\begin{eqnarray*}
&\mcurl \mcurl \vecu^d-z (1+m) \vecu^d - m \vecv= (1+m)\vecf \quad &\mbox{in} \quad D  \\
&\mcurl \mcurl \vecv-z \vecv = \vecg \quad &\mbox{in} \quad D 
\end{eqnarray*}
This implies that
\begin{eqnarray*}
&\mcurl \mcurl (\vecu^d + \frac{1}{z} \vecf^c)-z (1+m) (\vecu^d+ \frac{1}{z} \vecf^c) - m \vecv= (1+m)\vecf^d \quad &\mbox{in} \quad D  \\
&\mcurl \mcurl \vecv-z \vecv = \vecg \quad &\mbox{in} \quad D 
\end{eqnarray*}
Therefore
$$
\vecP^d \vecR_z \vecP^d \vecB_z (\vecu^d, \vecv) = \vecP^d \vecR_z (\vecf^d, \vecg) = \vecP^d (\vecu^d + \frac{1}{z} \vecf^c, \vecv) = (\vecu^d, \vecv).
$$
Hence we have proved the lemma.
\end{proof} \proofend

We now have the following expression for $(\vecS_z)_\lambda$.

\begin{lemma} \label{SzLambda} 
Let $\lambda \in \C$ and 
assume that $\vecR_{z+\lambda} = \vecB_{z+\lambda}^{-1} $ is well defined. Then $(\vecS_z)_\lambda = \vecP^d \vecR_{z+\lambda}$.
\end{lemma}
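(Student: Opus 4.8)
The plan is to verify the identity $(\vecS_z)_\lambda = \vecP^d \vecR_{z+\lambda}$ by unravelling the definition $(\vecS_z)_\lambda = \vecS_z(\mathbf{I}-\lambda \vecS_z)^{-1}$ on $\vecH(D)$ and exploiting the resolvent-type relation between $\vecB_z$ and $\vecB_{z+\lambda}$. The crucial algebraic fact is that $\vecB_{z+\lambda}$ and $\vecB_z$ differ only by a multiplication term: for $(\vecu,\vecv)\in\vecU(D)\times\vecV(D)$ one has $\vecB_{z+\lambda}(\vecu,\vecv) = \vecB_z(\vecu,\vecv) - \lambda(\vecu,\vecv)$, since subtracting $\lambda(1+m)\vecu$ on the left of \eqref{FormulationEu} corresponds to subtracting $\lambda\vecu$ on the right-hand side after dividing by $(1+m)$, and subtracting $\lambda\vecv$ in \eqref{FormulationEv} is immediate. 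Thus, on the domain in question, $\vecB_{z+\lambda} = \vecB_z - \lambda\mathbf{I}$.

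First I would fix $(\vecf^d,\vecg)\in\vecH(D)$ and set $(\vecu^d,\vecv) := \vecP^d\vecR_{z+\lambda}(\vecf^d,\vecg)$, so that $\vecR_{z+\lambda}(\vecf^d,\vecg)$ solves \eqref{FormulationEu}--\eqref{FormulationEv} with parameter $z+\lambda$ and right-hand side $(\vecf^d,\vecg)$. Applying $\vecP^d$ and using the computation from the proof of Lemma \ref{InverseSz} (the curl-curl equation is invariant under adding a gradient-type $\vecu^c$ term up to a $z+\lambda$ multiple), I would show that $(\vecu^d,\vecv)$ satisfies $\vecP^d\vecB_{z+\lambda}\vecP^d$ applied to it equals $(\vecf^d,\vecg)$, i.e. $\vecP^d\vecB_{z+\lambda}\vecP^d$ is a left/right inverse of $\vecP^d\vecR_{z+\lambda}$ on $\vecH(D)$, exactly as in Lemma \ref{InverseSz} with $z$ replaced by $z+\lambda$. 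Then using $\vecB_{z+\lambda} = \vecB_z - \lambda\mathbf{I}$ and the fact that $\vecP^d$ is a projection that commutes with the multiplication by $\lambda$, I would rewrite $\vecP^d\vecB_{z+\lambda}\vecP^d = \vecP^d\vecB_z\vecP^d - \lambda\mathbf{I}$ on $\vecH(D)$. Combining this with $\vecP^d\vecB_z\vecP^d\vecR_z = \mathbf{I}$ and $\vecP^d\vecR_z\vecP^d\vecB_z = \mathbf{I}$ from Lemma \ref{InverseSz}, I would deduce $(\vecP^d\vecB_z\vecP^d - \lambda\mathbf{I})(\vecP^d\vecR_{z+\lambda}) = \mathbf{I}$, hence $\vecP^d\vecR_{z+\lambda} = (\vecP^d\vecB_z\vecP^d - \lambda\mathbf{I})^{-1}$.

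Finally I would identify the right-hand side with $(\vecS_z)_\lambda$: since $\vecS_z = \vecP^d\vecR_z$ and $(\vecP^d\vecB_z\vecP^d)^{-1} = \vecS_z$ on $\vecH(D)$ by Lemma \ref{InverseSz}, we have
$(\vecP^d\vecB_z\vecP^d - \lambda\mathbf{I})^{-1} = \bigl((\mathbf{I} - \lambda\vecS_z)(\vecP^d\vecB_z\vecP^d)\bigr)^{-1} = \vecS_z(\mathbf{I}-\lambda\vecS_z)^{-1} = (\vecS_z)_\lambda$,
which is precisely the claimed identity; in particular this also shows $\lambda^{-1}$ is in the resolvent set of $\vecS_z$ whenever $\vecR_{z+\lambda}$ is well defined, so the expression $(\vecS_z)_\lambda$ makes sense.

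**Expected main obstacle.** The routine parts are the symbol manipulations; the one point requiring care is the bookkeeping with the projection $\vecP^d$ — namely checking that $\vecP^d\vecB_{z+\lambda}\vecP^d$ really is the inverse of $\vecP^d\vecR_{z+\lambda}$ on $\vecH(D)$ (the $\vecu^c$-correction terms must cancel after applying $\vecP^d$, exactly as in Lemma \ref{InverseSz}), and that the identity $\vecB_{z+\lambda} = \vecB_z - \lambda\mathbf{I}$ survives conjugation by $\vecP^d$ on the subspace $\vecH(D)$ where the divergence constraints hold. This is where I would spend the most effort.
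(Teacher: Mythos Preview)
Your proposal is correct and follows essentially the same approach as the paper: both arguments rest on Lemma~\ref{InverseSz} (and its analog with $z$ replaced by $z+\lambda$) together with the identity $\vecB_{z+\lambda}=\vecB_z-\lambda\mathbf{I}$, and then conclude by the same algebraic manipulation relating $(\vecS_z)_\lambda$ to $(\vecP^d\vecB_z\vecP^d-\lambda\mathbf{I})^{-1}$. The only cosmetic difference is direction: the paper writes a forward chain of equalities starting from $(\vecS_z)_\lambda=\vecP^d\vecR_z(\mathbf{I}-\lambda\vecP^d\vecR_z)^{-1}$ and arriving at $\vecP^d\vecR_{z+\lambda}$, whereas you start from $\vecP^d\vecR_{z+\lambda}$ and work back.
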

\begin{proof}
By definition, $(\vecS_z)_\lambda = \vecP^d \vecR_z (\mathbf{I}-\lambda \vecP^d
\vecR_z)^{-1}$. From Lemma \ref{InverseSz} and the fact that $\vecP^d
\mathbf{I} = \mathbf{I} $ where $\mathbf{I}$ is the identity operator on
$\vecH(D)$, we have that
\begin{eqnarray*}
(\vecS_z)_\lambda 
&=& \vecP^d \vecR_z (\mathbf{I}-\lambda \vecP^d \vecR_z)^{-1} \\
&=& \vecP^d \vecR_z (\vecP^d \vecB_z \vecP^d \vecR_z - \lambda  \vecP^d \vecR_z)^{-1} \\
&=& \vecP^d \vecR_z ((\vecP^d \vecB_z - \lambda \mathbf{I}) \vecP^d \vecR_z  )^{-1} \\
&=& \vecP^d \vecR_z (\vecP^d (\vecB_z - \lambda \mathbf{I}) \vecP^d \vecR_z  )^{-1} \\
&=& \vecP^d \vecR_z (\vecP^d \vecB_{z+\lambda}  \vecP^d \vecR_z  )^{-1} \\
&=&  \vecP^d \vecR_{z+\lambda}.
\end{eqnarray*}
where for the last equality we used that $\vecP^d \vecR_{z+\lambda}\vecP^d
\vecB_{z+\lambda} = \mathbf{I}$.
\end{proof} \proofend

We are now in position to prove the following result on the spectral
decomposition of  $\vecS_z$.
\begin{theorem} \label{TheoremPdRLambdaPd}
Assume that Assumption 1 holds and assume that $C(m)$ is contained in an interval of
length $< \frac{\pi}{4}$.  Then there are infinitely many eigenvalues of
$\vecS_z$ and the associated generalized eigenfunctions are dense in $\{\vecu \in \vecU(D); \mdiv \left( \left(1+m\right)\vecu\right)=0\} \times \{\vecv \in \vecV(D); \mdiv  \vecv =0\}$.
\end{theorem}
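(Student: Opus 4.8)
The plan is to apply Lemma~\ref{SpectralLemma} with the Hilbert space $H=\vecH(D)$, the operator $S=\vecS_z$, and exponent $p=2$, for which the critical angle is $\pi/(2p)=\pi/4$; here $z$ is fixed as in Theorem~\ref{InverseBLambda}, whose hypothesis holds because $C(m)\cup\{0\}\cup\{\arg((n(x)+1)/n(x));\,x\in\Gamma\}$, being contained in a set of total ``length'' $<\pi/4$, cannot equal $[0,2\pi[$. The first ingredient, that $\vecS_z^2$ is Hilbert--Schmidt on $\vecH(D)$, has already been established above via Theorem~\ref{Regularity2} and \cite[Lemma~4.1]{R1}. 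So the real work is to produce $N$ rays of bounded growth for $\vecS_z$, of the form $\{re^{i\theta_k}:r\ge r_0\}$ for $k=1,\dots,N$, with $\theta_k-\theta_{k-1}<\pi/4$ and $2\pi-\theta_N+\theta_1<\pi/4$. Once this is in place, Lemma~\ref{SpectralLemma} yields that the span of the nonzero generalized eigenfunctions of $\vecS_z$ is dense in the closure of the range of $\vecS_z^2$, and it remains to identify that closure.

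For the rays I would start from Lemma~\ref{SzLambda}: $(\vecS_z)_\lambda=\vecP^d\vecR_{z+\lambda}$ whenever $\vecR_{z+\lambda}=\vecB_{z+\lambda}^{-1}$ is well defined, which by Lemma~\ref{assumCone} and Theorem~\ref{InverseBLambda} is the case as soon as $|z+\lambda|$ is large and $\arg(z+\lambda)\notin B:=C(m)\cup\{0\}\cup\{\arg((n(x)+1)/n(x));\,x\in\Gamma\}$. Moreover the estimates of Theorem~\ref{Regularity1}, in the rescaled form (\ref{InverseBLambdaEstimatev})--(\ref{InverseBLambdaEstimateDivv}) with $|z|$ replaced by $|z+\lambda|$, are uniform and give $\|\vecR_{z+\lambda}\|_{\vecF(D)\times\vecG(D)\to\vecU(D)\times\vecV(D)}\lesssim 1$ for all such $\lambda$; arguing exactly as for the boundedness of $\vecS_z$ itself (boundedness of $\vecP^d$ from $\vecU(D)\times\vecV(D)$ into $\vecH(D)$, together with the fact that the $\vecH^2(D)\times\vecL^2(D)$ norm is equivalent on $\vecH(D)$), this yields $\|(\vecS_z)_\lambda\|_{\vecH(D)\to\vecH(D)}\lesssim 1$ uniformly over such $\lambda$. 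Now the hypotheses enter: by Assumption~\ref{Assumptionmconstant} the set $\{\arg((n(x)+1)/n(x));\,x\in\Gamma\}$ reduces to a single point, and $C(m)$ lies in an interval of length $<\pi/4$, so $B$ is contained in the union of one interval of length $<\pi/4$ and at most two points. One may therefore choose $0\le\theta_1<\dots<\theta_N<2\pi$ in the complement of $B$, each at a fixed positive distance from $B$, with all cyclic angular gaps $<\pi/4$: the only gap that could approach $\pi/4$ is the one straddling the interval containing $C(m)$, and it stays $<\pi/4$, while gaps straddling the isolated points of $B$ can be made arbitrarily small. For each such $\theta_k$, since $z$ is fixed we have $\arg(z+re^{i\theta_k})\to\theta_k$ and $|z+re^{i\theta_k}|\to\infty$ as $r\to\infty$, hence for $r\ge r_0$ large enough $\arg(z+re^{i\theta_k})\notin B$ with $|z+re^{i\theta_k}|$ large, so by the previous step $\sup_{r\ge r_0}\|(\vecS_z)_{re^{i\theta_k}}\|_{\vecH(D)\to\vecH(D)}\lesssim 1$; the rays have bounded growth.

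It then remains to identify the closure of the range of $\vecS_z^2$. By the left-inverse identity $\vecP^d\vecB_z\vecP^d\vecR_z=\mathbf{I}$ of Lemma~\ref{InverseSz}, $\vecS_z$ is injective on $\vecH(D)$, hence so is $\vecS_z^2$ and its range is infinite dimensional; and $\vecS_z$ has dense range in $\vecH(D)$, which follows from the companion identity $\vecP^d\vecR_z\vecP^d\vecB_z=\mathbf{I}$ (its range contains the dense class of $x\in\vecH(D)$ with $\vecP^d\vecB_z x\in\vecH(D)$, obtained by interior elliptic regularity and the constancy of $m$ near $\Gamma$), or equivalently by repeating the Helmholtz-decomposition density argument from the proof of Theorem~\ref{InverseBLambda}. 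Since $\vecS_z$ is continuous, $\overline{\mathrm{Range}(\vecS_z^2)}\supseteq\overline{\mathrm{Range}(\vecS_z)}=\vecH(D)$, so that closure is all of $\vecH(D)$. Every generalized eigenfunction of $\vecS_z$ for a nonzero eigenvalue lies in the range of $\vecS_z$, hence, being of the form $\vecP^d\vecR_z(\cdot)$, in $\{\vecu\in\vecU(D);\,\mdiv((1+m)\vecu)=0\}\times\{\vecv\in\vecV(D);\,\mdiv\vecv=0\}$; since this space sits inside $\vecH(D)$ and the generalized eigenfunctions are dense in $\vecH(D)$, they are in particular dense in it. Finally, if $\vecS_z$ had only finitely many eigenvalues, then --- the nonzero eigenvalues having finite algebraic multiplicity since $\vecS_z^2$ is compact --- the generalized eigenspaces would span a finite-dimensional subspace, contradicting density in the infinite-dimensional $\vecH(D)$; hence there are infinitely many.

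The hard part will be the second paragraph: one must simultaneously ensure that the $\theta_k$ can be placed away from $B$ with gaps below the threshold $\pi/4$ forced by $p=2$ --- which is precisely where the constancy of $n$ near $\Gamma$ and the assumption that $C(m)$ lies in an interval of length $<\pi/4$ are used --- and that the resolvent bound $\|(\vecS_z)_{re^{i\theta_k}}\|\lesssim 1$ is genuinely uniform in $r\ge r_0$, i.e.\ that Theorem~\ref{Regularity1} applies with constants independent of $|z+\lambda|$ along the ray and that $\arg(z+re^{i\theta_k})$ stays admissible for all large $r$ and not merely in the limit.
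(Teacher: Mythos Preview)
Your treatment of the rays of bounded growth (the second paragraph) is essentially the paper's Step~1: you use Lemma~\ref{SzLambda} to write $(\vecS_z)_{re^{i\theta_k}}=\vecP^d\vecR_{z+re^{i\theta_k}}$, choose the $\theta_k$ outside the forbidden set $B=C(m)\cup\{0\}\cup\{\arg((n+1)/n)\}$ with gaps $<\pi/4$, and invoke the uniform estimates (\ref{InverseBLambdaEstimatev})--(\ref{InverseBLambdaEstimateDivv}) together with $\arg(z+re^{i\theta_k})\to\theta_k$. That part is fine, and your remark that constancy of $n$ near $\Gamma$ collapses the last piece of $B$ to a single point is exactly what the paper uses.

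The gap is in your third paragraph, when you identify the closure of the range of $\vecS_z^2$. You argue, via the identity $\vecP^d\vecR_z\vecP^d\vecB_z=\mathbf{I}$, that $\vecS_z$ has dense range in $\vecH(D)$, and then conclude density in the target space $\{\vecu\in\vecU(D);\,\mdiv((1+m)\vecu)=0\}\times\{\vecv\in\vecV(D);\,\mdiv\vecv=0\}$ simply because ``this space sits inside $\vecH(D)$''. But the second factor of $\vecH(D)$ carries only the $\vecL^2$ norm, whereas the target space carries the $\vecV(D)$ graph norm, which controls $\mcurl\mcurl\vecv$ as well. Density in the weaker $\vecH(D)$ topology does \emph{not} imply density in the strictly stronger target-space topology; the implication goes the wrong way. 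So from Lemma~\ref{SpectralLemma} you only get that the generalized eigenfunctions are dense in $\overline{\mathrm{Range}(\vecS_z^2)}$ taken in $\vecH(D)$, and your argument stops short of the theorem's conclusion.

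The paper closes this gap by a genuinely different construction: given $(\vecu,\vecv)$ in the target space, it introduces $p\in H^1_0(D)$ solving $-z\,\mdiv[(1+m)\nabla p]=\nabla m\cdot\vecv$, sets $\vecu^*=\vecu+\nabla p$, and checks that $(\vecu^*,\vecv)\in\vecU(D)\times\vecV(D)$ with $\vecP^d(\vecu^*,\vecv)=(\vecu,\vecv)$ and $\mdiv[(1+m)\vecB_z(\vecu^*,\vecv)_1]=0$, $\mdiv[\vecB_z(\vecu^*,\vecv)_2]=0$. One then approximates $(\vecf,\vecg)=\vecB_z(\vecu^*,\vecv)$ in $\vecF(D)\times\vecG(D)$ and uses the boundedness of $\vecR_z:\vecF(D)\times\vecG(D)\to\vecU(D)\times\vecV(D)$ to get $\vecS_z(\vecf_\ell,\vecg_\ell)\to(\vecu,\vecv)$ in $\vecU(D)\times\vecV(D)$---hence in the correct, stronger topology. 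This explicit preimage via the auxiliary potential $p$ is the step your algebraic approach is missing; the identity $\vecP^d\vecR_z\vecP^d\vecB_z=\mathbf{I}$ alone does not produce it, because $\vecP^d\vecB_z(\vecu,\vecv)$ generally fails to lie in $\vecH(D)$ (its first component need not be in $\vecU(D)$), and even when it does, you have no control of the image in the $\vecV(D)$ norm.
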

\begin{proof} We prove the theorem in two steps.

\medskip

\textit{Step 1}. We shall apply Lemma \ref{SpectralLemma} with $S=\vecS_z$, $H=\vecH(D)$ and $p=2$. Since $C(m)$ is contained  in an interval of
length $< \frac{\pi}{4}$, then we can choose $0\le \theta_1 < \theta_2 <\cdots <\theta_N < 2\pi$ such that (recall that since $n$ is a constant on $\Gamma$, then $\{ \arg \left( \frac{n(x)+1}{n(x)}  \right) ; \; x
\in \Gamma\}$ is a fixed angle)
$$
\theta_k-\theta_{k-1}<\frac{\pi}{4}
$$ for $k=2,\cdots,N$ and $2\pi-\theta_N+\theta_1<\frac{\pi}{4}$ 
satisfying
$$
\theta_j \not \in C(m)\cup \{0\} \cup \{ \arg \left( \frac{n(x)+1}{n(x)}  \right) ; \; x
\in \Gamma\} .
$$
From Lemma \ref{assumCone} and Theorem \ref{InverseBLambda}, $\vecR_{re^{i\theta_k}}$ is well-defined as the bounded inverse of $\vecB_{re^{i\theta_k}}$. Moreover $\vecR_{re^{i\theta_k}}$ is uniformly bounded with respect to $r$ because of the estimates (\ref{InverseBLambdaEstimatev}), (\ref{InverseBLambdaEstimateu}) and (\ref{InverseBLambdaEstimateDivv}). Now for sufficiently large $r>0$, the angle of $z+re^{i\theta_k}$ is sufficiently close to $re^{i\theta_k}$. Therefore $\vecR_{z+re^{i\theta_k}}$ is also uniformly bounded with respect to $r$. Hence there exist $r_0$ such that 
$$
sup_{r\ge r_0}\|\vecR_{z+re^{i\theta_k}}\| \le c .
$$
From Lemma \ref{SzLambda} we have that
$$
S_{re^{i\theta_k}} = (\vecS_z)_{re^{i\theta_k}}=\vecP^d \vecR_{z+re^{i\theta_k}}.
$$
Therefore
$$
sup_{r\ge r_0}\|S_{re^{i\theta_k}}\| \le c .
$$

Now we have found directions $\theta_j$ as required in Lemma \ref{SpectralLemma}
for which the bounded growth conditions are satisfied. 

\medskip

\textit{Step 2}. It only remains to prove that the closure of the
range of  $\vecS_z^2$ is dense in  $\{\vecu \in \vecU: \mdiv \left( \left(1+m\right)\vecu\right)=0\} \times \{\vecv \in \vecV: \mdiv  \vecv =0\}$. By a denseness argument, it is sufficient to show that the
closure of the range of  $\vecS_z$ is $\{\vecu \in \vecU(D): \mdiv \left( \left(1+m\right)\vecu\right)=0\} \times \{\vecv \in \vecV(D): \mdiv  \vecv =0\}$. 
Indeed for  $(\vecu,\vecv) \in \{\vecu \in \vecU(D): \mdiv \left( \left(1+m\right)\vecu\right)=0\} \times \{\vecv \in \vecV(D): \mdiv  \vecv =0\}$, we define $p \in H^1_0(D)$ such that 
\begin{eqnarray*}
-z \mdiv [(1+m) \nabla p] = \nabla m \cdot \vecv 
\end{eqnarray*}
Since $\mcurl \nabla p=0$, $\mdiv \nabla p \in L^2(D)$ and $\nu \times \nabla
p=0$ then $\nabla p \in \vecH^1(D)$ (see for instance \cite{ABDG}), the same
argument yields again $\nabla p \in \vecH^2(D)$  since $\mdiv [(1+m) \nabla p]
\in H^1(D)$ (this come from the fact that $\nabla m$ has compact support in $D$
and $\vecv$ is regular on that support by elliptic regularity).  

\medskip

Let $\vecu^*=\vecu+\nabla p$. Then we have $(\vecu^*, \vecv)\in
\vecU(D) \times \vecV(D)$ and $\vecP^d(\vecu^*,\vecv)=(\vecu,\vecv)$. Moreover by a direct calculation we have that
$$
\mdiv(-z(1+m)\vecu^*-m\vecv)=0.
$$
Now define $(\vecf,\vecg)= \vecB_z (\vecu^*,\vecv)$. Then 
$$
(\vecf,\vecg)  \in \{\vecf \in \vecF(D); \mdiv \left( \left(1+m\right)\vecf\right)=0\} \times \{\vecg \in \vecG(D); \mdiv  \vecg =0\} .
$$
Let $(\vecf_\ell,\vecg_\ell) \in  \vecF(D) \times\vecG(D)$ be a Cauchy sequence such that 
$$
(\vecf_\ell,\vecg_\ell) \to (\vecf,\vecg)
$$ 
in the space $\vecF(D) \times\vecG(D)$. Since $\vecR_z$ is bounded, we have that
\begin{eqnarray*}
\vecR_z(\vecf_\ell,\vecg_\ell)\to \vecR_z(\vecf,\vecg) = (\vecu^*,\vecv)  \quad \mbox{in} \quad \vecU(D) \times \vecV(D).
\end{eqnarray*}
Therefore 
$$
\vecS_z (\vecf_\ell,\vecg_\ell) =\vecP^d \vecR_z (\vecf_\ell,\vecg_\ell) \to \vecP^d(\vecu^*,\vecv)=(\vecu,\vecv)
$$
in $\{\vecu \in \vecU(D); \mdiv \left( \left(1+m\right)\vecu\right)=0\} \times \{\vecv \in \vecV(D); \mdiv  \vecv =0\}$. This proves the theorem.

\end{proof} \proofend 

Now we relate the transmission eigenvalues to the operator $\vecB_z$.
\begin{theorem} \label{FormulationITEBk}
The number $k$ and $(\vecu, \vecv) \in \vecU(D)
\times \{\vecv \in \vecV(D): \mdiv  \vecv =0\}$  are a transmission eigenvalue
and a non trivial solution of (\ref{IntroEu})-(\ref{IntroEv}) respectively if and only if
$\mu^{-1} = k^2 - z$ and $ \vecP^d(\vecu, \vecv)$ are respectively  an eigenvalue and an
eigenvector of $\vecS_z$.
\end{theorem}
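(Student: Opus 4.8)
The plan is to unwind the definitions of $\vecS_z$ and $\vecP^d$ and to transfer the eigenvalue equation through the projection by means of the intertwining identities of Lemma~\ref{InverseSz}, the only genuinely new input being that a curl--free field with vanishing tangential trace is ``transparent'' for $\vecB_z$. First I would record the elementary reformulation: for $(\vecu,\vecv)\in\vecU(D)\times\vecV(D)$ with $\mdiv\vecv=0$, the pair $(\vecu,\vecv)$ solves (\ref{IntroEu})--(\ref{IntroEv}) at wave number $k$ if and only if $\vecB_z(\vecu,\vecv)=(k^2-z)(\vecu,\vecv)$, which is a one--line substitution into (\ref{FormulationEu})--(\ref{FormulationEv}). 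If $(\vecu,\vecv)$ is nontrivial, injectivity of $\vecB_z$ (Theorem~\ref{InverseBLambda}) forces $k^2\neq z$; and since $(k^2-z)(\vecu,\vecv)=\vecB_z(\vecu,\vecv)\in\vecF(D)\times\vecG(D)$ by the range lemma of Section~\ref{Formulation}, we may apply $\vecR_z$ to get $\vecR_z(\vecu,\vecv)=(k^2-z)^{-1}(\vecu,\vecv)$. I would also record that $\vecB_z(\vecp,0)=(-z\vecp,0)$ whenever $\vecp$ is curl--free with $\vecp\times\nu|_\Gamma=0$ (because then $\mcurl\mcurl\vecp=0$), that the Helmholtz $d$--component of such a $\vecp$ vanishes, and hence that $\vecP^d\vecB_z(\vecp,0)=0$ and $\vecP^d\vecR_z(\vecp,0)=0$.

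\textbf{``Only if''.} Let $k$ be a transmission eigenvalue with nontrivial solution $(\vecu,\vecv)$ and decompose $\vecu=\vecu^d+\vecu^c$ as in (\ref{decomposition}). Using $\vecR_z(\vecu,\vecv)=(k^2-z)^{-1}(\vecu,\vecv)$ together with $\vecP^d\vecR_z(\vecu^c,0)=0$ gives $\vecS_z\vecP^d(\vecu,\vecv)=\vecP^d\vecR_z\vecP^d(\vecu,\vecv)=\vecP^d\vecR_z(\vecu,\vecv)=(k^2-z)^{-1}\vecP^d(\vecu,\vecv)$, so that $\mu:=(k^2-z)^{-1}$, i.e.\ $\mu^{-1}=k^2-z$, is an eigenvalue of $\vecS_z$ with eigenvector $\vecP^d(\vecu,\vecv)=(\vecu^d,\vecv)$; alternatively one may first show $\vecP^d\vecB_z\vecP^d(\vecu^d,\vecv)=(k^2-z)(\vecu^d,\vecv)$ and then apply $\vecP^d\vecR_z\vecP^d\vecB_z=\mathbf{I}$ on $\vecH(D)$. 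This eigenvector is nonzero: if $\vecu^d=0$ and $\vecv=0$ then $\vecu=\vecu^c$ is curl--free and (\ref{IntroEu}) reduces to $k^2(1+m)\vecu^c=0$, which forces $\vecu^c=0$ since $k\neq0$ and $\Re(1+m)>0$, contradicting nontriviality.

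\textbf{``If''.} Conversely, let $\mu$ with $\mu^{-1}=k^2-z$ be an eigenvalue of $\vecS_z$ with eigenvector $(\vecw,\vecv)\in\vecH(D)\setminus\{0\}$. Then $\mu\neq0$ (otherwise $\vecR_z(\vecw,\vecv)$ would be curl--free in its first slot and zero in its second, forcing $(\vecw,\vecv)=0$), and since $\vecP^d$ acts as the identity in the second component while $\vecR_z$ maps into $\vecU(D)\times\vecV(D)$, one gets $\vecv\in\vecV(D)$, so that $\vecB_z(\vecw,\vecv)$ is defined. The identity $\vecP^d\vecB_z\vecP^d\vecR_z=\mathbf{I}$ on $\vecH(D)$ of Lemma~\ref{InverseSz}, combined with $\vecS_z=\vecP^d\vecR_z$ and $\vecP^d(\vecw,\vecv)=(\vecw,\vecv)$, yields $\vecP^d\vecB_z(\vecw,\vecv)=\mu^{-1}(\vecw,\vecv)=(k^2-z)(\vecw,\vecv)$. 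Writing $(\vecf,\vecg):=\vecB_z(\vecw,\vecv)\in\vecF(D)\times\vecG(D)$, this says $\vecf^d=(k^2-z)\vecw$ and $\vecg=(k^2-z)\vecv$, whence $\mcurl\mcurl\vecv-k^2\vecv=0$ and $\mcurl\mcurl\vecw-k^2(1+m)\vecw-m\vecv=(1+m)\vecf^c$. I would then correct $\vecw$ by a curl--free term, exactly as in Step~2 of the proof of Theorem~\ref{TheoremPdRLambdaPd}: since $\vecf^c\in\vecU(D)$ (by the elliptic regularity for curl--free fields used there) and $\mcurl\mcurl\vecf^c=0$, the field $\vecu:=\vecw+k^{-2}\vecf^c$ makes $(\vecu,\vecv)$ a solution of (\ref{IntroEu})--(\ref{IntroEv}) with $\vecu^d=\vecw$, hence $\vecP^d(\vecu,\vecv)=(\vecw,\vecv)\neq0$; taking $\vecE:=\vecu+k^{-2}\vecv$ and $\vecE_0:=k^{-2}\vecv$ recovers a nontrivial solution of (\ref{IntroE})--(\ref{IntroNeumann}), so $k$ is a transmission eigenvalue.

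\textbf{Main obstacle.} The delicate point is not the algebra above but the function--space bookkeeping forced by the fact that $\vecP^d$ commutes with neither $\vecB_z$ nor $\vecR_z$: one must check that all the auxiliary fields that appear ($\vecu^c$, $\vecu^d$, $\vecf^c$, the corrected $\vecu$, and the component $\vecv$) lie in the precise spaces $\vecU(D)$, $\vecV(D)$, $\vecH(D)$, $\vecF(D)$ demanded by the ``transparency'' statements and by Lemma~\ref{InverseSz}. These verifications rest on interior elliptic regularity together with the compact support of $\nabla m$ (as in the proof of Lemma~\ref{FormulationFunctionalSpace} and in Step~2 of the proof of Theorem~\ref{TheoremPdRLambdaPd}, where \cite{ABDG} is invoked) and on the range and regularity results of Sections~\ref{Formulation} and~\ref{Regularity}; once these are in place, the remainder of the proof is the short computation sketched above.
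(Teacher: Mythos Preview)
Your proof is correct and follows essentially the same idea as the paper: in both directions one passes between a solution of (\ref{IntroEu})--(\ref{IntroEv}) and an eigenvector of $\vecS_z$ by correcting with a curl--free field that is invisible to $\vecP^d$. The presentations differ only in how the correction is parametrized. The paper, in the ``if'' direction, applies $\vecR_z$ to the eigenvector, sets $(\tilde\vecu,\tilde\vecv):=\mu\,\vecR_z(\vecu^d,\vecv)$ (paper's $\mu=k^2-z$), checks $\tilde\vecu^d=\vecu^d$, $\tilde\vecv=\vecv$, and then takes $\vecu:=\vecu^d+\tfrac{z}{\mu+z}\tilde\vecu^c$. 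You instead apply $\vecB_z$ to the eigenvector via Lemma~\ref{InverseSz}, isolate $\vecf^c$, and take $\vecu:=\vecw+k^{-2}\vecf^c$. These are the same object: from $\vecB_z(\tilde\vecu,\vecv)=\mu(\vecu^d,\vecv)$ and $\vecB_z(\tilde\vecu^c,0)=(-z\tilde\vecu^c,0)$ one reads off $\vecf^c=z\tilde\vecu^c$, so $k^{-2}\vecf^c=\tfrac{z}{\mu+z}\tilde\vecu^c$. Your route has the small advantage of making the use of Lemma~\ref{InverseSz} explicit and of giving a clean argument for the ``only if'' direction (which the paper simply declares ``easily seen by reversing the above arguments''); the paper's route avoids having to check that $\vecB_z$ is applicable to $(\vecw,\vecv)$, i.e.\ that $\vecv\in\vecV(D)$, which you do handle. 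Both share the same tacit exclusion of $k=0$.
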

\begin{proof}
First we show that for each eigenvalue $\mu^{-1}$ of $\vecS_z$ we can find a transmission eigenvalue $k$ and non trivial solution of (\ref{IntroEu})-(\ref{IntroEv}). Indeed, suppose $(\vecu^d,\vecv) \in \vecH(D)$ is such that
\begin{eqnarray} \label{FormulationPdEigenfunction}
\vecP^d \vecB^{-1}_z  (\vecu^d,\vecv) = \mu^{-1} (\vecu^d,\vecv) .
\end{eqnarray}
Since $\vecB^{-1}_z$ is well-defined, $(\tilde{\vecu},\tilde{\vecv}):=\mu \vecB_z^{-1} (\vecu^d,\vecv) $ satisfies
\begin{eqnarray*}
&\mcurl \mcurl \tilde{\vecu}- z (1+m) \tilde{\vecu}- m \vecv= \mu (1+m)\vecu^d \quad &\mbox{in} \quad D  \label{FormulationTildeEu}  \\
&\mcurl \mcurl \tilde{\vecv}- z \tilde{\vecv} = \mu \vecv \quad &\mbox{in} \quad D.
\end{eqnarray*}
Define $\tilde{\vecu}^d$ such that $(\tilde{\vecu}^d,\tilde\vecv) = \vecP^d(
\tilde{\vecu},\tilde\vecv) $. Then, equation (\ref{FormulationPdEigenfunction}) yields 
$$
\tilde{\vecu}^d = \vecu^d, \quad \quad \tilde{\vecv}=\vecv .
$$
Now set 
\begin{eqnarray*}
\vecu= \tilde{\vecu}^d + \frac{z}{\mu+z}\tilde{\vecu}^c= {\vecu}^d +
\frac{z}{\mu+z}\tilde{\vecu}^c, 
\end{eqnarray*}
where $\tilde{\vecu}^c = \tilde{\vecu} -  \tilde{\vecu}^d$. Then a direct calculation yields
\begin{eqnarray*}
&\mcurl \mcurl \vecu- (z+\mu) (1+m) \vecu- m \vecv= 0 \quad &\mbox{in} \quad D  \ \\
&\mcurl \mcurl \vecv- (z+\mu) \vecv = 0 \quad &\mbox{in} \quad D .
\end{eqnarray*}
The definition of $\vecu$ and \eqref{decomposition} ensures that
$\gamma_t\vecu=0$ and $\gamma_t \mcurl \vecu=0$ on $\Gamma$ and that
$(\vecu,\vecv)$ are non trivial solutions of
(\ref{IntroEu})-(\ref{IntroEv}) with $k:=\sqrt{z+\mu}$ (with appropriate branch). 

The converse is easily seen by reversing the above arguments and defining
$(\vecu^d,\vecv)= \vecP^d(\vecu, \vecv)$. This completes the proof.
\end{proof} \proofend \\

{\bf \noindent Proof of Theorem \ref{mainHM}: }Note that since $\vecS_z^2$ is a Hilbert-Schmidt operator then the reciprocal
of the eigenvalues form a discrete set without finite accumulation points. We
therefore can summarize the results on transmission eigenvalues in Theorem \ref{mainHM}. \proofend
%%%%%%
\subsection{Discussion} The assumption  that the refraction index $n$ is constant near the
boundary (Assumption \ref{Assumptionmconstant}) is in fact only needed  in
Section \ref{Regularity} to establish desired regularity results. The
arguments of Section \ref{InverseBz} and Section \ref{Main} are still valid for non constant
$n$ if the regularity result holds. Relaxing Assumption \ref{Assumptionmconstant} is part of an ongoing
project where we think that the (simpler) approach in \cite{S} would be feasible.

%%%%%%%%%%%%%%%%%%%%%%%%%%%%%%%%%%%%%%%%%%%%%%%%%%%%%%%%%%%
\section{Appendix}
\appendix
We introduce a small parameter $h$. We define $D^h_{x_j}=\frac{h}{i}\frac{\partial}{\partial x_j}$. Similar notations hold for $\nabla_h, \frac{\partial_h}{\partial \nu}$. For an open bounded manifold $D$ in $\R^3$ we introduce the semiclassical Sobolev spaces $\scH{s}(D)$ equipped with the norm $\|\cdot\|_{\scH{s}(D)}$, where $\|\vecu\|_{\scH{s}(D)}:= \mbox{inf}\{ \|\tilde{\vecu}\|_{\vecH_{sc}^s(\R^3)}, \tilde{\vecu}|_D = \vecu\}$ and $\|\vecu\|_{\vecH_{sc}^s(\R^3)}^2:= \int_{\R^3} (1+h^2|\xi|^2)^{s} |\hat{\vecu}(\xi)|^2 d\xi$. For a two dimensional manifold $\Gamma$, we denote the semiclassical norm as $|\cdot|_{\vecH_{sc}^{s}(\Gamma)}$. We denote the commutator of two semiclassical pseudo-differential operators as $[\cdot,\cdot]$. We refer to \cite{AG} and \cite{Z} for details. By $a \lesssim b$ we mean that $a \le Cb$ for some independent constant $C$.

\medskip
\begin{definition}
Let $a(x,\xi)$ be in $C^\infty(\R^{2d})$, we say $a$ is a symbol of order $m$, denoted as $a \in S^m$, if 
$$
|\partial_x^\alpha \partial_\xi^\beta a(x,\xi)| \le C_{\alpha \beta} \langle \xi \rangle ^{m-|\beta|}
$$
for all $\alpha$ and $\beta$ where $\langle \xi \rangle:=(1+|\xi|^2)^{\frac{1}{2}}$. For $a\in S^m$ we define the semiclassical operator $\mOp_h(a)$ by
$$
\mOp_h(a) u = \frac{1}{(2\pi)^d} \int e^{i x \xi} a(x,h\xi) \hat{u}(\xi) d\xi
$$
and we define the class of such operators as $\mOp_h S^m$.
\end{definition}

In particular we need the following results from \cite{R1}. Let $x=(x',x_n)$ and $\xi=(\xi',\xi_n)$ where $(x,\xi)$ is the local coordinate in the cotangent bundle $T^*(\Gamma \times (0,\epsilon))$ and $(x',\xi')$ is the local coordinate in the cotangent bundle $T^*\Gamma$. 

\medskip

For the case that $-h^2\Delta -\mu$ is elliptic with the symbol $|\xi|^2-\mu \not=0$ for any $\xi$ and $x \in \overline{D}$, we have in the tubular neighborhood of $\Gamma$ the semiclassical symbol of 
$$
-h^2\Delta -\mu
$$ 
is 
$$
\xi_n^2 +2hH(x)\frac{1}{i}\xi_n+ R(x,\xi')-\mu
$$
where $H(x)$ is a smooth function depending on $x$. We denote by 
\begin{eqnarray} \label{PreliminaryResultsPrincipleSemiclassicalSymbol}
R_0(x,\xi') 
\end{eqnarray}
the principal semiclassical symbol of $R(x,\xi')$. Moreover we can have
\begin{eqnarray} \label{PreliminaryResultsPolynomial1}
\xi_n^2 + R(x,\xi')-\mu = (\xi_n - \rho_1(x,\xi'))(\xi_n-\rho_2(x,\xi'))
\end{eqnarray}
where $\rho_1$ and $\rho_2$ are symbols of order 1 with $\Im(\rho_1)>0$ and $\Im(\rho_2)<0$.

For the case that $-h^2\Delta -\mu(1+m)$ is elliptic with the symbol $|\xi|^2-\mu(1+m) \not=0$ for any $\xi$ and $x \in \overline{D}$ we have similarly
\begin{eqnarray} \label{PreliminaryResultsPolynomial2}
\xi_n^2 + R(x,\xi')-\mu(1+m) = (\xi_n - \lambda_1(x,\xi'))(\xi_n-\lambda_2(x,\xi'))
\end{eqnarray}
where $\lambda_1$ and $\lambda_2$ are symbols of order 1 with $\Im(\lambda_1)>0$ and $\Im(\lambda_2)<0$. 

\medskip

Also we will use frequently that if the symbol $|\xi|^2-\mu(1+m) \not= 0$ for all $\xi$ and $x \in \overline{D}$, then the parametrix $Q$ of $-h^2 \Delta -\mu(1+m)$ exists
where
$$
Q \left( -h^2 \Delta -\mu(1+m) \right)=I 
$$
modulo a smoothing operator. The following holds
\begin{eqnarray} \label{PreliminaryResultsEstimateQ}
\|(Q\underline{\vecf})|_D\|_{\scH{s+2}(D)} \lesssim \|\vecf\|_{\scH{s}(D)}
\end{eqnarray}
for any $\vecf \in \scH{s}(D)$ with $s \ge 0$. The same holds true for the parametrix $\tilde{Q}$ of $-h^2 \Delta -\mu$. Also we have
\begin{eqnarray} \label{PreliminaryResultsBounaryToDomain}
\|\left( Q ( {\bf \psi} \otimes (D_s^h)^k \delta_{s=0}) \right) |_D \|_{\scH{s-k+\frac{3}{2}}(D)} \lesssim h^{-\frac{1}{2}} \|{\bf \phi}\|_{\vecH_{sc}^{s}(\Gamma)} 
\end{eqnarray}
where $s-k+\frac{3}{2} \ge 0$.

Moreover if $-h^2 \Delta \vecv - \mu \vecv=h^2\vecg$ in $D$ and $|\xi|^2-\mu\not=0$ then 
\begin{eqnarray} \label{PreliminaryResultsvBoundary}
\| \vecv \|_{\scH{s+1}(D \backslash \overline{N})} \lesssim h\|\vecv\|_{\scH{s}(D)} + h^2\|\vecg\|_{\scH{max\{s-1,0\}}(D)}
\end{eqnarray} 
for $s\ge0$ when the right hand side makes sense.

\medskip

Next we introduce $\mbox{op}(r_M)$ as the semiclassical pseudo-differential operator of order $M$ on $\Gamma$. We have that
\begin{eqnarray} \label{PreliminaryResultsTildeQBoundary1}
\gamma \tilde{Q} [\frac{h}{i}{\bf \psi} \otimes D^h_s \delta_{s=0}]=\mop(\frac{\rho_1}{\rho_1-\rho_2}) {\bf \psi}+ h \mop(r_{-1}) {\bf \psi}
\end{eqnarray}
\begin{eqnarray} \label{PreliminaryResultsTildeQBoundary2}
\gamma \tilde{Q} [\frac{h}{i} {\bf \psi} \otimes \delta_{s=0}]=\mbox{op}(\frac{1}{\rho_1-\rho_2}) {\bf \psi}+ h \mbox{op}(r_{-2}) {\bf \psi}
\end{eqnarray}
\begin{eqnarray} \label{PreliminaryResultsQmTildeQBoundary1}
\gamma Q m \tilde{Q} [\frac{h}{i} {\bf \psi} \otimes \delta_{s=0}]=\mop\left( \frac{m(\rho_2-\rho_1+\lambda_2-\lambda_1)}{(\lambda_1-\lambda_2)(\lambda_1-\rho_2)(\rho_1-\lambda_2)(\rho_1-\rho_2)} \right) {\bf \psi} + h \mop(r_{-4}) {\bf \psi}
\end{eqnarray}
\begin{eqnarray} \label{PreliminaryResultsQmTildeQBoundary2}
\gamma Qm\tilde{Q} [\frac{h}{i} {\bf \psi} \otimes D^h_s \delta_{s=0}]=\mop \left( \frac{m(\rho_2 \lambda_2- \rho_1 \lambda_1)}{(\lambda_1-\lambda_2)(\lambda_1-\rho_2)(\rho_1-\lambda_2)(\rho_1-\rho_2)} \right) {\bf \psi}+ h \mop(r_{-3}) {\bf \psi}
\end{eqnarray}
where ${\bf \psi}$ is a distribution on the boundary.

\medskip

In the framework of semiclassical norms, the trace formula reads
\begin{eqnarray} \label{PrelinimaryResultsTrace}
|\gamma u|_{H_{sc}^{s-\frac{1}{2}}(\Gamma)} \lesssim h^{-\frac{1}{2}} \|u\|_{\overline{H}_{sc}^s(D)}
\end{eqnarray}
for $s>\frac{1}{2}$.

\medskip

Moreover we need the following two lemmas.
\begin{lemma} \label{PreliminaryResultsQNablau}
Assume $u \in H^s(D)$. Then for $s \ge 0$
\begin{eqnarray*}
\|Q \nabla_h \underline{u} \|_{\overline{\vecH}_{sc}^{s+1}(D)} \lesssim \|u\|_{\overline{H}_{sc}^s(D)}.
\end{eqnarray*}
\end{lemma}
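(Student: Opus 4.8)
The plan is to follow the effect of the parametrix $Q$ (of order $-2$, inverting $-h^2\Delta-\mu(1+m)$ modulo smoothing) on the zero extension $\underline{u}$, keeping careful track of the powers of $h$. The one point that needs attention is that $\nabla_h\underline{u}$ is not the zero extension of $\nabla_h u$: differentiating across $\Gamma$ creates a boundary layer. Precisely, in the tubular coordinates of Section \ref{Regularity} one has the jump identity $\nabla_h\underline{u}=\underline{\nabla_h u}+\tfrac{h}{i}\,(\gamma u)\,\nu\otimes\delta_{s=0}$ (the scalar counterpart of the identities used to obtain (\ref{TubularNeighborhoodEquationu})), so I would split $Q\nabla_h\underline{u}=Q(\underline{\nabla_h u})+Q\big(\tfrac{h}{i}(\gamma u)\,\nu\otimes\delta_{s=0}\big)$ and bound the two terms separately.

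First treat the main range $s\ge 1$. The interior term is handled directly by (\ref{PreliminaryResultsEstimateQ}) with datum $\nabla_h u\in\scH{s-1}(D)$ (here $s-1\ge 0$): it gives $\|(Q\underline{\nabla_h u})|_D\|_{\scH{s+1}(D)}\lesssim\|\nabla_h u\|_{\scH{s-1}(D)}\lesssim\|u\|_{\scH{s}(D)}$, the last inequality being the elementary bound $\nabla_h\colon\scH{s}(D)\to\scH{s-1}(D)$ (which on $\R^3$ is $h|\xi|\le(1+h^2|\xi|^2)^{1/2}$ and passes to the restriction spaces). For the boundary layer I would use (\ref{PreliminaryResultsBounaryToDomain}) with $k=0$ and output index chosen so that $\sigma+\tfrac32=s+1$, i.e.\ $\sigma=s-\tfrac12$: with $\psi:=\tfrac{h}{i}(\gamma u)\,\nu$ this reads $\|Q(\psi\otimes\delta_{s=0})|_D\|_{\scH{s+1}(D)}\lesssim h^{-1/2}|\psi|_{\vecH^{s-1/2}_{sc}(\Gamma)}$; since $\nu$ is a smooth multiplier on $\Gamma$ and $\psi$ carries a factor $h$, $|\psi|_{\vecH^{s-1/2}_{sc}(\Gamma)}\lesssim h\,|\gamma u|_{H^{s-1/2}_{sc}(\Gamma)}$, and the semiclassical trace inequality (\ref{PrelinimaryResultsTrace}) gives $|\gamma u|_{H^{s-1/2}_{sc}(\Gamma)}\lesssim h^{-1/2}\|u\|_{\scH{s}(D)}$ (legitimate since $s>\tfrac12$). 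Multiplying the three factors, $h^{-1/2}\cdot h\cdot h^{-1/2}=1$, so the boundary term is also $\lesssim\|u\|_{\scH{s}(D)}$, with no loss in $h$; this exact cancellation is the crux of the estimate.

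For the remaining range $0\le s<1$ a simpler argument suffices. When $0\le s<\tfrac12$ the zero extension $\underline{u}$ is itself a bounded extension of $u$ in the $\scH{s}$-norm, so one does not split at all: $\|Q\nabla_h\underline{u}\|_{\vecH^{s+1}_{sc}(\R^3)}\lesssim\|\nabla_h\underline{u}\|_{\vecH^{s-1}_{sc}(\R^3)}\lesssim\|\underline{u}\|_{\vecH^{s}_{sc}(\R^3)}\lesssim\|u\|_{\scH{s}(D)}$, and then one restricts to $D$. When $\tfrac12\le s<1$ one uses the same splitting as above, estimating the interior piece instead by the boundedness of the zero extension on $\scH{s-1}(D)$ (valid because $s-1>-\tfrac12$) and the boundary piece exactly as before (the endpoint $s=\tfrac12$ costs only a routine $\eps$-loss in the trace or is obtained by interpolation). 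The only genuine obstacle in the whole argument is isolating the boundary-layer term $Q(\tfrac{h}{i}(\gamma u)\,\nu\otimes\delta_{s=0})$ and checking that its three powers of $h$ cancel; everything else is a direct application of the Appendix estimates (\ref{PreliminaryResultsEstimateQ}), (\ref{PreliminaryResultsBounaryToDomain}) and (\ref{PrelinimaryResultsTrace}).
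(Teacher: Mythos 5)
Your proof is correct and follows essentially the same route as the paper's: the jump identity $\nabla_h\underline{u}=\underline{\nabla_h u}+\frac{h}{i}(\gamma u)\,\nu\otimes\delta_{s=0}$, the splitting of $Q\nabla_h\underline{u}$ accordingly, and the combination of (\ref{PreliminaryResultsEstimateQ}), (\ref{PreliminaryResultsBounaryToDomain}) and (\ref{PrelinimaryResultsTrace}) with the cancellation $h^{-1/2}\cdot h\cdot h^{-1/2}=1$ are exactly the paper's argument for $s\ge 1$, and the $s=0$ case via the $L^2$ mapping properties of the order $-1$ operator $Q\nabla_h$ is also the same. The only difference is that you additionally treat the intermediate range $0<s<1$ (via boundedness of the zero extension for $s<\tfrac12$ and a variant of the splitting for $\tfrac12\le s<1$), which the paper's proof silently omits; this is a harmless refinement since the lemma is only invoked at integer values of $s$.
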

\begin{proof}
If $s=0$, then this is a consequence of the mapping properties of semiclassical pseudo-differential operators on $L^2(\R^d)$. Now assume $s \ge 1$. From classical jump relations (c.f. \cite{N})
\begin{eqnarray*}
\nabla_h \underline{u} &=& \underline{\nabla_h u} + (\nu \frac{h}{i}u) \otimes \delta_{s=0}.
\end{eqnarray*}
Then 
\begin{eqnarray*}
\|Q \nabla_h \underline{u} \|_{\overline{\vecH}_{sc}^{s+1}(D)} \lesssim \|Q \underline{\nabla_h u} \|_{\overline{H}_{sc}^{s+1}(D)} + \|Q(\frac{h}{i} \nu u \otimes \delta_{s=0})\|_{\overline{H}_{sc}^{s+1}(D)}.
\end{eqnarray*}
From the estimates (\ref{PreliminaryResultsBounaryToDomain}) and (\ref{PrelinimaryResultsTrace}) we have that
\begin{eqnarray*}
\|Q(\frac{h}{i} \nu u \otimes \delta_{s=0})\|_{\overline{H}_{sc}^{s+1}(D)} \lesssim \|u\|_{\overline{H}_{sc}^s(D)}.
\end{eqnarray*}
Noting that $s \ge 1$, we can proceed to have
\begin{eqnarray*}
\|Q \underline{\nabla_h u} \|_{\overline{H}_{sc}^{s+1}(D)} \lesssim \|\nabla_h u\|_{\overline{H}_{sc}^{s-1}(D)} \lesssim \|u\|_{\overline{H}_{sc}^s(D)}.
\end{eqnarray*}
This completes our proof.
\end{proof} \proofend

\begin{lemma} \label{PreliminaryResultsDivu}
Assume $f \in H^1(D)$ and $f=0$ in the neighborhood $N$ of the boundary $\Gamma$. Then for $f \in \overline{H}_{sc}^s(D)$ and small enough $h$
\begin{eqnarray}
|\gamma Q \nabla_h f |_{\vecH^{s+\frac{3}{2}}(\Gamma)} \lesssim h^{\frac{1}{2}}\|f\|_{\overline{H}_{sc}^s(D)}.
\end{eqnarray}
\end{lemma}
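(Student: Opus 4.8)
The plan is to exploit the hypothesis that $f$ vanishes in a full neighborhood $N$ of $\Gamma$: this decouples the boundary trace of $Q\nabla_h f$ from the support of $f$, and by pseudolocality of the semiclassical parametrix $Q$ the function $Q\nabla_h f$ is, near $\Gamma$, not merely of order $-1$ but smoothing with an arbitrarily large gain in powers of $h$. Extracting one such power and invoking the semiclassical trace inequality produces the claimed bound; the exponent $\tfrac{1}{2}$ is only the particular consequence used in Section \ref{Regularity}.

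First I would reduce the claim to an estimate for a semiclassical pseudo-differential operator with disjointly supported cut-offs. Since $f\equiv0$ on $N$, the zero-extension $\underline f$ satisfies $\|\underline f\|_{\overline{H}_{sc}^{s}(\R^{3})}\lesssim\|f\|_{\overline{H}_{sc}^{s}(D)}$ and, there being no jump across $\Gamma$, $\nabla_h\underline f=\underline{\nabla_h f}$ is supported in $\mathrm{supp}\,f$, a compact subset of $D$ at positive distance from $\Gamma$. Choose $\psi\in C_0^{\infty}(N)$ with $\psi\equiv1$ on a smaller neighborhood $N'$ of $\Gamma$, and $\chi\in C_0^{\infty}(D\setminus N)$ with $\chi\equiv1$ on $\mathrm{supp}\,f$, so that $\mathrm{supp}\,\psi\cap\mathrm{supp}\,\chi=\emptyset$. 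Since $\psi\equiv1$ on $N'$ and $\chi\equiv1$ on $\mathrm{supp}(\nabla_h\underline f)$, we get $\gamma\,Q\nabla_h f=\gamma\bigl((\psi Q\chi)\,\nabla_h\underline f\bigr)$.

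The key step is the pseudolocal estimate. The parametrix $Q$ of $-h^{2}\Delta-\mu(1+m)$ from the Appendix is a semiclassical pseudo-differential operator in $\mOp_h S^{-2}$ modulo an $O(h^{\infty})$ smoothing remainder (cf.\ (\ref{PreliminaryResultsEstimateQ})); hence, the cut-offs having disjoint supports, integration by parts in the oscillatory integral for the Schwartz kernel of $Q$ --- using $|x-y|\ge c_{0}>0$ on $\mathrm{supp}\,\psi\times\mathrm{supp}\,\chi$ --- shows that $\psi Q\chi$ is smoothing with an $O(h^{\infty})$ gain: for all $a,b\in\R$ and $M\ge0$, $\|\psi Q\chi\,w\|_{\overline{H}_{sc}^{a}(\R^{3})}\lesssim h^{M}\,\|w\|_{\overline{H}_{sc}^{b}(\R^{3})}$ (see \cite{Z}, and \cite{R1} for this type of argument in the present setting). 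Applying this with $w=\nabla_h\underline f$, $b=s-1$ (so $\|\nabla_h\underline f\|_{\overline{H}_{sc}^{s-1}(\R^{3})}\lesssim\|\underline f\|_{\overline{H}_{sc}^{s}(\R^{3})}\lesssim\|f\|_{\overline{H}_{sc}^{s}(D)}$), $a=s+2$ and $M=1$ gives $\|(\psi Q\chi)\nabla_h\underline f\|_{\overline{H}_{sc}^{s+2}(D)}\lesssim h\,\|f\|_{\overline{H}_{sc}^{s}(D)}$. The semiclassical trace inequality (\ref{PrelinimaryResultsTrace}) applied with exponent $s+2$ then yields
\[ |\gamma Q\nabla_h f|_{\vecH_{sc}^{s+\frac{3}{2}}(\Gamma)}=\bigl|\gamma\bigl((\psi Q\chi)\nabla_h\underline f\bigr)\bigr|_{\vecH_{sc}^{s+\frac{3}{2}}(\Gamma)}\lesssim h^{-\frac{1}{2}}\,\|(\psi Q\chi)\nabla_h\underline f\|_{\overline{H}_{sc}^{s+2}(D)}\lesssim h^{\frac{1}{2}}\,\|f\|_{\overline{H}_{sc}^{s}(D)}, \]
which is the asserted bound; it implies at once the estimate for the normal trace $\gamma_{N}Q\nabla_h\underline{\mdiv\vecu}$ used in Theorem \ref{Regularity2}, multiplication by the smooth unit normal being bounded on $\vecH_{sc}^{\sigma}(\Gamma)$.

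I expect the only genuinely delicate point to be making the $O(h^{\infty})$ pseudolocal bound for $\psi Q\chi$ rigorous within the calculus of the Appendix: one must verify that the parametrix there is a bona fide semiclassical pseudo-differential operator (so that its Schwartz kernel is an oscillatory integral to which integration by parts applies) and that the $O(h^{\infty})$ smoothing remainder in $Q(-h^{2}\Delta-\mu(1+m))=I$ contributes harmlessly. Granted this, the argument is routine, and in fact any fixed positive power of $h$ could be put on the right-hand side --- the exponent $\tfrac{1}{2}$ is chosen only to match what Section \ref{Regularity} needs.
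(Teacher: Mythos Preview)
Your argument is correct and takes a genuinely different route from the paper's own proof. You invoke the semiclassical pseudolocality of $Q$ directly: since $\psi$ and $\chi$ have disjoint supports and $Q\in\mOp_h S^{-2}$ modulo $O(h^\infty)$, the sandwich $\psi Q\chi$ is $O(h^\infty)$ smoothing, and one power of $h$ together with the trace inequality (\ref{PrelinimaryResultsTrace}) is all that is needed. The paper instead argues at the PDE level: it sets $\vecu$ to be the solution of $-h^2\Delta\vecu-\mu(1+m)\vecu=\nabla_h\underline f$ in $\R^3$, multiplies by a cutoff $\chi$ equal to $1$ near $\Gamma$ with $\chi\nabla_h f=0$, and observes that $\chi\vecu$ then solves the same elliptic equation with right-hand side $-hK_1\vecu$ coming purely from the commutator $[-h^2\Delta,\chi]$; ellipticity gives $\|\chi\vecu\|_{\vecH_{sc}^{s+2}}\lesssim h\|\vecu\|_{\vecH_{sc}^{s+1}}\lesssim h\|f\|_{\overline H_{sc}^s(D)}$, after which the trace inequality concludes. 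Your approach is shorter and more conceptual, and as you note it actually yields an arbitrary $h^M$ gain; the paper's commutator argument is more self-contained in that it does not appeal to the general pseudolocality theorem but only to the parametrix estimate (\ref{PreliminaryResultsEstimateQ}) already recorded in the Appendix. The point you flag as delicate --- that $Q$ is a bona fide element of $\mOp_h S^{-2}$ so that the disjoint-support smoothing estimate applies --- is precisely what the paper's PDE-based argument sidesteps. One small notational quibble: your cutoff $\psi$ should be taken in $C_0^\infty(\R^3)$ supported in a two-sided tubular neighborhood of $\Gamma$ (as the paper's $\chi$ is), not in $C_0^\infty(N)$, since $N\subset D$ and you need $\psi\equiv1$ on $\Gamma$ itself; this does not affect the substance of your argument.
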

\begin{proof}
Note that if $f =0$ in $N$, then $\underline{f} \in H_{sc}^s(\R^3)$ and $\underline{f} \in H^1(\R^3)$. Let $\vecu \in \vecH_{sc}^s(\R^3)$ satisfy 
\begin{eqnarray*}
-h^2\Delta \vecu - \mu (1+m) \vecu = \nabla_h \underline{f}.
\end{eqnarray*}
Then $ \vecu =  Q \nabla_h \underline{f}+hK_{-M}\vecu$ for sufficiently large $M>0$.  Let $\chi \in C_0^\infty (\R^3)$ be supported in $N_\epsilon=\left\{ x: x= y+ s \nu(y), \, y \in \Gamma, \, -\epsilon \le s < \epsilon \right\}$ with sufficiently small $\epsilon>0$ such that $\chi \nabla_h f =0$, and $\chi=1$ on $\Gamma$. Then we have
\begin{eqnarray} \label{PreliminaryResultsDivuEqn1}
\| \chi Q \nabla_h \underline{f} \|_{\vecH_{sc}^{s+2}(\R^3)} \le \| \chi \vecu \|_{\vecH_{sc}^{s+2}(\R^3)} + h\| \vecu \|_{\vecH_{sc}^{s+1}(\R^3)} .
\end{eqnarray}
Since $\chi \nabla_h f=0$ then
\begin{eqnarray*}
-h^2\Delta (\chi \vecu) - \mu (1+m) \chi \vecu = \chi \nabla_h \underline{f} - h K_{1} \vecu = -hK_{1} \vecu
\end{eqnarray*}
where $K_1$ is a differential operator of order $1$. Therefore 
\begin{eqnarray*}
\|\chi \vecu\|_{\vecH_{sc}^{s+2}(\R^3)} \lesssim h \|\vecu\|_{\vecH_{sc}^{s+1}(\R^3)}.
\end{eqnarray*}
Then estimate (\ref{PreliminaryResultsDivuEqn1}) yields
\begin{eqnarray*}
\|\chi Q \nabla_h f \|_{\vecH_{sc}^{s+2}(\R^3)} \lesssim h \|\vecu\|_{\vecH_{sc}^{s+1}(\R^3)}.
\end{eqnarray*}
Recall that $\vecu =  Q \nabla_h \underline{f}+hK_{-M}\vecu$. Then for $h$ small enough
\begin{eqnarray*}
 \|\vecu\|_{\vecH_{sc}^{s+1}(\R^3)} \lesssim  \|f\|_{\overline{H}_{sc}^s(D)},
\end{eqnarray*}
and therefore
\begin{eqnarray*}
\|\chi Q \nabla_h f \|_{\vecH_{sc}^{s+2}(\R^3)} \lesssim h \|f\|_{\overline{H}_{sc}^s(D)}.
\end{eqnarray*}
From the inequality (\ref{PrelinimaryResultsTrace}) we have that
\begin{eqnarray*}
|\gamma Q \nabla_h f |_{\vecH^{s+\frac{3}{2}}(\Gamma)}=|\gamma (\chi Q \nabla_h f) |_{\vecH^{s+\frac{3}{2}}(\Gamma)} \lesssim h^{\frac{1}{2}} \|f\|_{\overline{H}_{sc}^s(D)}.
\end{eqnarray*}
This completes the proof.
\end{proof} \proofend

\section*{\normalsize Acknowledgments}
We thank the anonymous referees for their careful reading of our manuscript which help to improve the readability and quality of the paper. The research of S. Meng  is supported in part by the Chateaubriand STEM
fellowship. S. Meng greatly acknowledges the hospitality of the DeFI Team at
INRIA and Ecole Polytechnique. 

\section*{\normalsize References}

\end{document}